\documentclass[12pt]{amsart}


\usepackage{amsmath}
\usepackage{amsfonts}
\usepackage{amsthm}

\usepackage[cmtip,arrow]{xy}
\usepackage{pb-diagram, pb-xy}
\usepackage{pb-diagram, pb-xy}

\newtheorem{theorem}{Theorem}
\newtheorem{corollary}{Corollary}
\newtheorem{lemma}{Lemma}
\newtheorem{proposition}{Proposition}
\newtheorem{definition}{Definition}



\newcommand{\C}{\mathcal{C}}

\newcommand{\F}{\mathcal{F}}

\renewcommand{\L}{\mathcal{L}}


\setlength{\unitlength}{1mm}
\setlength{\parskip}{2mm}
\parindent=0in


\begin{document}
\title{Gorenstein flat preenvelopes and weakly Ding injective covers}
\author{Alina Iacob}

%
\subjclass[2000]{18G25; 18G35}

\maketitle

\begin{abstract}
 We consider a (left) coherent ring $R$. We prove that
 if the character module of every Ding injective (left) $R$-module is
 Gorenstein flat, then the class of Gorenstein flat (right) $R$-modules,
 $\mathcal{GF}$, is preenveloping. We show that this is the case when every
 injective (left) $R$-module has finite flat dimension. In particular, $\mathcal{GF}$
 is preenveloping over any Ding-Chen ring.\\
 The proofs use the class of weakly Ding injective (left) $R$-modules,
 $w\mathcal{DI}$. We show that, when $w\mathcal{DI}$ is closed under extensions, the
 following statements are equivalent:\\
 1. The character module of every Ding injective left $R$-module is
 a Gorenstein flat right R-module.\\
 2. The class of weakly Ding injective left $R$-modules is closed under
 direct limits.\\
 3. The class of weakly Ding injective modules is covering.\\
 The equivalent statements (1)-(3) imply that $\mathcal{GF}$ is preenveloping.
 \end{abstract}

\medskip\noindent
{\footnotesize\noindent{\bf Key words and phrases.} weakly Ding injective cover, Gorenstein flat preenvelope.}

\section{introduction}

Gorenstein homological algebra is the relative version of homological algebra that replaces the classical projective (injective, flat) resolutions with the Gorenstein projective (Gorenstein injective, Gorenstein flat) ones. But while in classical homological algebra the existence of the projective (injective, flat) resolutions over arbitrary rings is well known, things are a bit different when it comes to Gorenstein homological algebra. The existence of the Gorenstein injective envelopes over arbitrary rings was an open question for quite some time. It was settled in the affirmative by Saroch and Stovicek in [38]. The existence of the Gorenstein injective covers was also an open question. Very recently, we showed (in [27]) that the class of Gorenstein injectives is covering if and only if it is closed under direct limits if and only the ring is noetherian such that the character module of every Gorenstein injective (left) $R$-module is a Gorenstein flat (right) $R$-module.\\

We consider here another open question: When is the class of Gorenstein flat modules, $\mathcal{GF}$, preenveloping?  We recall that a right $R$-module
 $G$ is Gorenstein flat if it is a cycle of an exact complex of flat right $R$-modules $F$, such that $F \otimes I$ is still an exact complex for any injective
 $_RI$. It is known that $\mathcal{GF}$ is a Kaplansky class over any ring. This implies that $\mathcal{GF}$ is preenveloping if and only if it is closed under direct
 products (by [9], Theorem 2.5 and Remark 3). The closure of the class of Gorenstein flat (right) $R$-modules under direct products requires the
 ring to be (left) coherent, so this is the largest class of rings over which one can hope that GF is preenveloping. For this reason, throughout
 the paper, we assume that $R$ is a left coherent ring.\\

Our results use the class of weakly Ding injective modules, $w\mathcal{DI}$, its left orthogonal class, $^\bot w\mathcal{DI}$, as well as the class $(^\bot w\mathcal{DI})^\bot$. The Ding injective modules are the cycles of the exact complexes of injective modules that also stay exact when applying a functor $Hom(A,-)$ with $A$ any FP-injective module.  The weakly Ding injective modules are the cycles of the exact complexes of FP-injective modules that stay exact when applying a functor $Hom(A,-)$ with A any FP-injective module.\\
 Our main result gives a sufficient condition for the existence of the Gorenstein flat preenvelopes. More precisely, we show that if the character module of any Ding injective module is Gorenstein flat, then the class of Gorenstein flat right R-modules is preenveloping. \\
 As already mentioned, the proofs use the class of weakly Ding injective
 modules. We prove first (Proposition 1) that any weakly Ding injective module $M$ is a direct sum, $M = F \oplus D$, where $F$ is FP-injective, and $D$ is Ding injective. We also show (Theorem 3) that $(^\bot w\mathcal{DI},(^\bot \mathcal{DI}) ^\bot)$ is a complete hereditary pair. Proposition 7 shows that the following statements are equivalent:\\
 1. $(^\bot w\mathcal{DI})^\bot)$ is a precovering class.\\
 2. $(^\bot w\mathcal{DI})^\bot)$ is a covering class.\\
 3. $(^\bot w\mathcal{DI})^\bot)$ is closed under direct limits.\\
 Thus the class $(^\bot w\mathcal{DI})^\bot)$ satisfies the Enochs’ conjecture (it is covering if and only if it is closed under direct limits).

 Proposition 12 shows that $(^\bot w\mathcal{DI})^\bot)$ is a covering class when the character module of any Ding injective module is Gorenstein flat. Theorem 5 shows that in this case the class of Gorenstein flat right $R$-modules is closed under direct products, and therefore it is preenveloping. Proposition 13 proves that the character module of any Ding injective left $R$-module is Gorenstein flat over any left coherent ring $R$ such that every injective module has finite flat dimension. An immediate application is that the class of Gorenstein flat right $R$-modules, $\mathcal{GF}$, is preenveloping over any Ding-Chen ring. \\
 Theorem 6 shows that if $R$ is left coherent such that the class of weakly Ding injective left $R$-modules is closed under extensions, then the following statements are equivalent:\\
 1. The character module of every weakly Ding injective $R$-module is a Gorenstein flat right $R$-module.\\
 2. The character module of every Ding injective $R$-module is a Gorenstein flat right $R$-module.\\
 3. $(w\mathcal{DI},\mathcal{GF})$ is a duality pair.\\
 4. The class of weakly Ding injective modules is closed under direct limits.\\
 5. The class of weakly Ding injective modules is covering.\\

 The equivalent statements (1)- (5) imply the following statement:\\
 6. The class of Gorenstein flat right R-modules is preenveloping.

\section{preliminaries}
Throughout the paper $R$ will denote an associative ring with identity. Unless otherwise stated, by \emph{module} we mean \emph{left} $R$-module.

We will denote by $Inj$ the class of all injective modules.
We recall that an $R$-module $M$ is Gorenstein injective if there exists an exact and $Hom(Inj, -)$ exact complex of injective modules\\ $\textbf{I}= \ldots \rightarrow I_1 \rightarrow I_0 \rightarrow I_{-1} \rightarrow \ldots $ such that $M = Ker (I_0 \rightarrow I_{-1})$.\\
We will use the notation $\mathcal{GI}$ for the class of Gorenstein injective modules.\\

Given a class of $R$-modules $\mathcal{F}$, we will denote as usual by $\F^\bot$ the class of all $R$-modules $M$ such that $Ext^1(F,M)=0$ for every $F \in \F$.\\
The left orthogonal class of $\F$, denoted $^\bot \F$, is the class of all $_RN$ such that $Ext^1(N,F)=0$ for every $F \in \F$.\\

 The Ding injective modules were introduced in [5] where they were
 called Gorenstein FP-injective modules. They were later renamed
 Ding injective modules in [19]. We recall that an $R$-module $M$ is Ding injective if there exists an exact and $Hom(\mathcal{FI},-)$ exact complex of injective modules $E = \ldots \rightarrow E_1 \rightarrow E_0 \rightarrow E_{-1} \rightarrow \ldots$  such that
 $M =Ker(E_0\to E_{-1})$.\\
 We will use the notation $\mathcal{DI}$ for the class of Ding injective modules.
 The weakly Ding injective modules were introduced in [46].\\

\begin{definition} A module $G$ is weakly Ding injective if there is an exact
 complex of FP-injective modules $\textbf{E}$ such that $Hom(F,\textbf{E})$ is exact for
 any FP-injective module $F$ and such that $G = Z_0 \textbf{E}$.
\end{definition}

 We use $w\mathcal{DI}$ to denote the class of weakly Ding injective modules. It
 is immediate from the definitions that any Ding injective is weakly Ding
 injective, and that any FP-injective is also a weakly Ding injective
 module.\\

 Given a class of $R$-modules $\mathcal{F}$, we will denote as usual by $\mathcal{F}^\bot$ the class
 of all $R$-modules M such that $Ext^1(F,M) = 0$ for every $M \in \mathcal{F}$.\\
The left orthogonal class of F, denoted $^\bot \mathcal{F}$, is the class of all $_RN$ such
 that $Ext^1(N,F) = 0$ for every $F \in \mathcal{F}$.\\

We also recall the definitions for precovers, covers, and special precovers. \\
\begin{definition}
 Let $\mathcal{C}$ be a class of R-modules. A homomorphism $\phi :
 G \rightarrow M$ is a $\mathcal{C}$-precover of $M$ if $G \in \mathcal{C}$ and if for any module $G' \in \mathcal{C}$ and
 any $\phi' \in  Hom(G',M)$ there exists $u \in Hom(G',G)$ such that $\phi' = \phi u$.\\
 A $\mathcal{C}$-precover $\phi$ is said to be a cover if any $v \in  End_R(G)$ such that
$\phi v =\phi$ is an automorphism of $G$.\\
 A $\mathcal{C}$ precover $\phi$ is said to be special if $Ker(\phi)$ is in the right orthogonal
 class of $\mathcal{C}$ (i.e. if $Ext^1(G',Ker(\phi) = 0$ for all modules $G' \in  \mathcal{C}$).
\end{definition}

 The importance of (pre)covers comes from the fact that they allow
 defining resolutions: if the ring $R$ is such that every $R$-module $M$ has
 a $C$ precover then for every $M$ there exists a $Hom(\mathcal{C},-)$ exact complex
 $\ldots \rightarrow G_1 \rightarrow G_0 \rightarrow M \rightarrow 0$ with $G_0 \rightarrow M$ and $G_i \rightarrow Ker(G_{i-1} \rightarrow G_{i-2})$ being $\mathcal{C}$-precovers. Such a complex is called a $\mathcal{C}$ resolution of $M $; it is unique up to homotopy so it can be used to compute right derived functors of $Hom$.

We also use Gorenstein flat right $R$-modules. They are defined in terms of the tensor product:\\
\begin{definition}
A right $R$-module $G$ is Gorenstein flat if there exists an exact complex of flat right $R$-modules  ${\rm {\bf F }}= \ldots \rightarrow F_1 \rightarrow F_0 \rightarrow F_{-1} \rightarrow \ldots $ such that ${\rm {\bf F }} \otimes I$ is still exact for any injective module $I$, and such that $G = Ker (F_0 \rightarrow F_{-1})$.\\
Such a complex ${\rm {\bf F }}$ is called an \emph{F-totally acyclic complex}.
\end{definition}

We will use $\mathcal{GF}$ to denote the class of Gorenstein flat right $R$-modules.\\

\begin{definition} A Gorenstein flat preenvelope of a right $R$-module $M$ is a homomorphism $\phi : M \rightarrow G$ with $G \in \mathcal{ GF}$ such that for any
 $G' \in \mathcal{ GF}$ and for any $f \in  Hom(M,G')$ there is $u \in  Hom(G,G')$ with
 the property that $f = u \circ \phi$.
\end{definition}

 If R is a ring such that $\mathcal{GF}$ is preenveloping, then every right $R$
module $M$ has a right Gorenstein flat resolution (or a Gorenstein flat
 co-resolution). Such a co-resolution is a complex $0 \rightarrow M \rightarrow G^0 \rightarrow G^1 \rightarrow G^2 \rightarrow \ldots$ with $M \rightarrow G^0$ and $Coker(G_{i-1} \rightarrow G_i) \rightarrow G_{i+1}$ a Gorenstein flat preenvelope. Such a co-resolution is unique up to homotopy, so it can be used to compute derived functors of $Hom$.\\

 Cotorsion pairs will be used throughout the paper, so we recall that a pair $(\mathcal{L}, \mathcal{C})$ is a \emph{cotorsion pair} if $\mathcal{L} ^\bot = \mathcal{C}$ and $^\bot \mathcal{C} = \mathcal{L}$.\\
A cotorsion pair $(\mathcal{L}, \mathcal{C})$ is \emph{complete} if for every $_RM$ there exists exact sequences $ 0 \rightarrow C \rightarrow L \rightarrow M \rightarrow 0$ and $0 \rightarrow M \rightarrow C'\rightarrow L' \rightarrow 0$ with $C$, $C'$ in $\mathcal{C}$ and $L$, $L'$ in $\mathcal{L}$.\\

\begin{definition} (\cite{jrgr}, Definition 1.2.10)
 A cotorsion pair $(\L, \C)$ is called hereditary
if one of the following equivalent statements hold:
\begin{enumerate}
\item $\L$ is resolving, that is, $\L$ is closed under taking kernels of epimorphisms.
\item $\C$ is coresolving, that is, $\C$ is closed under taking cokernels of monomorphisms.
\item $Ext^i (F, C) = 0$ for any $F \in \F$ and $C\in \C$ and $i\geq 1$.
\end{enumerate}
\end{definition}

We will also use duality pairs. They were introduced by Holm and
 Jorgensen in [24]. We recall the definition:\\

 \begin{definition}. Let $\mathcal{C}$ be a class of $R$-modules, and let $\mathcal{L}$ be a class of
 right $R$-modules. The pair $(\mathcal{C},\mathcal{L})$ is said to be a duality pair if:\\
 1. $M \in \mathcal{C}$ if and only if $M^+ \in L$.\\
 2. the class $\mathcal{L}$ is closed under direct summands and under finite direct
 sums.
\end{definition}

 By [24] Theorem 3.10, if  $(\mathcal{C},\mathcal{L})$ is a duality pair, then $\mathcal{C}$ is closed under
 pure submodules, pure extensions, and pure quotients. If, moreover, $\mathcal{C}$ is also closed under direct sums then $\mathcal{C}$ is a covering class.\\

 Some results used throughout the paper:\\

\begin{theorem} ([22], Theorem 44) The pair$ (^\bot \mathcal{DI}, \mathcal{DI})$ is a complete
 hereditary cotorsion pair over any ring $R$ (in fact, this is a perfect
 cotorsion pair, i.e. $^\bot{\mathcal{DI}}$ is covering and $\mathcal{DI}$ is enveloping).
\end{theorem}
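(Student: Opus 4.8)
The plan is to realize $(^{\bot}\mathcal{DI},\mathcal{DI})$ as the cotorsion pair \emph{generated by a set} of modules, obtain completeness from the Eklof--Trlifaj theorem, and then upgrade to the hereditary and perfect properties using general cotorsion-pair theory together with one further closure fact; this follows the pattern of Saroch--Stovicek's treatment of Gorenstein injective modules, carried out relative to the class $\mathcal{FI}$ of FP-injectives (it is the content of [22]). First one records the closure properties of $\mathcal{DI}$: it contains all injectives and is closed under direct products, under extensions, and under direct summands. These follow from the definition and from the facts that $\mathcal{FI}$ contains $Inj$ and is closed under products and extensions --- products and horseshoe constructions applied to the defining totally acyclic complexes of injectives are again of the same type (a product of injectives being injective), and summand-closure is obtained by lifting an idempotent of a cycle $Z_{0}\mathbf{E}$ up the complex $\mathbf{E}$, the $Hom(\mathcal{FI},-)$-exactness supplying the $Ext^{1}$-vanishing needed for the lifts. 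One also needs that $\mathcal{DI}$ is \emph{coresolving} (closed under cokernels of monomorphisms between its members); this is the more delicate closure property, and I would prove it hand in hand with completeness. Coresolvingness then makes any cotorsion pair with $\mathcal{DI}$ on the right hereditary.

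Since $\mathcal{DI}\subseteq(^{\bot}\mathcal{DI})^{\bot}$ is automatic and a special $\mathcal{DI}$-preenvelope $0\to M\to D\to L\to 0$ (with $D\in\mathcal{DI}$, $L\in{}^{\bot}\mathcal{DI}$) splits whenever $M\in(^{\bot}\mathcal{DI})^{\bot}$, forcing such $M$ to be a summand of $D$ and hence Ding injective, everything reduces to showing $\mathcal{DI}=\mathcal{S}^{\bot}$ for a \emph{set} $\mathcal{S}$ of modules. This is the main obstacle: it is a Kaplansky/deconstructibility argument demanding uniform cardinal control on the totally acyclic complexes that witness Ding injectivity. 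One fixes an infinite cardinal $\kappa$ depending only on $R$ and proves that, for every $D\in\mathcal{DI}$ presented as $Z_{0}\mathbf{E}$ for an exact, $Hom(\mathcal{FI},-)$-exact complex $\mathbf{E}$ of injectives and every $X\subseteq D$ with $|X|\le\kappa$, there is a subcomplex $\mathbf{E}'\subseteq\mathbf{E}$ of uniformly bounded size with $X\subseteq Z_{0}\mathbf{E}'$ such that both $\mathbf{E}'$ and $\mathbf{E}/\mathbf{E}'$ again witness Ding injectivity of their degree-$0$ cycles; then $D'=Z_{0}\mathbf{E}'$ is a ``small'' Ding injective submodule of $D$ containing $X$ with $D/D'\in\mathcal{DI}$. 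One builds $\mathbf{E}'$ by a countable back-and-forth followed by a union: in one direction one absorbs ``small'' approximations inside the terms of $\mathbf{E}$ in each degree (here the ambient classes --- injectives and FP-injectives --- must be handled carefully, if necessary replacing ``complex of injectives'' by an equivalent description of $\mathcal{DI}$ through deconstructible classes, exactly as in the Gorenstein injective case over a general ring), and in the other one closes off the $Hom(F,-)$-exactness conditions, which for an acyclic complex amount to $Ext^{1}$-vanishing of the cycles against FP-injectives and can be imposed using a \emph{set} of test data drawn from a deconstruction of $\mathcal{FI}$ and of its syzygies. The resulting Kaplansky property yields $\mathcal{DI}=\mathcal{S}^{\bot}$ with $\mathcal{S}\subseteq{}^{\bot}\mathcal{DI}$ a set, whence Eklof--Trlifaj gives that $(^{\bot}\mathcal{DI},\mathcal{DI})$ is a complete cotorsion pair; running the same analysis on a short exact sequence with Ding injective ends proves $\mathcal{DI}$ coresolving, so the pair is hereditary.

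It remains to deduce perfectness. For this one shows that $^{\bot}\mathcal{DI}$ is closed under direct limits --- the second substantial ingredient, obtained from Saroch-type singular-compactness and Mittag-Leffler arguments transported from the Gorenstein injective setting and using that $\mathcal{S}\subseteq{}^{\bot}\mathcal{DI}$. Once $(^{\bot}\mathcal{DI},\mathcal{DI})$ is a complete cotorsion pair whose left-hand class is closed under direct limits, the classical theorem (Enochs; G\"obel--Trlifaj) that such a pair is perfect gives simultaneously that $^{\bot}\mathcal{DI}$ is covering and that $\mathcal{DI}$ is enveloping, completing the proof. I expect the cardinality bookkeeping in the completeness step --- forcing the witnessing totally acyclic complexes to be uniformly small while preserving all the exactness conditions --- to be the genuinely hard point, with the direct-limit closure needed for perfectness a close second; everything else is routine cotorsion-pair formalism.
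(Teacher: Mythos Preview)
The paper does not give its own proof of this statement: Theorem~1 is quoted from [22] (Gillespie--Iacob, Theorem~44) as a background result in the preliminaries section, with no argument supplied. So there is no ``paper's own proof'' to compare against; you are really being asked to reconstruct the content of the cited reference.

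Your outline is a reasonable reconstruction of how such a theorem is typically established and is broadly in the spirit of the Saroch--\v{S}\v{t}ov\'{\i}\v{c}ek treatment of Gorenstein injectives. The two substantive steps you identify --- cogenerating $\mathcal{DI}$ by a set via a Kaplansky/deconstructibility argument on the witnessing totally acyclic complexes, and then obtaining perfectness from closure of $^{\bot}\mathcal{DI}$ under direct limits --- are indeed where the work lies, and you correctly flag them as the hard points rather than pretending they are routine. One caution: the actual argument in [22] proceeds through the machinery of \emph{duality pairs} and ``generalized Gorenstein modules'' rather than through a direct Kaplansky-style small-object construction on complexes; in that framework one shows that $\mathcal{DI}$ arises as the Gorenstein class associated to a suitable (symmetric) duality pair, and completeness/hereditariness/perfectness then follow from general results about such classes. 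Your route via deconstructibility is a legitimate alternative and is closer to what Saroch--\v{S}\v{t}ov\'{\i}\v{c}ek do in [38], but be aware that the cardinality bookkeeping you allude to --- in particular, controlling the $Hom(\mathcal{FI},-)$-exactness condition while carving out small subcomplexes --- requires a genuine set-theoretic input (a deconstruction of $\mathcal{FI}$ or of its syzygies) that you have only gestured at. As a plan this is fine; as a proof it would need that step written out in full.
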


 \begin{theorem} ([46], Lemma 5.3.9) Let $R$ be a left coherent ring. If
 $M$ is a weakly Ding injective $R$-module, then $M = F \oplus D$  with $F$ an FP-injective module, and with $D \in  \mathcal{FI}^\bot$ .
\end{theorem}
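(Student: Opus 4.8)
The plan is to realize $M$ through a defining complex and then to peel off an FP-injective direct summand by comparing that complex with one built from injectives, via the Ding injective cotorsion pair of Theorem 1. So write $M = Z_0\mathbf{E}$ for an exact complex $\mathbf{E} = (\cdots \to E_1 \to E_0 \to E_{-1} \to \cdots)$ of FP-injective modules with $Hom(F,\mathbf{E})$ exact for every FP-injective $F$. The complex supplies short exact sequences $0 \to M \to E_0 \to M_{-1} \to 0$ and $0 \to M_1 \to E_1 \to M \to 0$ with $E_0,E_1$ FP-injective and $M_{\pm 1}$ again weakly Ding injective, and a routine diagram chase using exactness of $Hom(F,\mathbf{E})$ shows moreover that $E_1 \to M$ is an $\mathcal{FI}$-precover.

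First I would collect the ambient facts over the coherent ring $R$: $\mathcal{FI}$ is closed under extensions, direct summands, direct limits and direct products, and it is coresolving (if $0 \to A \to B \to C \to 0$ is exact with $A,B$ FP-injective then $Ext^1(P,C) \hookrightarrow Ext^2(P,A) = 0$ for every finitely presented $P$, the last vanishing since the first syzygy of $P$ is again finitely presented); in particular every FP-injective embeds in an injective with FP-injective cokernel. Also $\mathcal{DI} \subseteq \mathcal{FI}^\bot$, equivalently $\mathcal{FI} \subseteq {}^\bot\mathcal{DI}$, since a Ding injective module is a cycle of an exact, $Hom(\mathcal{FI},-)$-exact complex of injectives and injectives lie in $\mathcal{FI}^\bot$.

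Next I would produce the comparison. Embedding the terms of $\mathbf{E}$ termwise into injectives, keeping the differentials compatible (equivalently, invoking completeness of the Ding injective cotorsion pair of Theorem 1 lifted to complexes), I expect a short exact sequence of complexes $0 \to \mathbf{E} \to \mathbf{I} \to \mathbf{C} \to 0$ in which $\mathbf{I}$ is an exact, $Hom(\mathcal{FI},-)$-exact complex of injectives and $\mathbf{C}$ is an exact, $Hom(\mathcal{FI},-)$-exact complex of FP-injectives. Passing to degree-zero cycles (which is exact here because $\mathbf{E}$ is exact) gives
\[
0 \to M \to D \to C \to 0, \qquad D := Z_0\mathbf{I} \in \mathcal{DI}, \quad C := Z_0\mathbf{C} \in w\mathcal{DI}.
\]
I would then show $C \in {}^\bot\mathcal{DI}$ (the cokernel of the $\mathcal{DI}$-envelope of $M$ lies in ${}^\bot\mathcal{DI}$ by Wakamatsu's Lemma, $\mathcal{DI}$ being closed under extensions, and the comparison above can be arranged so that $M \to D$ is an envelope), and that $w\mathcal{DI} \cap {}^\bot\mathcal{DI} = \mathcal{FI}$: the inclusion $\supseteq$ is immediate from $\mathcal{FI} \subseteq w\mathcal{DI}$ and $\mathcal{FI} \subseteq {}^\bot\mathcal{DI}$, while for $\subseteq$ one takes $N \in w\mathcal{DI} \cap {}^\bot\mathcal{DI}$, uses the surjective $\mathcal{FI}$-precover coming from its defining complex together with the resolving property of ${}^\bot\mathcal{DI}$ to reduce to a splitting, and concludes $N \in \mathcal{FI}$. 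Hence $C$ is FP-injective.

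Finally I would split $0 \to M \to D \to C \to 0$ and, running the dual construction on the other end of $\mathbf{E}$ to realize $M$ also as an extension $0 \to C' \to D' \to M \to 0$ with $C' \in \mathcal{FI}$ and $D' \in \mathcal{DI}$, match the two FP-injective pieces to produce a genuine decomposition $M = F \oplus D_0$ with $F \in \mathcal{FI}$; the complement $D_0$ is weakly Ding injective (being a summand of $M$) and lies in $\mathcal{FI}^\bot$ because $C,C'$ do. I expect this last step to be the main obstacle: neither the $\mathcal{FI}$-cover nor the $\mathcal{DI}$-envelope of $M$ splits in general, so obtaining the actual splitting must genuinely exploit that $M$ sits inside a complex that is exact in both directions (so the FP-injective ``error terms'' on the two sides can be cancelled against one another), rather than relying on a single short exact sequence. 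Verifying that the comparison complex $\mathbf{C}$ really inherits $Hom(\mathcal{FI},-)$-exactness, and carrying out this cancellation precisely, are the points needing the most care.
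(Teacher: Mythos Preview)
Your argument has a genuine gap at the step $w\mathcal{DI}\cap{}^\bot\mathcal{DI}\subseteq\mathcal{FI}$. Given $N$ in this intersection and the defining sequence $0\to N'\to E\to N\to 0$ with $E$ FP-injective, the resolving property of ${}^\bot\mathcal{DI}$ only yields $N'\in{}^\bot\mathcal{DI}$; it gives no vanishing of $Ext^1(N,N')$, so nothing splits and you cannot conclude $N$ is a summand of $E$. In fact the equality $w\mathcal{DI}\cap{}^\bot\mathcal{DI}=\mathcal{FI}$ is a \emph{consequence} of the theorem you are trying to prove (via Proposition~1 of the paper one writes $N=F\oplus D$ with $D$ Ding injective, and then $D\in\mathcal{DI}\cap{}^\bot\mathcal{DI}=Inj$), so relying on it here is circular. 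The further ``cancellation'' you propose for turning the two short exact sequences into an honest direct-sum decomposition of $M$ is, as you yourself flag, the main obstacle, and nothing in the outline actually overcomes it: having $0\to M\to D\to C\to 0$ with $C$ FP-injective does not split unless $M\in\mathcal{FI}^\bot$, which is part of what you want to show.

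The argument the paper quotes from [46] sidesteps all of this by working with the \emph{next} cycle rather than with $M$. From the defining complex there is an exact, $Hom(\mathcal{FI},-)$-exact sequence $0\to M\to F^0\to K\to 0$, so $F^0\to K$ is an $\mathcal{FI}$-precover of $K$. Over a left coherent ring $K$ has an $\mathcal{FI}$-cover $G\to K$; Wakamatsu's Lemma gives $H:=\ker(G\to K)\in\mathcal{FI}^\bot$. The standard comparison of a precover with the cover then yields a decomposition $F^0\simeq G\oplus V$ with $V\subseteq\ker(F^0\to K)=M$, whence $M=H\oplus V$ with $V$ FP-injective (as a summand of $F^0$) and $H\in\mathcal{FI}^\bot$. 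The splitting is thus forced by the precover/cover comparison, not by any $Ext$-vanishing for $M$; you already observed that $E_1\to M$ is an $\mathcal{FI}$-precover, so the same idea applied one step to the right gives the result directly.
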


 In fact, the proof of this result given in [46] gives more details on the  modules $F$ and $D$. Since the module $ $M is weakly Ding injective, there is an exact
 sequence $0 \rightarrow  M \rightarrow  F^0  \rightarrow K \rightarrow 0$  with $F^0$ FP-injective, and with $K$
 weakly Ding injective. The proof of Theorem 2 shows that $M = F \oplus H$ where $F$ is isomorphic to a direct summand of $F^0$, and $D$ is isomorphic
 to $H$, where $ 0 \rightarrow  H \rightarrow  G \rightarrow K \rightarrow 0$ is exact such that $G \rightarrow  K$ is an FP-injective cover.

\section{main results}

We show that over any coherent ring $R$, every weakly Ding injective module $M$ is a direct sum, $M = F \oplus D$ with $F$ FP-injective and with $D$ Ding injective.

\begin{lemma}
Let $R$ be a left coherent ring.
If $M = F \oplus I$ is a weakly Ding injective module, with $F$ FP-injective and with $I$ in $\mathcal{FI}^\bot$, then there is an exact and $Hom(\mathcal{FI}, -)$ exact complex $\ldots \rightarrow E_2 \rightarrow E_1 \rightarrow E_0 \rightarrow H \rightarrow 0$ with all $E_i$ injective modules.

\end{lemma}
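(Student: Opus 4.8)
The plan is to build the required complex around the summand $I$ — I read the ``$H$'' in the statement as a copy of $I$, which (by Theorem 2 and the refinement of it recalled above) is the module one ultimately wants to recognise as Ding injective.

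\textbf{Step 1: a good FP-injective cover of $I$.} Since $M$ is weakly Ding injective, fix an exact and $Hom(\mathcal{FI},-)$-exact complex $\mathbf{E}$ of FP-injective modules with $M=Z_0\mathbf{E}$; all its cycles are again weakly Ding injective. The left half of $\mathbf{E}$ is an exact, $Hom(\mathcal{FI},-)$-exact complex $\cdots\to F_1\to F_0\to M\to 0$ of FP-injectives; composing with the projection $M\to I$ and, via Theorem 2, splitting off the FP-injective summand of the first cycle, I would produce a short exact sequence $0\to J\to P\to I\to 0$ with $P$ FP-injective and $J$ weakly Ding injective and in $\mathcal{FI}^{\bot}$. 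Absorbing the FP-injective summand of a cycle uses coherence: over a coherent ring the FP-injective dimension of an FP-injective module is $0$, so $Ext^{\,i}(Q,F')=0$ for every finitely presented $Q$, every FP-injective $F'$ and every $i\ge 1$; hence a quotient of an FP-injective module by an FP-injective submodule is again FP-injective.

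\textbf{Step 2: a pushout to reach an injective, then iterate.} Embed $P$ in an injective module $E_0$; since $R$ is coherent, $E_0/P$ is FP-injective. Form the pushout $Q_0$ of $P\hookrightarrow E_0$ along the surjection $P\to I$. Then $E_0$ surjects onto $Q_0$, and the exact sequence $0\to I\to Q_0\to E_0/P\to 0$ splits because $I\in\mathcal{FI}^{\bot}$ and $E_0/P\in\mathcal{FI}$; composing, $E_0\to Q_0\to I$ realises $I$ as a quotient of the injective $E_0$, with kernel $W_0$ sitting in a likewise split sequence $0\to J\to W_0\to E_0/P\to 0$, so $W_0\cong J\oplus(E_0/P)$. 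Repeating the construction on the summand $J$ (it lies in $w\mathcal{DI}\cap\mathcal{FI}^{\bot}$, exactly as $I$ did) and on $E_0/P$ (which is a quotient of $E_0$ with FP-injective kernel $P$), one assembles an exact complex $\cdots\to E_2\to E_1\to E_0\to I\to 0$ of injective modules.

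\textbf{Main obstacle: keeping $Hom(\mathcal{FI},-)$-exactness.} The serious point is that the complex must remain exact after applying $Hom(\mathcal{FI},-)$. Since the $E_i$ are injective, this is equivalent to every syzygy of the complex lying in $\mathcal{FI}^{\bot}$, and in the naive iteration above the syzygies pick up FP-injective summands (first $E_0/P$, then $P$, and so on) which need not belong to $\mathcal{FI}^{\bot}$ — over a coherent non-noetherian ring one has $Ext^1(\mathcal{FI},\mathcal{FI})\ne 0$ in general. So the heart of the argument is to choose, at each stage, the injective module mapping onto the current module so that its kernel again lies in $\mathcal{FI}^{\bot}$. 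I expect this to require, beyond the elementary coherent-ring facts above, the completeness of the cotorsion pair $(^{\bot}\mathcal{DI},\mathcal{DI})$ from Theorem 1 — so that the FP-injective ``error terms'' can be traded for modules in $\mathcal{FI}^{\bot}$ without breaking exactness — together with the facts that $\mathcal{FI}\subseteq{}^{\bot}\mathcal{DI}$ and hence $\mathcal{DI}\subseteq\mathcal{FI}^{\bot}$, which keep the relevant $Ext$-groups in hand. Once all syzygies are kept inside $\mathcal{FI}^{\bot}$, exactness of the complex obtained by applying $Hom(\mathcal{FI},-)$ is automatic and the lemma follows.
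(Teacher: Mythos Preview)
Your proposal has a real gap, and you have also overlooked that Step 1 (if it works) already finishes the argument.

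Suppose Step 1 really produces a short exact sequence $0\to J\to P\to I\to 0$ with $P\in\mathcal{FI}$ and $J\in\mathcal{FI}^{\bot}$. Since $I\in\mathcal{FI}^{\bot}$ as well and $\mathcal{FI}^{\bot}$ is closed under extensions, $P\in\mathcal{FI}\cap\mathcal{FI}^{\bot}=Inj$. So $P$ is already injective and, since $J$ is again weakly Ding injective and in $\mathcal{FI}^{\bot}$, you can iterate Step 1 verbatim; every syzygy lies in $\mathcal{FI}^{\bot}$ and the complex is automatically $Hom(\mathcal{FI},-)$-exact. Your Step 2 pushout, with all its FP-injective ``error terms'', is never needed, and your speculative appeal to the $(^{\bot}\mathcal{DI},\mathcal{DI})$ cotorsion pair is beside the point.

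The substantive issue is therefore whether Step 1 can actually be justified. Your phrase ``via Theorem 2, splitting off the FP-injective summand of the first cycle'' hides the work: you must show that the $\mathcal{FI}^{\bot}$-summand of the previous cycle $M_0$ coincides (up to isomorphism) with the kernel $J$ of a suitable FP-injective surjection onto $I$. The paper makes this precise by using FP-injective \emph{covers}. Over a left coherent ring $\mathcal{FI}$ is closed under direct limits, so covers exist, and by Wakamatsu's lemma the kernel of any $\mathcal{FI}$-cover lies in $\mathcal{FI}^{\bot}$. Taking $E\to I$ to be the FP-injective cover of $I$ immediately gives $0\to J\to E\to I\to 0$ with $J\in\mathcal{FI}^{\bot}$, hence $E$ injective. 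The proof of Theorem~2 then identifies the $\mathcal{FI}^{\bot}$-summand of $M_0$ with this same $J$, so the iteration proceeds. That use of covers (rather than arbitrary FP-injective surjections followed by embeddings in injectives) is the missing idea in your write-up.
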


\begin{proof}
By the proof of Theorem 2, $I$ is the kernel of an FP-injective cover of a weakly Ding injective module.
Since $M$ is weakly Ding injective, and since $M = I \oplus K$, it follows that the FP-injective cover of $M$ is surjective, of the form $E\oplus K \rightarrow I \oplus K \rightarrow 0$.\\
So there is an exact sequence $0 \rightarrow J \rightarrow E \rightarrow I \rightarrow 0$ with $E \rightarrow I$ an FP-injective cover, where $K \in w\mathcal{DI}$.\\
Both $I$ and $J$ are in $\mathcal{FI}^\bot$, so $E \in \mathcal{FI}^\bot \bigcap \mathcal{FI}$. Thus $E$ is an injective module.\\

By the definition of a weakly Ding injective module, there is an exact sequence $0 \rightarrow M_0 \rightarrow F_0 \rightarrow M \rightarrow 0$ with $F_0$ FP-injective and with $M_0$ weakly Ding injective.\\
By the proof of Theorem 2, we have that $F_0 = E \oplus K \oplus V$ and $M_0 = J \oplus V$, where $V$ is FP-injective, and $J \in \mathcal{FI}^\bot$. \\
Then (since $M_0 = J \oplus V$ is weakly Ding injective) there is an exact sequence $0 \rightarrow J_1 \rightarrow E_1 \oplus V \rightarrow J \oplus V \rightarrow 0$ where $E_1 \oplus V \rightarrow J \oplus V = M_0$ is an FP-injective cover, so $J_1 \in \mathcal{FI}^\bot$.  This gives an exact sequence $0 \rightarrow J_1 \rightarrow E_1 \rightarrow J \rightarrow 0$ with both $J, J_1 \in \mathcal{FI}^\bot$. Therefore $E_1 \in \mathcal{FI} \bigcap \mathcal{FI}^\bot = Inj$.\\
Thus we have an exact sequence $0 \rightarrow J_1 \rightarrow E_1 \rightarrow E \rightarrow I \rightarrow 0$ with $E,E_1$ injective modules, and with $J_1 \in \mathcal{FI}^\bot$.\\
Also, because $M_0 = J \oplus V$ is weakly Ding injective, there is an exact sequence $0 \rightarrow M_1 \rightarrow F_1 \rightarrow M_0  \rightarrow 0$. The proof of theorem 2 gives that $M_1 = J_1 \oplus T$ where $T$ is FP-injective. Then, the same argument as above shows that there is an exact sequence $0 \rightarrow J_2 \rightarrow E_2 \rightarrow J_1 \rightarrow 0$ with $J_2 \in \mathcal{FI}^\bot$, and with $E_2 \in \mathcal{FI} \bigcap \mathcal{FI}^\bot = Inj$.\\
Continuing, we obtain an exact complex $\ldots \rightarrow E_2 \rightarrow E_1 \rightarrow E \rightarrow I \rightarrow 0$ where all the $E_i$s are injective modules, and all cycles are in $\mathcal{FI}^\bot$.
\end{proof}

\begin{lemma}
Let $R$ be a left coherent ring.
If $M = F \oplus I$ is a weakly Ding injective module with $F$ FP-injective and with $I$ in $\mathcal{FI}^\bot$, then there is an exact and $Hom(\mathcal{FI}, -)$ exact complex $0 \rightarrow I \rightarrow E^0 \rightarrow E^1 \rightarrow E^2 \rightarrow \ldots$ with all $E^i$ injective modules.
\end{lemma}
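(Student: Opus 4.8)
The plan is to dualize the construction in Lemma~1, building a "right" resolution of $I$ by injectives instead of a "left" one. First I would recall what the proof of Theorem~2 gives us: since $M = F \oplus I$ is weakly Ding injective, there is a short exact sequence $0 \to M \to F^0 \to K \to 0$ with $F^0$ FP-injective and $K$ weakly Ding injective, and moreover $I$ is (up to isomorphism) the cokernel of an FP-injective preenvelope arising inside this data. I would first extract from this an exact sequence $0 \to I \to E^0 \to I' \to 0$ with $E^0$ injective and $I' \in \mathcal{FI}^\bot$. The key point making $E^0$ injective is the same as in Lemma~1: using the direct sum decomposition $M = I \oplus F$ and the weak Ding injectivity of $M$, the relevant FP-injective (co)syzygy sits in $\mathcal{FI} \cap \mathcal{FI}^\bot = Inj$, because both $I$ and the next cosyzygy lie in $\mathcal{FI}^\bot$.

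Next I would iterate. Given $0 \to I \to E^0 \to I' \to 0$ with $I' \in \mathcal{FI}^\bot$, I need to know that $I'$ is again (a summand of) a weakly Ding injective module of the form FP-injective $\oplus$ something in $\mathcal{FI}^\bot$, so that the same step applies to $I'$. Here I would use that $w\mathcal{DI}$ modules have FP-injective coresolutions with weakly Ding injective cosyzygies (directly from Definition~1, reading the complex $\mathbf{E}$ to the right), combined with Theorem~2 applied to each cosyzygy to split off the FP-injective part. Splicing the resulting short exact sequences $0 \to I' \to E^1 \to I'' \to 0$, $0 \to I'' \to E^2 \to I''' \to 0$, and so on, yields the exact complex $0 \to I \to E^0 \to E^1 \to E^2 \to \cdots$ with all $E^i$ injective and all cosyzygies in $\mathcal{FI}^\bot$.

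Finally I would check $Hom(\mathcal{FI}, -)$-exactness. Exactness of $Hom(A, -)$ applied to the complex, for $A$ FP-injective, reduces to showing $Ext^1(A, Z) = 0$ for every cosyzygy $Z$ of the complex; but every such $Z$ lies in $\mathcal{FI}^\bot$ by construction, so this vanishing is immediate. (If one wants the complex to be a full $Hom(\mathcal{FI},-)$-exact complex of injectives in the traditional two-sided sense, one splices this right resolution with the left resolution from Lemma~1; but for the statement as written, the right half suffices.)

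The main obstacle I anticipate is the bookkeeping in the iteration step — specifically, verifying that each cosyzygy $I^{(n)}$ genuinely arises as the $\mathcal{FI}^\bot$-summand of a weakly Ding injective module (not merely as a module in $\mathcal{FI}^\bot$), since that is what licenses re-applying Theorem~2 and keeping the next term injective rather than just FP-injective. This is exactly the role played by the "more details" paragraph following Theorem~2, and tracking those details carefully through the induction is where the real work lies; the injectivity conclusions $E^i \in \mathcal{FI} \cap \mathcal{FI}^\bot = Inj$ and the $Hom(\mathcal{FI},-)$-exactness are then formal.
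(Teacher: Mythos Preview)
Your proposal is correct and follows essentially the same approach as the paper: iteratively apply Theorem~2 to the successive weakly Ding injective cosyzygies $M^0, M^1, \ldots$ to split off FP-injective summands, extract short exact sequences $0 \to I^{(n)} \to E^n \to I^{(n+1)} \to 0$ with $E^n \in \mathcal{FI} \cap \mathcal{FI}^\bot = Inj$, and splice. One small slip: in the data from Theorem~2, $I$ arises as the \emph{kernel} of the FP-injective \emph{cover} of the next weakly Ding injective module (not the cokernel of a preenvelope), but this does not affect the structure of your argument.
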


\begin{proof}
By definition there is an exact sequence $0 \rightarrow M \rightarrow F^0 \rightarrow M^0 \rightarrow 0$ with $F$ FP-injective and with $M^0$ weakly Ding injective. By the proof of theorem 2, $M = I \oplus K$ where $K$ is FP-injective and $I \in \mathcal{FI}^\bot$.\\ More precisely, there is an exact sequence $0 \rightarrow I \rightarrow F \rightarrow M^0 \rightarrow 0$ with $F \rightarrow M^0 \rightarrow 0$ the FP-injective cover.\\
$M^0$ is weakly Ding injective, so there is an exact sequence $0 \rightarrow M^0 \rightarrow F^1 \rightarrow M^1 \rightarrow 0$ with $F^1$ FP-injective and with $M^1$ weakly Ding injective. As above, $M^0 = I^1 \oplus K^1$, $F^1 = K^1 \oplus F'$ where $0 \rightarrow I^1 \rightarrow F^1 \rightarrow M^1 \rightarrow 0$ is exact with $F^1 \rightarrow M^1 \rightarrow 0$ the FP-injective cover.\\

Since $M^0 = I^1 \oplus K^1$ with $K^1$ FP-injective, and $F \rightarrow M^0$ is the FP-injective cover, it follows that $F = E^1 \oplus K^1$.\\
So there is an exact sequence $0 \rightarrow I \rightarrow E^1 \oplus K^1 \rightarrow I^1 \oplus K^1 \rightarrow 0$. After factoring out the exact subcomplex $0 \rightarrow K^1 = K^1 \rightarrow 0$ we obtain an exact sequence $0 \rightarrow I \rightarrow E^1 \rightarrow I^1 \rightarrow 0$ with $I, I^1 \in \mathcal{FI}^\bot$ and with $E^1 \in \mathcal{FI} \bigcap \mathcal{FI}^\bot = Inj$.\\

Continuing we obtain an exact complex $0 \rightarrow I \rightarrow I^1 \rightarrow I^2 \rightarrow \ldots$ with all $E^j$ injective modules and with all cycles in $\mathcal{FI}^\bot$.
\end{proof}

\begin{proposition}
Let $R$ be a coherent ring. If $M$ is a weakly Ding injective $R$-module then $M = K \oplus I$ with $I$ Ding injective and with $K$ FP-injective.
\end{proposition}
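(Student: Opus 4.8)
The plan is to combine Theorem 2 with Lemmas 1 and 2. Since $M$ is weakly Ding injective and $R$ is (left) coherent, Theorem 2 gives a decomposition $M = F \oplus I$ with $F$ FP-injective and $I \in \mathcal{FI}^\bot$. It therefore suffices to show that this summand $I$ is Ding injective; then $K := F$ together with this $I$ furnishes the asserted decomposition. So the whole problem reduces to producing an exact and $Hom(\mathcal{FI},-)$-exact complex of injective modules having $I$ as a cycle.

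I would build such a complex by splicing. Lemma 1 provides an exact, $Hom(\mathcal{FI},-)$-exact complex $\ldots \to E_1 \to E_0 \to I \to 0$ with every $E_i$ injective and all of its cycles in $\mathcal{FI}^\bot$; Lemma 2 provides an exact, $Hom(\mathcal{FI},-)$-exact complex $0 \to I \to E^0 \to E^1 \to \ldots$ with every $E^j$ injective and all of its cocycles in $\mathcal{FI}^\bot$. Composing the surjection $E_0 \to I$ from Lemma 1 with the injection $I \to E^0$ from Lemma 2 yields a map $E_0 \to E^0$, and the resulting complex $\mathbf{E}\colon \ldots \to E_1 \to E_0 \to E^0 \to E^1 \to \ldots$ is exact: at $E_0$ because $\Ker(E_0 \to E^0) = \Ker(E_0 \to I) = \Im(E_1 \to E_0)$, and at $E^0$ because $\Im(E_0 \to E^0) = I = \Ker(E^0 \to E^1)$. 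Moreover every cycle of $\mathbf{E}$ lies in $\mathcal{FI}^\bot$: to the left they are the cycles from Lemma 1, in the middle it is $I$ itself, and to the right they are the cocycles from Lemma 2. After reindexing, $Z_0\mathbf{E} = I$.

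Finally I would check that $\mathbf{E}$ stays exact after applying $Hom(A,-)$ for an arbitrary FP-injective module $A$. Decompose $\mathbf{E}$ into the short exact sequences $0 \to C' \to E \to C \to 0$ whose outer terms $C, C'$ are cycles of $\mathbf{E}$, hence lie in $\mathcal{FI}^\bot$. The long exact sequence for $Hom(A,-)$ reads $0 \to Hom(A,C') \to Hom(A,E) \to Hom(A,C) \to Ext^1(A,C')$, and $Ext^1(A,C') = 0$ since $A$ is FP-injective and $C' \in \mathcal{FI}^\bot$; splicing these short exact sequences back together shows $Hom(A,\mathbf{E})$ is exact. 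Thus $\mathbf{E}$ is an exact and $Hom(\mathcal{FI},-)$-exact complex of injective modules with $Z_0\mathbf{E} = I$, so $I$ is Ding injective and $M = F \oplus I$ is the required decomposition. I expect no genuinely hard step once Lemmas 1 and 2 are available; the only point to watch is that ``all cycles in $\mathcal{FI}^\bot$'' is exactly what promotes an exact complex of injectives to a $Hom(\mathcal{FI},-)$-exact one, via the vanishing of $Ext^1(A,-)$ on $\mathcal{FI}^\bot$ for FP-injective $A$.
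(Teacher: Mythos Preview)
Your proposal is correct and follows essentially the same approach as the paper: decompose $M = F \oplus I$ via Theorem 2, invoke Lemmas 1 and 2 to obtain the left and right injective (co)resolutions of $I$ with cycles in $\mathcal{FI}^\bot$, splice them, and conclude that $I$ is Ding injective. If anything, you supply more detail than the paper does on why the spliced complex is exact and why cycles in $\mathcal{FI}^\bot$ force $Hom(\mathcal{FI},-)$-exactness.
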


\begin{proof} By Theorem 2, $M = K \oplus I$ with $K$ FP-injective and with $I \in \mathcal{FI}^\bot$.
By Lemma 1 and Lemma 2, there are exact complexes:
$$0 \rightarrow I \rightarrow E^1 \rightarrow E^2 \rightarrow \ldots$$
and $$\ldots \rightarrow E_2 \rightarrow E_1 \rightarrow E \rightarrow I \rightarrow 0$$

with all $E_j$ and $E^i$ injective modules, and with all cycles in $\mathcal{FI}^\bot$.\\
Pasting them together we obtain an exact and $Hom(\mathcal{FI}, -)$ exact complex of injective modules $\ldots \rightarrow E_2 \rightarrow E_1 \rightarrow E \rightarrow E^1 \rightarrow E^2 \rightarrow \ldots$, with $I$ being one of the cycles. \\
Thus $I$ is a Ding injective module.
\end{proof}

\begin{proposition}
Let $R$ be a left coherent ring. Then the left orthogonal class of $w \mathcal{DI}$ is $FP-Proj \bigcap (^\bot \mathcal{DI})$.
\end{proposition}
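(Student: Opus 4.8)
The plan is to prove the two inclusions $^\bot w\mathcal{DI} \subseteq FP\text{-}Proj \cap {}^\bot\mathcal{DI}$ and $FP\text{-}Proj \cap {}^\bot\mathcal{DI} \subseteq {}^\bot w\mathcal{DI}$ separately. The ``$\subseteq$'' direction should be nearly immediate from the structural facts already available: since every FP-injective module is weakly Ding injective, any $N \in {}^\bot w\mathcal{DI}$ satisfies $Ext^1(N,F)=0$ for all FP-injective $F$, which is exactly the defining condition for $N$ to lie in $FP\text{-}Proj$ (the left orthogonal of the FP-injectives). Likewise, since every Ding injective module is weakly Ding injective, $Ext^1(N,D)=0$ for all $D\in\mathcal{DI}$, so $N\in{}^\bot\mathcal{DI}$. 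Hence $^\bot w\mathcal{DI}\subseteq FP\text{-}Proj\cap{}^\bot\mathcal{DI}$ with essentially no work.

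The substance is the reverse inclusion. Take $N\in FP\text{-}Proj\cap{}^\bot\mathcal{DI}$ and an arbitrary weakly Ding injective module $M$; I must show $Ext^1(N,M)=0$. By Proposition 1 (the immediately preceding result), $M = K\oplus I$ with $K$ FP-injective and $I$ Ding injective. Then $Ext^1(N,M)\cong Ext^1(N,K)\oplus Ext^1(N,I)$. The first summand vanishes because $N\in FP\text{-}Proj$ and $K$ is FP-injective; the second vanishes because $N\in{}^\bot\mathcal{DI}$ and $I$ is Ding injective. Therefore $Ext^1(N,M)=0$ for every $M\in w\mathcal{DI}$, i.e. $N\in{}^\bot w\mathcal{DI}$.

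The main obstacle — or rather, the place where the real content is hidden — is entirely loaded onto Proposition 1, whose proof (via Lemmas 1 and 2) supplies the splitting $M=K\oplus I$ into an FP-injective part and a Ding injective part; once that decomposition is in hand, the orthogonality computation is routine additivity of $Ext^1$ over direct sums. One should double-check the naming convention: the paper writes $FP\text{-}Proj$ for the left orthogonal of the class $\mathcal{FI}$ of FP-injectives, so the identity ``$Ext^1(N,F)=0$ for all $F\in\mathcal{FI}$ iff $N\in FP\text{-}Proj$'' is a definition, not a theorem. Hence the only genuine input beyond definitions is Proposition 1 for the ``$\supseteq$'' direction, and the inclusions $\mathcal{DI}\subseteq w\mathcal{DI}$ and $\mathcal{FI}\subseteq w\mathcal{DI}$ (noted right after Definition 1) for the ``$\subseteq$'' direction.
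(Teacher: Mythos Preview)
Your proof is correct and matches the paper's own argument essentially line for line: the ``$\subseteq$'' direction uses the inclusions $\mathcal{FI}\subseteq w\mathcal{DI}$ and $\mathcal{DI}\subseteq w\mathcal{DI}$, and the ``$\supseteq$'' direction invokes Proposition~1 to split an arbitrary $M\in w\mathcal{DI}$ as $F\oplus D$ and then computes $Ext^1$ on each summand. Your commentary about where the real content lies (Proposition~1) is accurate.
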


\begin{proof}
$"\supseteq"$ Let $T \in FP-Proj \bigcap (^\bot \mathcal{DI})$. If $M$ is a weakly Ding injective module, then $M = F \oplus H$ with $F$ an FP-injective module and with $H$ Ding injective. Then $Ext^1(T,M) \simeq Ext^1(T,F) \oplus Ext^1(T, H)$. Since $T$ is FP-projective, $Ext^1(T,F) =0$, and since $T$ is in $^\bot \mathcal{DI}$, we have that $Ext^1(T,H)=0$. It follows that $Ext^1(T, M)=0$.

$"\subseteq"$ We have $FP-Inj \subseteq w \mathcal{DI}$, which implies that $^\bot w \mathcal{DI} \subseteq {}^\bot FP-Inj = FP-Proj$.
Since $\mathcal{DI} \subseteq w\mathcal{DI}$, we also have that $^\bot w\mathcal{DI} \subseteq ^\bot\mathcal{DI} $. So $^\bot w \mathcal{DI} \subseteq FP-Proj \bigcap ^\bot \mathcal{DI}$.
\end{proof}

\begin{lemma}
Let $R$ be a coherent ring. Let $M \in (^\bot w \mathcal{DI})^\bot$. If $M$ is also in $^\bot \mathcal{DI}$, then $M$ is FP-injective.
\end{lemma}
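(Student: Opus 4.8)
My plan is to embed $M$ into an FP-injective module via a special preenvelope and then show that sequence splits. The cotorsion pair $(FP-Proj,\mathcal{FI})$ is complete, being cogenerated by a set (the isomorphism classes of finitely presented modules), so there is an exact sequence
$$0 \longrightarrow M \longrightarrow F \longrightarrow N \longrightarrow 0$$
with $F \in \mathcal{FI}$ and $N \in FP-Proj$. I will prove that $N \in {}^\bot w\mathcal{DI}$. Granting this, $Ext^1(N,M)=0$ since $M\in ({}^\bot w\mathcal{DI})^\bot$, so the displayed sequence splits; then $M$ is a direct summand of $F$, and as $\mathcal{FI}$ is closed under direct summands, $M$ is FP-injective.

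By Proposition 2, ${}^\bot w\mathcal{DI} = FP-Proj \bigcap ({}^\bot\mathcal{DI})$, and $N$ is already FP-projective, so it suffices to show $N\in {}^\bot\mathcal{DI}$, that is, $Ext^1(N,D)=0$ for every $D\in\mathcal{DI}$. Two facts go into this. First, $Ext^1(F,D)=0$, i.e. $\mathcal{FI}\subseteq {}^\bot\mathcal{DI}$: this is immediate from the definition of a Ding injective module, since applying $Hom(F,-)$ to the exact complex of injectives defining $D$ keeps it exact. Hence, applying $Hom(-,D)$ to the displayed sequence gives $Ext^1(N,D)\cong \mathrm{Coker}\big(Hom(F,D)\to Hom(M,D)\big)$, so it is enough to show that every $\phi\colon M\to D$ extends along the inclusion $M\hookrightarrow F$.

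This is where the hypothesis $M\in {}^\bot\mathcal{DI}$ is used. Since $D$ is Ding injective, $D$ is the image of a differential $E_1\to E_0$ in an exact complex of injective modules, so we obtain a short exact sequence $0\to Z\to E_1\xrightarrow{p} D\to 0$ with $E_1$ injective and $Z$ again a cycle of that complex, hence $Z\in\mathcal{DI}$. Because $M\in {}^\bot\mathcal{DI}$ we have $Ext^1(M,Z)=0$, so $\phi$ lifts to $\widetilde\phi\colon M\to E_1$ with $p\widetilde\phi=\phi$; since $E_1$ is injective and $M\hookrightarrow F$ is a monomorphism, $\widetilde\phi$ extends to $\Psi\colon F\to E_1$, and then $p\Psi\colon F\to D$ extends $\phi$. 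Thus $Ext^1(N,D)=0$, which gives $N\in {}^\bot w\mathcal{DI}$ and completes the argument.

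The delicate step is the extension of $\phi\colon M\to D$: one cannot do it directly, because $D$ need not be injective or even FP-injective. The device is to trade $D$ for the injective module $E_1$ that surjects onto it with Ding injective kernel (read off from the complex defining $D$), lift $\phi$ past this surjection using $M\in {}^\bot\mathcal{DI}$, and only afterwards use injectivity of $E_1$ to extend across $F$. Everything else — completeness of $(FP-Proj,\mathcal{FI})$, the vanishing $Ext^1(F,D)=0$, and closure of $\mathcal{FI}$ under direct summands — is routine, and the hypothesis $M\in ({}^\bot w\mathcal{DI})^\bot$ is invoked just once, to split the preenvelope.
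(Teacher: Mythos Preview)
Your proof is correct and follows the same overall strategy as the paper: embed $M$ in an FP-injective module via the complete cotorsion pair $(FP\text{-}Proj,\mathcal{FI})$, show the cokernel lies in ${}^\bot w\mathcal{DI}=FP\text{-}Proj\cap{}^\bot\mathcal{DI}$, and then split using $M\in({}^\bot w\mathcal{DI})^\bot$.

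The only difference is in how you verify that the cokernel $N$ belongs to ${}^\bot\mathcal{DI}$. The paper simply cites that ${}^\bot\mathcal{DI}$ is a thick class (from \cite{gil-iacob}), so $M,F\in{}^\bot\mathcal{DI}$ forces $N=F/M\in{}^\bot\mathcal{DI}$ immediately. You instead prove this by hand: you use the surjection $E_1\twoheadrightarrow D$ with injective $E_1$ and Ding injective kernel $Z$ (read off from the defining complex of $D$), lift $\phi\colon M\to D$ through $E_1$ using $Ext^1(M,Z)=0$, and then extend over $F$ by injectivity of $E_1$. This is exactly the dimension-shift argument underlying the thickness statement, so your proof is self-contained where the paper's is a one-line citation. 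Either way the content is the same.
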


\begin{proof}
Since $(FP-Proj, \mathcal{FI})$ is a complete cotorson pair (by \cite{trlifaj}, Theorem 3.8), there is an exact sequence $0 \rightarrow M \rightarrow A \rightarrow T \rightarrow 0$ with $A$ FP-injective, and with $T$ FP-projective.\\
Both $M$ and $A$ are in $^\bot \mathcal{DI}$, and this class is thick (by \cite{gil-iacob}, Lemma 22). It follows that $T$ is also in $^\bot \mathcal{DI}$.\\
Thus $T \in FP-Proj \bigcap ^\bot\mathcal{DI}= ^\bot w \mathcal{DI}$ (by Proposition 2), and $M \in (^\bot w \mathcal{DI})^\bot$. Therefore $Ext^1(T,M) =0$. Then $A \simeq M \oplus T$, which implies that $M$ is an FP-injective module.
\end{proof}

\begin{lemma}
Let $R$ be a left coherent ring. $_RM \in (^\bot w\mathcal{DI})^\bot$ if and only if there is an exact sequence $0 \rightarrow D \rightarrow F \rightarrow M \rightarrow 0$ with $D$ Ding injective and with $F$ FP-injective.
\end{lemma}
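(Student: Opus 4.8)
The plan is to prove the two implications separately, with essentially all of the content in ``$\Rightarrow$''.

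For ``$\Leftarrow$'', suppose $0 \to D \to F \to M \to 0$ is exact with $D$ Ding injective and $F$ FP-injective, and fix an arbitrary $T \in {}^\bot w\mathcal{DI}$; we must check $Ext^1(T,M)=0$. By Proposition 2, $T$ is simultaneously FP-projective and an object of $^\bot\mathcal{DI}$. Applying $Hom(T,-)$ to the sequence produces an exact segment $Ext^1(T,F) \to Ext^1(T,M) \to Ext^2(T,D)$, whose outer terms both vanish: $Ext^1(T,F)=0$ because $T$ is FP-projective and $F$ is FP-injective, and $Ext^2(T,D)=0$ because $T \in {}^\bot\mathcal{DI}$, $D \in \mathcal{DI}$, and the cotorsion pair $({}^\bot\mathcal{DI},\mathcal{DI})$ is hereditary (Theorem 1). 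Hence $Ext^1(T,M)=0$, and since $T$ was arbitrary, $M \in (^\bot w\mathcal{DI})^\bot$.

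For ``$\Rightarrow$'', assume $M \in (^\bot w\mathcal{DI})^\bot$. First I would invoke the completeness of the Ding injective cotorsion pair $({}^\bot\mathcal{DI},\mathcal{DI})$ (Theorem 1) to obtain an exact sequence $0 \to D \to L \to M \to 0$ with $D \in \mathcal{DI}$ and $L \in {}^\bot\mathcal{DI}$; this is the natural candidate, and the only remaining task is to upgrade $L$ from an abstract object of $^\bot\mathcal{DI}$ to an FP-injective module. To that end, note that $w\mathcal{DI} \subseteq (^\bot w\mathcal{DI})^\bot$, so in particular $D \in \mathcal{DI} \subseteq (^\bot w\mathcal{DI})^\bot$; since $M \in (^\bot w\mathcal{DI})^\bot$ as well and any right orthogonal class $\mathcal{X}^\bot$ is closed under extensions, the middle term $L$ also lies in $(^\bot w\mathcal{DI})^\bot$. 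Thus $L$ belongs both to $(^\bot w\mathcal{DI})^\bot$ and to $^\bot\mathcal{DI}$, so Lemma 4 forces $L$ to be FP-injective. Taking $F = L$ completes the argument.

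I do not expect a genuine obstacle. The one point that deserves care is using the completeness of Theorem 1 in the correct variance: it must be applied to produce a surjection onto $M$ whose kernel is Ding injective and whose source lies in $^\bot\mathcal{DI}$ (the special precover half of completeness), so that the source becomes eligible for Lemma 4 rather than remaining merely an abstract object of $^\bot\mathcal{DI}$; in the ``$\Leftarrow$'' direction the only subtlety is invoking the hereditary property of $({}^\bot\mathcal{DI},\mathcal{DI})$ to kill $Ext^2(T,D)$ instead of trying to argue with $Ext^1$ alone.
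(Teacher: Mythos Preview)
Your proposal is correct and follows essentially the same route as the paper's proof. The only slip is a labeling error: in the ``$\Rightarrow$'' direction you write that ``Lemma 4 forces $L$ to be FP-injective,'' but Lemma 4 is the statement you are proving; the result you mean (and clearly have in mind) is Lemma 3, which says that a module lying in both $(^\bot w\mathcal{DI})^\bot$ and $^\bot\mathcal{DI}$ must be FP-injective.
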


\begin{proof}
Assume that $_RM \in (^\bot w\mathcal{DI})^\bot$. Since $(^\bot \mathcal{DI}, \mathcal{DI})$ is a complete cotorsion pair, there is an exact sequence $0 \rightarrow D \rightarrow C \rightarrow M \rightarrow 0$ with $D \in \mathcal{DI} \subseteq (^\bot w \mathcal{DI})^\bot$, and with $C \in ^\bot \mathcal{DI}$.\\
Both modules $D$ and $M$ are in $(^\bot w \mathcal{DI})^\bot$, which implies that $C$ is also in $(^\bot w \mathcal{DI})^\bot$. But then $C$ is both in $^\bot \mathcal{DI}$ and in $(^\bot w \mathcal{DI})^\bot$, so, by Lemma 3, it is an FP-injective module.\\

For the converse, assume that there is an exact sequence $0 \rightarrow D \rightarrow F \rightarrow M \rightarrow 0$ with $D$ Ding injective and with $F$ FP-injective.\\
Let $T \in ^\bot w\mathcal{DI} = FP-Proj \bigcap (^\bot \mathcal{DI})$. We have an exact sequence: $0 = Ext^1 (T, F) \rightarrow Ext^1(T,M) \rightarrow Ext^2(T, D)=0$ (because $(^\bot \mathcal{DI}, \mathcal{DI})$ is a hereditary cotorsion pair, by [22]). So $Ext^1(T,M)=0$, and therefore $_RM \in (^\bot w\mathcal{DI})^\bot$.
\end{proof}

\begin{proposition}
Let $R$ be a coherent ring. If $M \in (^\bot w \mathcal{DI})^\bot$ then there is an exact and $Hom(\mathcal{FI},-)$ exact complex $\ldots \rightarrow F_2 \rightarrow F_1 \rightarrow F_0 \rightarrow M \rightarrow 0$ with each $F_j$ FP-injective.
\end{proposition}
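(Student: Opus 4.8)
### Proof strategy

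The plan is to build the desired left $\mathcal{FI}$-resolution of $M$ by iterating Lemma 4. The key observation is that Lemma 4 produces, for any $M \in (^\bot w\mathcal{DI})^\bot$, a short exact sequence $0 \to D_0 \to F_0 \to M \to 0$ with $F_0$ FP-injective and $D_0$ Ding injective. Since $\mathcal{DI} \subseteq w\mathcal{DI} \subseteq (^\bot w\mathcal{DI})^\bot$, the kernel $D_0$ again lies in $(^\bot w\mathcal{DI})^\bot$, so Lemma 4 applies to it as well, giving $0 \to D_1 \to F_1 \to D_0 \to 0$ with $F_1$ FP-injective and $D_1$ Ding injective. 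Repeating this indefinitely and splicing the short exact sequences together yields an exact complex $\ldots \to F_2 \to F_1 \to F_0 \to M \to 0$ with every $F_j$ FP-injective and every syzygy $D_j$ Ding injective.

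It remains to check that this complex is $\Hom(\mathcal{FI},-)$ exact. First I would reduce this to the vanishing of $Ext^1(A, D_j)$ for every FP-injective $A$ and every $j \geq 0$: applying $\Hom(A,-)$ to each short exact sequence $0 \to D_{j+1} \to F_{j+1} \to D_j \to 0$, exactness of the spliced Hom-complex at the spot $\Hom(A,F_{j+1})$ is equivalent to the connecting map $\Hom(A,D_j) \to Ext^1(A,D_{j+1})$ being zero, which follows once $Ext^1(A,D_{j+1}) = 0$; exactness at $\Hom(A,F_0)$ follows similarly from $Ext^1(A, D_0) = 0$ together with surjectivity of $\Hom(A,F_0) \to \Hom(A,M)$. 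But each $D_j$ is Ding injective, hence in $\mathcal{FI}^\bot$ by definition (a Ding injective module is a cycle of a $\Hom(\mathcal{FI},-)$-exact complex of injectives, so in particular $\mathcal{FI}$-orthogonal), so $Ext^1(A, D_j) = 0$ for all FP-injective $A$. This gives exactness of the Hom-complex.

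Actually, to get $\Hom(\mathcal{FI},-)$ exactness of the full complex — including at the terms $\Hom(A, F_j)$ with the differentials coming from the maps $F_{j+1} \to F_j$ — one should be a little careful: the spliced complex has differential $F_{j+1} \to F_j$ that factors as $F_{j+1} \twoheadrightarrow D_j \hookrightarrow F_j$, and applying $\Hom(A,-)$ one uses that $D_j \in \mathcal{FI}^\bot$ to conclude each $0 \to \Hom(A,D_{j+1}) \to \Hom(A,F_{j+1}) \to \Hom(A,D_j) \to 0$ is exact, and then a standard diagram chase assembles these into exactness of $\Hom(A, \mathbf{F})$ at every spot. The main obstacle, such as it is, is bookkeeping: making sure the splicing is done correctly so that the cycles of the resulting complex really are the Ding injective modules $D_j$, and verifying that "Ding injective $\Rightarrow$ $\mathcal{FI}$-orthogonal" is exactly what is needed; everything else is a routine application of Lemma 4 and the definition of Ding injectivity. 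No coherence hypothesis beyond what Lemma 4 already assumes is needed.
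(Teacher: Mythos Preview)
Your proof is correct and rests on the same two ingredients as the paper's: Lemma~4, and the fact that Ding injective modules lie in $\mathcal{FI}^\bot$. The only difference is that you iterate Lemma~4 at every stage, whereas the paper applies Lemma~4 just once to obtain $0 \to D \to C \to M \to 0$ with $C$ FP-injective and $D$ Ding injective, and then reads off the rest of the resolution directly from the definition of Ding injectivity: $D$ sits as a cycle in an exact, $\Hom(\mathcal{FI},-)$-exact complex of \emph{injective} modules $\ldots \to F_2 \to F_1 \to D \to 0$, and splicing this with $0 \to D \to C \to M \to 0$ gives the desired complex in one stroke. Your iterative version works equally well (each $D_j$ is Ding injective, hence in $\mathcal{FI}^\bot$, so the spliced complex is $\Hom(\mathcal{FI},-)$-exact exactly as you argue), but the paper's shortcut avoids the repeated appeal to Lemma~4 and yields injective---not merely FP-injective---terms from degree~$1$ onward.
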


\begin{proof}
By Lemma 4, there is an exact sequence $0 \rightarrow D \rightarrow C \rightarrow M \rightarrow 0$, with $D$ Ding injective and with $C$ an FP-injective module.\\
Since $D$ is Ding injective, there is an exact complex $\ldots \rightarrow F_2 \rightarrow F_1 \rightarrow D \rightarrow 0$ with all $F_i$ injective modules, and with all cycles Ding injective modules. Thus we obtain an exact complex $\ldots \rightarrow F_2 \rightarrow F_1 \rightarrow C \rightarrow M \rightarrow 0$ with $C$ and all $F_j$ FP-injective. Since all the ith cycles ($i \ge 1$) are Ding injective modules, the complex is $Hom(\mathcal{FI},-)$ exact.
\end{proof}

We show that if ($R$ is coherent and) the character module of every module in $(^\bot w \mathcal{DI})^\bot$ is Gorenstein flat, then $(^\bot w \mathcal{DI})^\bot$ is closed under direct limits, and it is a covering class.\\
Then we prove that with the same hypothesis ($R$ is coherent and the character module of every module in $(^\bot w \mathcal{DI})^\bot$ is Gorenstein flat), the class of Gorenstein flat modules is preenveloping. In particular, we show that this is the case when $R$ is a Ding-Chen ring.

We prove first that $(^\bot w\mathcal{DI}, (^\bot w \mathcal{DI})^\bot)$ is a complete hereditary pair.

\begin{proposition}
Let $R$ be a left coherent ring. Then $(^\bot w\mathcal{DI}, (^\bot w \mathcal{DI})^\bot)$ is a hereditary cotorsion pair.
\end{proposition}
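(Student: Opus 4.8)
The statement bundles two assertions: that $({}^\bot w\mathcal{DI},({}^\bot w\mathcal{DI})^\bot)$ is a cotorsion pair, and that it is hereditary. The first is purely formal and holds for the pair $({}^\bot\mathcal{X},({}^\bot\mathcal{X})^\bot)$ attached to any class $\mathcal{X}$: one always has $\mathcal{X}\subseteq({}^\bot\mathcal{X})^\bot$ (if $X\in\mathcal{X}$ and $L\in{}^\bot\mathcal{X}$ then $Ext^1(L,X)=0$), which gives ${}^\bot(({}^\bot\mathcal{X})^\bot)\subseteq{}^\bot\mathcal{X}$, while the reverse inclusion ${}^\bot\mathcal{X}\subseteq{}^\bot(({}^\bot\mathcal{X})^\bot)$ is immediate from the definition of $({}^\bot\mathcal{X})^\bot$. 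Hence ${}^\bot(({}^\bot\mathcal{X})^\bot)={}^\bot\mathcal{X}$, and for $\mathcal{X}=w\mathcal{DI}$ this is exactly the statement that $({}^\bot w\mathcal{DI},({}^\bot w\mathcal{DI})^\bot)$ is a cotorsion pair. It then remains to verify one of the three equivalent conditions of Definition 5, and the plan is to check condition (1): that ${}^\bot w\mathcal{DI}$ is resolving, i.e.\ closed under taking kernels of epimorphisms.

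For this I would start from Proposition 2, which identifies ${}^\bot w\mathcal{DI}=FP-Proj\cap({}^\bot\mathcal{DI})$, and show that each of the two classes in this intersection is closed under kernels of epimorphisms. For ${}^\bot\mathcal{DI}$ this is part of Theorem 1, since $({}^\bot\mathcal{DI},\mathcal{DI})$ is a hereditary cotorsion pair (in fact ${}^\bot\mathcal{DI}$ is thick, as already used in the proof of Lemma 3). For $FP-Proj$ it is the assertion that, over a left coherent ring, $(FP-Proj,\mathcal{FI})$ is a hereditary cotorsion pair; equivalently, that the class $\mathcal{FI}$ of FP-injective modules is coresolving. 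The quick justification of the latter is that over a coherent ring every finitely presented module $P$ has a finitely presented first syzygy $P_1$, so $Ext^2(P,A)\cong Ext^1(P_1,A)=0$ for every FP-injective $A$, which is precisely the Ext-criterion for $\mathcal{FI}$ to be closed under cokernels of monomorphisms.

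Granted these two closure properties, the rest is routine: for an exact sequence $0\to K\to L\to L'\to 0$ with $L,L'\in{}^\bot w\mathcal{DI}=FP-Proj\cap({}^\bot\mathcal{DI})$, the kernel $K$ lies in $FP-Proj$ and in ${}^\bot\mathcal{DI}$, hence in ${}^\bot w\mathcal{DI}$; thus ${}^\bot w\mathcal{DI}$ is resolving and $({}^\bot w\mathcal{DI},({}^\bot w\mathcal{DI})^\bot)$ is a hereditary cotorsion pair. I do not expect a genuine obstacle: the proof is the formal calculus of orthogonal classes combined with Proposition 2 and Theorem 1. The only point where coherence of $R$ is really used is the heredity of $(FP-Proj,\mathcal{FI})$ — i.e.\ that finitely presented modules have finitely presented syzygies over a coherent ring — so that is the step to state with care, although one could instead simply cite it as a well-known fact and skip the re-derivation.
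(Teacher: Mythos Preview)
Your proof is correct and rests on the same two ingredients as the paper's: the heredity of $({}^\bot\mathcal{DI},\mathcal{DI})$ and the heredity of $(FP\text{-}Proj,\mathcal{FI})$ over a coherent ring. The only difference is organizational. You work on the left side, using Proposition~2 to write ${}^\bot w\mathcal{DI}=FP\text{-}Proj\cap{}^\bot\mathcal{DI}$ and then checking condition~(1) of Definition~5 (resolving) for the intersection. The paper works on the right side, using Proposition~1 to split each $M\in w\mathcal{DI}$ as $F\oplus D$ with $F$ FP-injective and $D$ Ding injective, and then checks $Ext^i(T,M)=Ext^i(T,F)\oplus Ext^i(T,D)=0$ for all $i\geq 1$ and $T\in{}^\bot w\mathcal{DI}$; this amounts to verifying condition~(3), or more precisely enough of it to force condition~(1). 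Your route is arguably a bit cleaner, since it avoids the apparent gap in the paper's presentation (which only checks Ext-vanishing against $w\mathcal{DI}$ rather than the full class $({}^\bot w\mathcal{DI})^\bot$), but substantively the two arguments are the same.
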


\begin{proof}
It is known that $(^\bot w \mathcal{DI},  (^\bot w \mathcal{DI})^\bot)$ is a cotorsion pair.\\

We show that the pair is hereditary. Let $M$ be a weakly Ding injective module. Then $M = F \oplus D$ with $F$ an FP-injective module and with $D$ a Ding injective module. Let $T \in ^\bot w \mathcal{DI}$. By Proposition 2, $M \in FP-Proj \bigcap (^\bot \mathcal{DI})$.\\
Since $T \in FP-Proj$,  $F \in \mathcal{FI}$, and $R$ is a coherent ring (so, by \cite{kathy}, Proposition 4.2, $\mathcal{FI}$ is closed under cokernels of monomorphisms), we have that $Ext^i(T,F)=0$, for all $i \ge 1$.\\
Since $(^\bot \mathcal{DI}, \mathcal{DI})$ is a hereditary cotorsion pair (over any ring), we have that $Ext^i(T,D)=0$ for all $i \ge 1$. Thus $Ext^i(T,M) = Ext^i(T,F) \oplus Ext^i(T,D)=0$, for all $i \ge 1$.
\end{proof}

\begin{theorem}
Let $R$ be a coherent ring. Then $(^\bot w \mathcal{DI},  (^\bot w \mathcal{DI})^\bot)$ is a complete and hereditary cotorsion pair.
\end{theorem}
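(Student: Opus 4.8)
The plan is to prove completeness of the pair $({}^\bot w\mathcal{DI}, ({}^\bot w\mathcal{DI})^\bot)$; Proposition 6 already gives that it is a hereditary cotorsion pair, so completeness is all that remains. By Salce's lemma it is enough to show that every module $M$ has a special ${}^\bot w\mathcal{DI}$-precover, i.e.\ an exact sequence $0 \to K \to L \to M \to 0$ with $L \in {}^\bot w\mathcal{DI}$ and $K \in ({}^\bot w\mathcal{DI})^\bot$; the special $({}^\bot w\mathcal{DI})^\bot$-preenvelopes then follow formally. The two external inputs I would use are: $(FP-Proj, \mathcal{FI})$ is a complete cotorsion pair (\cite{trlifaj}, Theorem 3.8; it is cogenerated by the set of finitely presented modules), and $({}^\bot\mathcal{DI}, \mathcal{DI})$ is a complete cotorsion pair (Theorem 1).

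First I would record the one-sided containment $\mathcal{FI} \subseteq {}^\bot\mathcal{DI}$, which is immediate from the definition of a Ding injective module: if $F$ is FP-injective and $D$ is Ding injective, write $D = Z_0(\mathbf{E})$ for an exact, $Hom(\mathcal{FI},-)$-exact complex of injectives $\mathbf{E}$; the right half $0 \to D \to E_0 \to E_{-1} \to \ldots$ is an injective coresolution of $D$ that stays exact after $Hom(F,-)$, so $Ext^i(F,D)=0$ for all $i \ge 1$, in particular $Ext^1(F,D)=0$. Now fix $M$. By completeness of $({}^\bot\mathcal{DI}, \mathcal{DI})$ choose an exact sequence $0 \to D_1 \to N_1 \to M \to 0$ with $N_1 \in {}^\bot\mathcal{DI}$ and $D_1 \in \mathcal{DI}$; by completeness of $(FP-Proj, \mathcal{FI})$ choose an exact sequence $0 \to B_1 \to P_1 \to N_1 \to 0$ with $P_1 \in FP-Proj$ and $B_1 \in \mathcal{FI}$. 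Since $B_1 \in \mathcal{FI} \subseteq {}^\bot\mathcal{DI}$ and ${}^\bot\mathcal{DI}$ is closed under extensions, $P_1 \in {}^\bot\mathcal{DI}$; together with $P_1 \in FP-Proj$, Proposition 2 gives $P_1 \in {}^\bot w\mathcal{DI}$. The composite $P_1 \to N_1 \to M$ is an epimorphism whose kernel $K$ fits into an exact sequence $0 \to B_1 \to K \to D_1 \to 0$. Because $\mathcal{FI} \subseteq w\mathcal{DI} \subseteq ({}^\bot w\mathcal{DI})^\bot$, $\mathcal{DI} \subseteq w\mathcal{DI} \subseteq ({}^\bot w\mathcal{DI})^\bot$, and $({}^\bot w\mathcal{DI})^\bot$ is closed under extensions, we get $K \in ({}^\bot w\mathcal{DI})^\bot$. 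Thus $0 \to K \to P_1 \to M \to 0$ is the required special precover, and Salce's lemma finishes the proof.

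The point to be careful about — and the reason one cannot simply quote the two completeness statements — is the step forcing $P_1$ into the intersection $FP-Proj \cap {}^\bot\mathcal{DI}$ rather than into just one of the two classes; this is exactly where $\mathcal{FI} \subseteq {}^\bot\mathcal{DI}$ is used. The order of the two approximation steps also matters: the reverse containment $\mathcal{DI} \subseteq FP-Proj$ fails in general, so one must approximate by a module of ${}^\bot\mathcal{DI}$ first and only then by one of $FP-Proj$.
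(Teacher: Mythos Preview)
Your argument is correct and is essentially identical to the paper's proof: approximate $M$ first by ${}^\bot\mathcal{DI}$, then approximate that by $FP\text{-}Proj$, use $\mathcal{FI}\subseteq{}^\bot\mathcal{DI}$ to force the resulting module into ${}^\bot w\mathcal{DI}=FP\text{-}Proj\cap{}^\bot\mathcal{DI}$, and check that the kernel lies in $({}^\bot w\mathcal{DI})^\bot$ via the extension $0\to B_1\to K\to D_1\to 0$ (the paper phrases this last step as a pullback diagram). One small slip: the hereditary cotorsion pair statement you invoke is the paper's Proposition~4, not Proposition~6.
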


\begin{proof}
We prove that $^\bot w\mathcal{DI}$ is special precovering.\\
Let $M$ be a left $R$-module. Since $(^\bot \mathcal{DI}, \mathcal{DI})$ is a complete hereditary cotorsion pair, there is a short exact sequence $0 \rightarrow D \rightarrow X \rightarrow M \rightarrow 0$ with $X \in ^\bot \mathcal{DI}$, and with $D \in \mathcal{DI}$. Since $R$ is coherent, the pair $(\mathcal{FP}-Proj, \mathcal{FI})$ is complete, so there is an exact sequence $0 \rightarrow L \rightarrow T \rightarrow X \rightarrow 0$ with $T$ an FP-projective module, and with $L$ FP-injective.
We have that $L \in \mathcal{FI} \subseteq ^\bot \mathcal{DI}$, and $X \in ^\bot \mathcal{DI}$. Since $^\bot \mathcal{DI}$ is closed under extensions, it follows that $T \in \mathcal{FP}-Proj \cap ^\bot \mathcal{DI}$, that is, $T \in ^\bot w \mathcal{DI}$.\\
Form the pull back diagram\\

\[
\begin{diagram}
\node{}\node{0}\arrow{s}\node{0}\arrow{s}\\
\node{}\node{L}\arrow{s}\arrow{e,=}\node{L}\arrow{s}\\
\node{0}\arrow{e}\node{K}\arrow{s}\arrow{e}\node{T}\arrow{s}\arrow{e}\node{M}\arrow{s,=}\arrow{e}\node{0}\\
\node{0}\arrow{e}\node{D}\arrow{e}\node{X}\arrow{e}\node{M}\arrow{e}\node{0}
\end{diagram}
\]

The exact sequence $0 \rightarrow L \rightarrow K \rightarrow D \rightarrow 0$ with $L \in \mathcal{FI} \subseteq w\mathcal{DI} \subseteq (^\bot w\mathcal{DI})^\bot$ and $D \in \mathcal{DI} \subseteq w\mathcal{DI} \subseteq (^\bot w\mathcal{DI})^\bot$ gives that $K \in \subseteq (^\bot w\mathcal{DI})^\bot$.\\
The exact sequence $0 \rightarrow K \rightarrow T \rightarrow M \rightarrow 0$ with $T \in ^\bot w\mathcal{DI}$ and with $K \in \subseteq (^\bot w\mathcal{DI})^\bot$ shows that $T \rightarrow M$ is a special $ ^\bot w\mathcal{DI}$ precover.\\

Thus $(^\bot w \mathcal{DI}, (^\bot w\mathcal{DI})^\bot)$ is a complete hereditary cotorsion pair.\\

\end{proof}

\begin{lemma}
Let $R$ be a left coherent ring. Then $(^\bot w\mathcal{DI})^\bot$ is closed under direct summands and under cokernels of monomorphisms.
\end{lemma}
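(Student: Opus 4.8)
The plan is to read off both closure properties from the fact --- recorded in Proposition 4 and Theorem 3 --- that $(^\bot w\mathcal{DI}, (^\bot w\mathcal{DI})^\bot)$ is a hereditary cotorsion pair. Closure under direct summands is purely formal and uses nothing about $R$: if $M = M_1 \oplus M_2$ lies in $(^\bot w\mathcal{DI})^\bot$ and $T \in {}^\bot w\mathcal{DI}$, then $Ext^1(T,M_1) \oplus Ext^1(T,M_2) \cong Ext^1(T,M) = 0$, so $Ext^1(T,M_i)=0$ for $i=1,2$, and each $M_i$ again belongs to $(^\bot w\mathcal{DI})^\bot$.

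For closure under cokernels of monomorphisms I would start with a short exact sequence $0 \to A \to B \to C \to 0$ with $A, B \in (^\bot w\mathcal{DI})^\bot$ and fix an arbitrary $T \in {}^\bot w\mathcal{DI}$. Applying $Hom(T,-)$ gives the exact fragment $Ext^1(T,B) \to Ext^1(T,C) \to Ext^2(T,A)$; here $Ext^1(T,B)=0$ because $B \in (^\bot w\mathcal{DI})^\bot$, so the whole matter reduces to showing $Ext^2(T,A)=0$. This is precisely condition (3) of Definition 5 applied to the pair $(^\bot w\mathcal{DI}, (^\bot w\mathcal{DI})^\bot)$, which is hereditary; hence $Ext^1(T,C)=0$ for every $T \in {}^\bot w\mathcal{DI}$, i.e. $C \in (^\bot w\mathcal{DI})^\bot$. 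Equivalently, in a hereditary cotorsion pair the right-hand class is coresolving by condition (2) of Definition 5, and this is exactly the assertion.

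If one prefers an argument for $Ext^2(T,A)=0$ that does not simply quote the equivalence in Definition 5 --- in the spirit of the proof of Proposition 4 --- I would instead invoke Lemma 4 to produce an exact sequence $0 \to D \to F \to A \to 0$ with $D$ Ding injective and $F$ FP-injective. By Proposition 2, $T \in FP-Proj \cap {}^\bot\mathcal{DI}$; since $R$ is coherent, $\mathcal{FI}$ is closed under cokernels of monomorphisms, so $(FP-Proj, \mathcal{FI})$ is hereditary and $Ext^i(T,F)=0$ for all $i \ge 1$ (as in the proof of Proposition 4), while $Ext^i(T,D)=0$ for all $i \ge 1$ because $(^\bot\mathcal{DI}, \mathcal{DI})$ is hereditary (Theorem 1) and $T \in {}^\bot\mathcal{DI}$. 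A dimension shift along $0 \to D \to F \to A \to 0$ then yields $Ext^2(T,A) \cong Ext^3(T,D)=0$, and the argument concludes as before.

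I do not expect a genuine obstacle: the substance is entirely packaged in the hereditary cotorsion pair of Theorem 3. The one point that needs care is not to settle for the degree-one cotorsion-pair identity (which only gives that $(^\bot w\mathcal{DI})^\bot$ is closed under extensions) but to use the degree-two, hereditary information --- and this is exactly where left coherence of $R$ enters, through the good behaviour of $\mathcal{FI}$ and of the cotorsion pair $(FP-Proj, \mathcal{FI})$.
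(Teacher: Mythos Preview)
Your proposal is correct and follows essentially the same approach as the paper: both arguments reduce the lemma to the fact (Proposition 4) that $(^\bot w\mathcal{DI}, (^\bot w\mathcal{DI})^\bot)$ is a hereditary cotorsion pair, from which closure under direct summands and under cokernels of monomorphisms (i.e.\ coresolving, Definition 5(2)) follow immediately. The paper's proof is a two-line citation of this fact, while you spell out the $Ext$-computations and even give an alternative dimension-shift via Lemma 4; this extra detail is fine but not needed.
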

\begin{proof}
Since $(^\bot w\mathcal{DI})^\bot$ is the right half of a hereditary cotorsion pair (by Proposition 4) it follows that $w\mathcal{DI}$ is closed under cokernels of monomorphisms. Also, as the right half of a cotorsion pair, $w\mathcal{DI}$ is closed under direct summands.
\end{proof}

We can prove now that (for a coherent ring $R$) the class  $(^\bot w\mathcal{DI})^\bot$ is special precovering if and only if it is closed under direct limits. In particular,  $(^\bot w\mathcal{DI})^\bot$ is covering if and only if it is closed under direct limits. The proof uses the following result (\cite{amuc25}, Proposition 3):\\
\begin{proposition}
Let $\mathcal{W}$ be a class of modules that is closed under direct summands, under taking cokernels of pure monomorphisms, and under pure transfinite extensions. Then $\mathcal{W}$ is closed under direct limits.
\end{proposition}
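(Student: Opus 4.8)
The plan is to reduce the statement to two structural facts plus a transfinite bootstrap, staying inside the three given closure properties; write $\mathcal{W}$ for the class. The first fact I would record is that $\mathcal{W}$ is closed under arbitrary direct sums: given $(W_i)_{i\in I}$ in $\mathcal{W}$, well-order $I=\{i_\alpha:\alpha<\lambda\}$ and put $P_\alpha=\bigoplus_{\beta<\alpha}W_{i_\beta}$; then $(P_\alpha)_{\alpha\le\lambda}$ is a continuous chain of direct summands, hence pure submodules, of $\bigoplus_i W_i$ with $P_{\alpha+1}/P_\alpha\cong W_{i_\alpha}\in\mathcal{W}$, so $\bigoplus_i W_i$ is a pure transfinite extension of $\mathcal{W}$ and lies in $\mathcal{W}$. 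The second fact is closure under colimits of countable chains: for $M_0\to M_1\to\cdots$ in $\mathcal{W}$ the mapping telescope gives a short exact sequence
\[
0\longrightarrow\bigoplus_{n}M_n\xrightarrow{\;1-\sigma\;}\bigoplus_{n}M_n\longrightarrow\varinjlim_{n}M_n\longrightarrow 0,
\]
with $\sigma$ induced by the transition maps, and this sequence is pure because applying $-\otimes A$ produces the telescope of $(M_n\otimes A)_n$, which is again exact (here $-\otimes A$ commutes with $\bigoplus$ and with $\varinjlim$). By the first fact the middle term is in $\mathcal{W}$, so $\varinjlim_n M_n$, a cokernel of a pure monomorphism, lies in $\mathcal{W}$.

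I would then handle arbitrary direct limits by transfinite induction on $|I|$. Let $(M_i,f_{ij})_{i\in I}$ be a direct system over a directed poset with all $M_i\in\mathcal{W}$. If $I$ is finite the colimit is a term; if $I$ is countable it has a cofinal chain of order type $\omega$, and the second fact applies. For infinite $I$ a standard closing-off argument (close finite sets under adjoining upper bounds of pairs) expresses $I$ as an increasing continuous union $I=\bigcup_{\alpha<\lambda}I_\alpha$ of directed subsets with $|I_\alpha|<|I|$; then $\varinjlim_{i\in I}M_i\cong\varinjlim_{\alpha<\lambda}D_\alpha$, where each $D_\alpha:=\varinjlim_{i\in I_\alpha}M_i$ is in $\mathcal{W}$ by the induction hypothesis and $(D_\alpha)_{\alpha<\lambda}$ is a continuous chain. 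So the whole problem reduces to showing that the colimit of a continuous chain of modules in $\mathcal{W}$ is in $\mathcal{W}$, and for that I would run a second induction, on the ordinal length $\lambda$: a successor $\lambda$ is trivial; if $\operatorname{cf}(\lambda)\le\omega$ one passes to a cofinal subchain of type $\omega$ (resp.\ of type $\operatorname{cf}(\lambda)<\lambda$) and invokes the second fact (resp.\ the inductive hypothesis); the remaining case, $\lambda$ a regular uncountable cardinal, is the heart of the matter. There I would use the canonical pure exact sequence $0\to N\to\bigoplus_{\alpha<\lambda}D_\alpha\to\varinjlim_\alpha D_\alpha\to 0$ (the canonical surjection onto a directed colimit is a pure epimorphism) together with the observation that $N=\varinjlim_{\alpha<\lambda}N_\alpha$ by exactness of $\varinjlim$, where $N_\alpha:=\Ker\bigl(\bigoplus_{\beta\le\alpha}D_\beta\to D_\alpha\bigr)$ is a direct summand of the $\mathcal{W}$-module $\bigoplus_{\beta\le\alpha}D_\beta$ (the map admits the obvious section), hence itself in $\mathcal{W}$.

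The main obstacle is precisely this last case. The relation module $N$ is again exhibited as a colimit of a chain of length $\lambda$, so a naive induction is circular; for a countable chain one escapes this by checking outright that $N\cong\bigoplus_n M_n$, but the corresponding isomorphisms at limit ordinals are not compatible with the transition maps, so for regular uncountable $\lambda$ one must instead build $\varinjlim_\alpha D_\alpha$ directly as a pure transfinite extension --- filtering $\bigoplus_{\alpha<\lambda}D_\alpha$ and $N$ simultaneously along $\lambda$, in the style of the Hill lemma, so that the successive pure quotients are visibly in $\mathcal{W}$ --- and then apply closure under pure transfinite extensions and under cokernels of pure monomorphisms. Verifying that this simultaneous filtration remains pure at limit stages is the delicate homological point; everything else is bookkeeping.
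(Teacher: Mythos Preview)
The paper does not prove this proposition; it quotes it from another source (\cite{amuc25}, Proposition~3). That said, the argument implicit in the paper (and spelled out in the proof of Proposition~9 a few lines later) is the short one: first show $\mathcal{W}$ is closed under arbitrary direct sums, exactly as you do, and then observe that for \emph{any} directed system $(M_i)_{i\in I}$ the canonical surjection $\bigoplus_{i\in I}M_i\to\varinjlim_{i\in I}M_i$ is a pure epimorphism, so the colimit is a cokernel of a pure monomorphism between a module in $\mathcal{W}$ and something, hence lies in $\mathcal{W}$. That is the entire proof.

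You actually carry out this argument verbatim for countable chains (your ``second fact''), but then fail to notice that nothing about it is special to $\omega$: the purity of the canonical presentation $0\to K\to\bigoplus_i M_i\to\varinjlim_i M_i\to 0$ holds for every directed system, by the same reasoning you give (tensoring commutes with $\bigoplus$ and with directed $\varinjlim$). So your transfinite induction on $|I|$, the reduction to continuous chains, and the ``heart of the matter'' at regular uncountable $\lambda$ are all unnecessary detours. Worse, the detour leaves you with an acknowledged gap---the simultaneous Hill-type filtration whose purity at limit stages you have not verified---whereas the direct route has no such difficulty. The fix is not to patch the regular-uncountable case but to delete everything after your first two facts and replace the telescope sequence by the canonical presentation of an arbitrary directed colimit.
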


We also use the following result:\\

\begin{proposition}
If every $R$-module has a special $(^\bot w\mathcal{DI})^\bot$-precover,  then  the  class  $(^\bot w\mathcal{DI})^\bot$ is  closed  under transfinite extensions.
\end{proposition}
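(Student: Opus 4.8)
The plan is to run a transfinite induction on the length of the filtration, the only real work occurring at limit ordinals, and there to feed the hypothesis into the structural description of $(^\bot w\mathcal{DI})^\bot$ supplied by Lemma 4 and Proposition 3. So suppose $M$ carries a continuous filtration $0=M_0\subseteq M_1\subseteq\cdots\subseteq M_\alpha\subseteq\cdots\subseteq M_\lambda=M$ with $M_{\alpha+1}/M_\alpha\in(^\bot w\mathcal{DI})^\bot$ for all $\alpha<\lambda$, and induct on $\lambda$. If $\lambda=\beta+1$, then $M_\beta\in(^\bot w\mathcal{DI})^\bot$ by the induction hypothesis, and the exact sequence $0\to M_\beta\to M\to M/M_\beta\to 0$ together with closure of $(^\bot w\mathcal{DI})^\bot$ under extensions — it is the right-hand class of the cotorsion pair of Proposition 4 — yields $M\in(^\bot w\mathcal{DI})^\bot$. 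Thus everything reduces to the case where $\lambda$ is a limit ordinal and, by the induction hypothesis, every $M_\alpha$ with $\alpha<\lambda$ already lies in $(^\bot w\mathcal{DI})^\bot$, with $M=\bigcup_{\alpha<\lambda}M_\alpha$.

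In this limit case I would use the hypothesis to fix a special $(^\bot w\mathcal{DI})^\bot$-precover $0\to A\to B\xrightarrow{p}M\to 0$, with $B\in(^\bot w\mathcal{DI})^\bot$ and $A\in{}^\bot w\mathcal{DI}$, and pull the filtration back along $p$: the submodules $B_\alpha:=p^{-1}(M_\alpha)$ form a continuous chain with $B_0=A$, $B_\lambda=B$ and $B_{\alpha+1}/B_\alpha\cong M_{\alpha+1}/M_\alpha\in(^\bot w\mathcal{DI})^\bot$. The point of this is a reduction: \emph{it is enough to find some short exact sequence $0\to X\to Y\to M\to 0$ with $X$ and $Y$ both in $(^\bot w\mathcal{DI})^\bot$}, for then $M$ is a cokernel of a monomorphism between modules of $(^\bot w\mathcal{DI})^\bot$ and Lemma 5 finishes the job; equivalently, by Lemma 4 it suffices to realize $M$ in an exact sequence $0\to D\to F\to M\to 0$ with $D$ Ding injective and $F$ FP-injective. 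Since each layer $M_\alpha$ admits such a presentation $0\to D_\alpha\to F_\alpha\to M_\alpha\to 0$ (Lemma 4) and every FP-injective and every Ding injective module lies in $(^\bot w\mathcal{DI})^\bot$ (from the definitions, cf. Proposition 1), the task becomes: glue the presentations of the layers along the filtration into one for $M$.

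The mechanism is a horseshoe-type construction along the chain — direct sums of the $F_\alpha$ and $D_\alpha$ at successor stages, unions at limit stages — arranged so that $(F_\alpha)_{\alpha\le\lambda}$ is a continuous chain of FP-injective modules with FP-injective successive quotients, and $(D_\alpha)_{\alpha\le\lambda}$ a continuous chain of Ding injective modules. Here coherence of $R$ enters decisively: over a left coherent ring $\mathcal{FI}$ is closed under direct limits, so $F:=\varinjlim F_\alpha=F_\lambda$ is FP-injective, and passing to the limit of the chain gives $0\to D\to F\to M\to 0$ with $D:=\varinjlim D_\alpha$; a final appeal to Lemma 4 (or to Proposition 3 applied to the glued resolution, combined with Lemma 3) then places $M$ in $(^\bot w\mathcal{DI})^\bot$.

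The gluing step is where I expect the real obstacle to lie. Lifting the resolution of a layer $M_{\alpha+1}/M_\alpha$ across the extension $0\to M_\alpha\to M_{\alpha+1}\to M_{\alpha+1}/M_\alpha\to 0$ carries an $Ext^1$-obstruction valued in $M_\alpha$, which is only known to be in $(^\bot w\mathcal{DI})^\bot$ and \emph{not} a priori FP-projective, so the naive horseshoe does not close up and the free choices of resolutions must be replaced by a coherent system indexed along a Hill-lemma system of submodules of $M$ attached to the given $(^\bot w\mathcal{DI})^\bot$-filtration. It is precisely in making this transfinite recursion stabilize — keeping the accumulated kernel $D$ Ding injective (equivalently, keeping every intermediate syzygy inside $(^\bot w\mathcal{DI})^\bot$) across limit ordinals — that the hypothesis that every module has a special $(^\bot w\mathcal{DI})^\bot$-precover, together with the two-sided approximation machinery of Theorem 3, has to be used in full force. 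The bookkeeping of this recursion is the technical heart of the argument; the rest is the routine induction and the applications of Lemma 4 and Lemma 5 indicated above.
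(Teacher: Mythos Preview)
Your proposal is incomplete, and it misses the short argument the paper actually gives. You acknowledge yourself that the transfinite horseshoe gluing is ``where I expect the real obstacle to lie'' and leave it as unfinished ``bookkeeping''; that obstacle is real, and nothing in Lemmas~3--5 or Proposition~3 tells you that a direct limit of Ding injective kernels stays Ding injective, which is exactly what your limit step would need. So as written there is a genuine gap at the limit ordinal.

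The paper sidesteps all of this. Write $\mathcal{A}=({}^{\bot}w\mathcal{DI})^{\bot}$. If $M$ is a transfinite extension of modules in $\mathcal{A}$, then since $\mathcal{A}\subseteq{}^{\bot}(\mathcal{A}^{\bot})$, the Eklof Lemma gives $M\in{}^{\bot}(\mathcal{A}^{\bot})$ in one stroke---no induction on the filtration, no limit-ordinal analysis. Now invoke the hypothesis once: a special $\mathcal{A}$-precover yields an exact sequence $0\to A\to \overline{M}\to M\to 0$ with $\overline{M}\in\mathcal{A}$ and $A\in\mathcal{A}^{\bot}$ (surjectivity is secured because the direct sum of the filtration layers, which lies in $\mathcal{A}$ by closure under sums, maps onto $M$). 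Since $M\in{}^{\bot}(\mathcal{A}^{\bot})$ we have $Ext^{1}(M,A)=0$, the sequence splits, and $M$ is a direct summand of $\overline{M}\in\mathcal{A}$. Done. The structural Lemmas~3--5 and the Hill machinery play no role; the whole proof is Eklof plus one splitting.
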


\begin{proof}
For simplicity, let $\mathcal{A}$ denote the class  $(^\bot w\mathcal{DI})^\bot$ .\\
Let $ (G_{\alpha}, \alpha \le \lambda)$ be a direct system of monomorphisms with each $G_{\alpha} \in \mathcal{A}$ and let $G $ be the direct limit of the system.  Since for each $\alpha$, we have that $G_{\alpha} \in {}^\bot (\mathcal{A}^\bot)$, it follows that $G \in {}^\bot (\mathcal{A}^\bot)$ (by Eklof Lemma).  \\
The class $\mathcal{A}$ is precovering and closed under direct summands, so $\mathcal{A}$ is closed under arbitrary direct sums (\cite{trlifaj}, Lemma 9.14).  Thus $\oplus_{\alpha \le \lambda} G_{\alpha} \in \mathcal{A}$.\\
Since  there  is  a  short  exact  sequence  $0 \rightarrow K \rightarrow  \oplus_{\alpha \le \lambda} G_{\alpha} \rightarrow G \rightarrow 0$  with $\oplus_{\alpha \le \lambda} G_{\alpha} \in \mathcal{A}$, it follows that any $\mathcal{A}$-precover of $G$ has to be surjective.\\
The  class $\mathcal{A}$ is  special  precovering,  so  there  is  an  exact  sequence  $0 \rightarrow A \rightarrow \overline{G} \rightarrow G \rightarrow 0$ with $A \in \mathcal{A}^\bot$ and with $\overline{G} \in \mathcal{A}$.   But  $G \in {}^\bot (\mathcal{A}^\bot)$, so we have that $Ext^1(G;A) = 0$.  Thus $G$ is a direct summand of $\overline{G}$, therefore $G \in \mathcal{A}$.

\end{proof}

\begin{proposition}
Let $R$ be a left coherent ring. Then the class  $(^\bot w\mathcal{DI})^\bot$ is special precovering if and only if it is closed under direct limits.
\end{proposition}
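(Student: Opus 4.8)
Throughout write $\mathcal{B}:=({}^{\bot}w\mathcal{DI})^{\bot}$ and $\mathcal{A}:={}^{\bot}w\mathcal{DI}$, so that $(\mathcal{A},\mathcal{B})$ is the complete hereditary cotorsion pair of Theorem 3. I would prove the two implications separately.

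For the forward implication, suppose $\mathcal{B}$ is special precovering, so every $R$-module has a special $\mathcal{B}$-precover. Then Proposition 6 gives that $\mathcal{B}$ is closed under transfinite extensions, hence in particular under pure transfinite extensions. Lemma 5 gives that $\mathcal{B}$ is closed under direct summands and under cokernels of monomorphisms, hence in particular under cokernels of pure monomorphisms. Applying Proposition 5 with $\mathcal{W}=\mathcal{B}$ then yields that $\mathcal{B}$ is closed under direct limits. This direction is just a matter of assembling the quoted results.

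For the converse, assume $\mathcal{B}$ is closed under direct limits. The strategy is to show that $(\mathcal{B},\mathcal{B}^{\bot})$ is a complete cotorsion pair; once that is known, $\mathcal{B}$ is special precovering, since the left-hand class of a complete cotorsion pair always is (completeness provides, for each $M$, an exact sequence $0\to C\to B\to M\to 0$ with $B\in\mathcal{B}$ and $C\in\mathcal{B}^{\bot}$, which is a special $\mathcal{B}$-precover). To build this cotorsion pair I would argue that $\mathcal{B}$ is deconstructible. The starting point is Lemma 4: every $M\in\mathcal{B}$ sits in a short exact sequence $0\to D\to F\to M\to 0$ with $D$ Ding injective and $F$ FP-injective. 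Over the left coherent ring $R$ the class $\mathcal{FI}$ of FP-injective (absolutely pure) modules is a Kaplansky class, and so is the class $\mathcal{DI}$ of Ding injective modules (the cotorsion pair $({}^{\bot}\mathcal{DI},\mathcal{DI})$ of Theorem 1 is generated by a set). Running the Kaplansky data for $\mathcal{FI}$ on $F$ and the Kaplansky data for $\mathcal{DI}$ on $D$ simultaneously along such a presentation of $M$ shows that $\mathcal{B}$ is itself a Kaplansky class. Being also closed under cokernels of monomorphisms (Lemma 5), under extensions (as the right-hand class of the hereditary cotorsion pair of Theorem 3), and, by hypothesis, under direct limits --- hence under transfinite extensions --- $\mathcal{B}$ is deconstructible by the standard filtration argument. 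Fixing a set $\mathcal{S}\subseteq\mathcal{B}$ such that every module in $\mathcal{B}$ is an $\mathcal{S}$-filtered module, Eklof's lemma gives $\mathcal{S}^{\bot}=\mathcal{B}^{\bot}$ and ${}^{\bot}(\mathcal{S}^{\bot})=\mathcal{B}$ (using that $\mathcal{B}$ is closed under summands and transfinite extensions), so by the Eklof--Trlifaj theorem the cotorsion pair $(\mathcal{B},\mathcal{B}^{\bot})$ generated by $\mathcal{S}$ is complete. Hence $\mathcal{B}$ is special precovering.

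The forward implication is bookkeeping; the content is in the converse, and within it the delicate point is the deconstructibility of $\mathcal{B}$ --- concretely, extracting the Kaplansky property of $\mathcal{B}$ from the two-term presentations of Lemma 4. Given $M\in\mathcal{B}$ with $0\to D\to F\to M\to 0$ as above and an element $x\in M$, one must choose a $\kappa$-generated submodule $F'\subseteq F$ whose image in $M$ contains $x$ and which is adapted simultaneously to the Kaplansky structure of $F$ over $\mathcal{FI}$ and to that of $D$ over $\mathcal{DI}$, so that $F'\cap D\in\mathcal{DI}$, $F/F'\in\mathcal{FI}$, and the induced sequence $0\to D/(F'\cap D)\to F/F'\to M/F''\to 0$ (with $F''$ the image of $F'$ in $M$) again exhibits $M/F''$ as a member of $\mathcal{B}$ via Lemma 4; this is a standard but fiddly back-and-forth between the two families of Kaplansky data. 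An alternative that bypasses it is to verify that $\mathcal{B}$ is a \emph{definable} class --- it is closed under products (right-hand class of a cotorsion pair) and, by hypothesis, under direct limits, so the only remaining point is closure under pure submodules --- and then invoke that definable classes are deconstructible.
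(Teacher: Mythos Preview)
Your forward implication (special precovering $\Rightarrow$ closed under direct limits) matches the paper exactly: it is the same assembly of Proposition 6, Lemma 5, and Proposition 5.

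The converse, however, contains a genuine gap. You assert that $\mathcal{DI}$ is a Kaplansky class on the grounds that $({}^{\bot}\mathcal{DI},\mathcal{DI})$ is generated by a set, but this reasoning is backwards: set-generation of a cotorsion pair makes the \emph{left} class deconstructible (summands of filtered modules), not the right one. You therefore have no justification for the Kaplansky property of $\mathcal{DI}$ over an arbitrary left coherent ring, and without it the back-and-forth you sketch on the two-term presentations of Lemma 4 cannot get started. Your alternative route via definability correctly isolates closure under pure submodules as the missing ingredient, but you give no argument for it, and ``products plus direct limits'' alone does not suffice.

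The paper's proof of this direction is in fact very close to your alternative, but it fills precisely that gap by invoking the theorem of Saroch--Stovicek [39]: the right-hand class of a complete hereditary cotorsion pair which is closed under direct limits is automatically \emph{definable}. This deep result delivers closure under pure submodules for free. Combined with closure under cokernels of monomorphisms (Lemma 5), one gets closure under pure quotients; then the Holm--J{\o}rgensen criterion (closed under pure quotients and direct sums $\Rightarrow$ covering) yields that $\mathcal{B}$ is covering, hence special precovering. The paper never needs deconstructibility of $\mathcal{B}$ at all --- the single citation to [39] replaces your entire Kaplansky argument.
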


\begin{proof}
Assume that $(^\bot w\mathcal{DI})^\bot$ is closed under direct limits. Since  $(^\bot w\mathcal{DI})^\bot$ is the right half of a complete hereditary cotorsion pair, it follows that it is a definable class ([39]). Thus  $(^\bot w\mathcal{DI})^\bot$ is closed under pure submodules. But then, since  $(^\bot w\mathcal{DI})^\bot$ is also closed under cokernels of monomorphisms, it follows that  $(^\bot w\mathcal{DI})^\bot$ is closed under pure quotients as well. By \cite{purity}, a class of modules that is closed under pure quotients and direct sums is a covering class. Thus $(^\bot w\mathcal{DI})^\bot$ is covering. Since $(^\bot w\mathcal{DI})^\bot$ is covering and closed under direct summands, it is a special precovering class.\\
Conversely, assume that $(^\bot w\mathcal{DI})^\bot$ is a special precovering class. Since  $(^\bot w\mathcal{DI})^\bot$ is closed under transfinite extensions (by Proposition 6), and it is also closed under direct summands, under cokernels of monomorphisms, and under pure transfinite extensions, it follows (Proposition 5) that  $(^\bot w\mathcal{DI})^\bot$ is closed under direct limits.
\end{proof}

 \begin{corollary} The class $(^\bot w\mathcal{DI})^\bot$ is covering if and only if it is closed
 under direct limits.
\end{corollary}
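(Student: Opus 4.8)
The plan is to deduce the Corollary from Proposition 7, which already records that $(^\bot w\mathcal{DI})^\bot$ is special precovering if and only if it is closed under direct limits. Writing $\A = (^\bot w\mathcal{DI})^\bot$ for brevity, it then suffices to interpolate the property of being covering between these two conditions, i.e.\ to show that $\A$ covering implies $\A$ closed under direct limits, and conversely that $\A$ closed under direct limits implies $\A$ covering.

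The second implication needs nothing new. If $\A$ is closed under direct limits, then, exactly as in the proof of Proposition 7, $\A$ is a definable class, hence closed under pure submodules; combined with closure under cokernels of monomorphisms this gives closure under pure quotients; and a class closed under pure quotients and under direct sums is covering. So I would simply cite that portion of the proof of Proposition 7.

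For the first implication I would revisit the proof of Proposition 6. Its only use of the hypothesis ``special precovering'' is to produce, for a module $G$ that is the direct limit of a direct system of monomorphisms $(G_\alpha, \alpha \le \lambda)$ with each $G_\alpha \in \A$, a short exact sequence $0 \to A \to \overline{G} \to G \to 0$ with $A \in \A^\bot$ and $\overline{G} \in \A$. Assume only that $\A$ is covering. Then $\A$ is precovering, and since it is also closed under direct summands (by the lemma recording that $(^\bot w\mathcal{DI})^\bot$ is closed under direct summands and cokernels of monomorphisms) it is closed under arbitrary direct sums (\cite{trlifaj}, Lemma 9.14). Hence $\bigoplus_{\alpha \le \lambda} G_\alpha \in \A$ and the canonical epimorphism $\bigoplus_{\alpha \le \lambda} G_\alpha \to G$ forces every $\A$-precover of $G$ to be surjective; in particular the $\A$-cover $\phi : \overline{G} \to G$ is surjective. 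Since $\A$ is the right-hand class of the hereditary cotorsion pair of Proposition 4, it is closed under extensions, so Wakamatsu's Lemma applies to the surjective cover $\phi$ and yields $\Ker\phi \in \A^\bot$. From here the argument of Proposition 6 runs verbatim: each $G_\alpha$ lies in ${}^\bot(\A^\bot)$, hence $G \in {}^\bot(\A^\bot)$ by the Eklof Lemma, so $Ext^1(G, \Ker\phi) = 0$, the sequence $0 \to \Ker\phi \to \overline{G} \to G \to 0$ splits, and $G$ is a direct summand of $\overline{G} \in \A$, whence $G \in \A$. Thus $\A$ is closed under transfinite extensions, in particular under pure transfinite extensions; being also closed under direct summands and under cokernels of (pure) monomorphisms, it is closed under direct limits by Proposition 5.

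The main obstacle is the potential non-surjectivity of an arbitrary $\A$-cover: $\A$ contains all injective modules but no projective generator, so there is no a priori epimorphism onto a general module from an object of $\A$, and one cannot invoke Wakamatsu's Lemma for arbitrary modules. This is precisely why I would route the argument through Proposition 6 rather than try to prove ``$\A$ covering $\Rightarrow$ $\A$ special precovering'' outright: one only needs special $\A$-precovers of modules that are epimorphic images of objects of $\A$, namely the direct limits of monomorphisms, and for exactly those modules surjectivity of the cover is automatic. Everything else --- closure of $\A$ under direct summands and cokernels of monomorphisms, and the Eklof/transfinite-extension bookkeeping --- is already available from earlier results in the paper.
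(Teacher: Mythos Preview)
Your argument is correct, and the underlying mechanism (Wakamatsu's Lemma plus the Eklof/transfinite-extension bookkeeping of Propositions 5 and 6) is exactly what the paper uses too. The paper's proof is simply terser: it asserts that since $(^\bot w\mathcal{DI})^\bot$ is covering and closed under direct summands it is special precovering, and then invokes Proposition 7. That shortcut really rests on Wakamatsu's Lemma (the class is closed under \emph{extensions}, being the right half of a cotorsion pair), together with the paper's own definition of ``special precover,'' which only demands $\Ker\phi\in\mathcal{C}^\bot$ and not surjectivity; under that convention Proposition 6 supplies surjectivity for the relevant modules $G$ anyway. Your version unpacks this: you rerun Proposition 6 directly under the covering hypothesis and apply Wakamatsu only where the cover is already known to be surjective, so the argument stands even under the more common convention that special precovers are epimorphic. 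The paper's route is shorter but leans on its particular definition and leaves the Wakamatsu step implicit (and says ``direct summands'' where ``extensions'' is what is needed); yours is more self-contained and would read correctly to someone using the standard definition of special precover.
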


 \begin{proof} If $(^\bot w\mathcal{DI})^\bot$ is closed under direct limits then, by the proof of
 Proposition 7, it is covering.\\
 Conversely, assume that $(^\bot w\mathcal{DI})^\bot$ is covering. Since $(^\bot w\mathcal{DI})^\bot$ is closed
 under direct dummands, it is also special precovering. By Proposition
 7, it is closed under direct limits.
\end{proof}

\begin{proposition}
Let $R$ be a coherent ring. If the character module of any $_RM \in (^\bot w\mathcal{DI})^\bot$ is a Gorenstein flat right $R$-module, then $((^\bot w\mathcal{DI})^\bot, \mathcal{GF})$ is a duality pair.
\end{proposition}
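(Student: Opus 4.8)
The plan is to verify the two defining conditions of a duality pair for $((^\bot w\mathcal{DI})^\bot, \mathcal{GF})$ directly. Condition~2 is routine: $\mathcal{GF}$ is obviously closed under finite direct sums (a finite direct sum of $F$-totally acyclic complexes is $F$-totally acyclic), and it is a well-known fact that $\mathcal{GF}$ is closed under direct summands. One half of condition~1, namely the implication $_RM \in (^\bot w\mathcal{DI})^\bot \Rightarrow M^+ \in \mathcal{GF}$, is exactly the standing hypothesis. So the only thing to prove is the converse: if $M^+$ is a Gorenstein flat right $R$-module, then $M \in (^\bot w\mathcal{DI})^\bot$; note that, if this goes through, it in fact uses only the left coherence of $R$ and not the hypothesis.

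For the converse, since $M^+ \in \mathcal{GF}$ I would fix an $F$-totally acyclic complex $\mathbf{F}$ of flat right $R$-modules having $M^+$ as a cycle, and then apply the character functor $(-)^+$. Exactness of $(-)^+$ makes $\mathbf{F}^+$ an exact complex, its terms are injective left $R$-modules because the character module of a flat module is injective, and $M^{++}$ appears as one of the cycles of $\mathbf{F}^+$. The crucial point is that $\mathbf{F}^+$ is moreover $Hom(\mathcal{FI},-)$-exact: for an FP-injective left module $A$ the tensor--hom adjunction gives $Hom_R(A,\mathbf{F}^+)\cong(\mathbf{F}\otimes_R A)^+$, so it suffices that $\mathbf{F}\otimes_R A$ be exact, and this holds because $R$ is left coherent. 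Granting this, $\mathbf{F}^+$ exhibits $M^{++}$ as a Ding injective module.

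It then remains to descend from $M^{++}$ to $M$. We have $M^{++}\in\mathcal{DI}\subseteq w\mathcal{DI}\subseteq(^\bot w\mathcal{DI})^\bot$. By Theorem~3 the pair $(^\bot w\mathcal{DI},(^\bot w\mathcal{DI})^\bot)$ is a complete hereditary cotorsion pair, so its right-hand class $(^\bot w\mathcal{DI})^\bot$ is a definable class (as already used in the proof of Proposition~7), hence closed under pure submodules. Since the canonical map $M\to M^{++}$ is a pure monomorphism, it follows that $M\in(^\bot w\mathcal{DI})^\bot$, which completes condition~1 and the proof.

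The main obstacle is the claim that an $F$-totally acyclic complex $\mathbf{F}$ remains exact after tensoring with every FP-injective module $A$, over a left coherent ring: by definition $F$-total acyclicity only requires exactness of $\mathbf{F}\otimes I$ for $I$ injective, and promoting this to all FP-injective modules is the one place where coherence is genuinely used. This is a known feature of coherent rings --- essentially the coincidence of the Gorenstein flat modules with the cycles of flat complexes that are acyclic against all FP-injective modules --- and it can be established by embedding $A$ purely in its injective envelope $E$, observing that $E/A$ is again FP-injective over a coherent ring (by \cite{kathy}, Proposition~4.2), and analysing the homology of $\mathbf{F}\otimes-$ applied to the resulting pure-exact sequence. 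Everything else in the argument --- injectivity of the character of a flat module, exactness of $(-)^+$, the tensor--hom adjunction, purity of $M\to M^{++}$, and definability of the right-hand class of a complete hereditary cotorsion pair --- is formal.
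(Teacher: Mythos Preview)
Your argument that $M^{++}$ is Ding injective is fine (and the sketch you give for the ``main obstacle'' --- that an $F$-totally acyclic complex stays exact under $-\otimes A$ for $A$ FP-injective over a left coherent ring --- can indeed be made rigorous via the pure embedding $A\hookrightarrow E(A)$ together with a dimension shift along $\mathbf{F}$). The genuine gap is elsewhere: the descent from $M^{++}$ to $M$.

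You claim that $(^\bot w\mathcal{DI})^\bot$ is definable, hence closed under pure submodules, ``as already used in the proof of Proposition~7''. But reread that proof: definability there is deduced from the \emph{additional hypothesis} that $(^\bot w\mathcal{DI})^\bot$ is closed under direct limits, not from completeness and heredity of the cotorsion pair alone. The right class of a complete hereditary cotorsion pair need not be closed under pure submodules in general; for instance $(\mathrm{Mod}\text{-}R,\mathrm{Inj})$ is always complete and hereditary, yet over a non-noetherian ring a direct sum of injectives is a pure submodule of their (injective) product without being injective. In the present paper the closure of $(^\bot w\mathcal{DI})^\bot$ under pure submodules is obtained only in Proposition~9, and that proposition \emph{uses} Proposition~8. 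So invoking it here is circular. This also explains why your converse cannot be hypothesis-free as you suggest: the hypothesis really is needed.

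The paper avoids this by arguing differently. Given $K^+\in\mathcal{GF}$, it takes a special $^\bot\mathcal{DI}$-precover $0\to D\to L\to K\to 0$ with $D\in\mathcal{DI}$ and $L\in{}^\bot\mathcal{DI}$, dualises, and uses the hypothesis (applied to $D$) together with closure of $\mathcal{GF}$ under extensions to see $L^+\in\mathcal{GF}$. A separate orthogonality argument shows $L^+\in\mathcal{GF}^\bot$, whence $L^+$ is flat and, by coherence, $L$ is FP-injective. Lemma~4 then gives $K\in(^\bot w\mathcal{DI})^\bot$ directly, with no appeal to purity or definability.
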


\begin{proof}
Since the class of Gorenstein flat modules is closed under direct summands and under direct sums, it suffices to show that $K$ is weakly Ding injective if and only if its character module, $K^+$, is Gorenstein flat. One implication is a hypothesis we made on the ring.\\
Conversely, assume that $K^+$ is Gorenstein flat. The pair $(^\bot \mathcal{DI}, \mathcal{DI})$ is complete, so there is an exact sequence $0 \rightarrow D \rightarrow L \rightarrow K \rightarrow 0$ with $D$ Ding injective and with $L \in ^\bot \mathcal{DI}$. Since $Q/Z$ is an injective $Z$-module, the sequence $0 \rightarrow K^+ \rightarrow L^+ \rightarrow D^+ \rightarrow 0$ is exact. By hypothesis we have on the ring, $D^+$ is Gorenstein flat. Since $K^+$ is also Gorenstein flat, it follows that $L^+$ is Gorenstein flat. But $L^+ \in \mathcal{GF}^\bot$ (since for any Gorenstein flat module $Y$ we have $Ext^1(Y, L^+) \simeq Ext^1(L, Y^+)=0$ because $Y^+$ is Ding injective). So $L^+ \in \mathcal{GF} \cap \mathcal{GF}^\bot = Flat \cap Cotorsion$ (\cite{RIP}). Since $L^+$ is flat and $R$ is coherent, it follows that $L$ is FP-injective. Thus we have an exact sequence $0 \rightarrow D \rightarrow L \rightarrow K \rightarrow 0$ with $D$ Ding injective and with $L$ FP-injective.\\
Let $A \in ^\bot w\mathcal{DI} = \mathcal{FP}-Proj \cap ^\bot\mathcal{DI}$. The exact sequence $0 \rightarrow D \rightarrow L \rightarrow K \rightarrow 0$ gives an exact sequence $0 =Ext^1 (A,L) \rightarrow Ext^1(A,K) \rightarrow Ext^2(A, D)=0$ (since the pair $(^\bot \mathcal{DI}, \mathcal{DI})$ is hereditary). Thus $Ext^1(A,K) = 0$ for all $A \in ^\bot w\mathcal{DI}$, and therefore $K$ is in $(^\bot w\mathcal{DI})^\bot$.
\end{proof}

\begin{proposition}
Let $R$ be a left coherent ring. If the character module of any $_RM \in (^\bot w\mathcal{DI})^\bot$ is Gorenstein flat, then the class $(^\bot w\mathcal{DI})^\bot$ is closed under direct limits. In particular, $(^\bot w\mathcal{DI})^\bot$ is a covering class in this case.
\end{proposition}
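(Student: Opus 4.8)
The plan is to run everything through Proposition 8 and the closure results it unlocks. By Proposition 8 the hypothesis says exactly that $((^\bot w\mathcal{DI})^\bot,\mathcal{GF})$ is a duality pair; hence, by [24], Theorem 3.10, the class $\mathcal{A}:=(^\bot w\mathcal{DI})^\bot$ is closed under pure submodules, pure extensions and pure quotients. By Lemma 5 it is moreover closed under direct summands and under cokernels of monomorphisms, hence under cokernels of pure monomorphisms. With these inputs the whole statement reduces to the single assertion that $\mathcal{A}$ is closed under arbitrary direct sums: granting that, a directed colimit is a pure epimorphic image of the direct sum of its system, so $\mathcal{A}$ (being closed under direct sums and under pure quotients) is closed under direct limits, and then the fact that $\mathcal{A}$ is covering follows from Corollary 1 --- equivalently from the last clause of [24], Theorem 3.10. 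One may instead finish through Proposition 5 or Proposition 7, but each of these routes also comes down to closure under direct sums, since the limit steps of a pure transfinite extension are themselves directed unions.

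To establish closure of $\mathcal{A}$ under direct sums I would use the description of $\mathcal{A}$ furnished by Lemma 4. Given a family $(M_i)_{i\in I}$ in $\mathcal{A}$, choose for each $i$ a short exact sequence $0\to D_i\to F_i\to M_i\to 0$ with $D_i$ Ding injective and $F_i$ FP-injective, and form the direct sum
\[ 0\longrightarrow \bigoplus_i D_i \longrightarrow \bigoplus_i F_i \longrightarrow \bigoplus_i M_i \longrightarrow 0. \]
Since $R$ is left coherent, the class of FP-injective modules is closed under direct sums, so $\bigoplus_i F_i$ is FP-injective and therefore lies in $w\mathcal{DI}\subseteq\mathcal{A}$. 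If $\bigoplus_i D_i$ can be shown to lie in $\mathcal{A}$, then closure of $\mathcal{A}$ under cokernels of monomorphisms (Lemma 5) gives $\bigoplus_i M_i\in\mathcal{A}$, as required.

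The hard part, I expect, is precisely this last point: showing that a direct sum of Ding injective modules still belongs to $\mathcal{A}$ --- equivalently, that $w\mathcal{DI}$ is closed under direct sums --- over a coherent ring. Finite direct sums cause no trouble, since finite direct sums of injectives are injective and $Hom(F,-)$ commutes with them; but for infinite families the obvious move, splicing the $Hom(\mathcal{FI},-)$-exact complexes of injectives witnessing the $D_i$, yields an exact complex whose terms are only FP-injective (an infinite direct sum of injectives need not be injective over a non-Noetherian coherent ring), and such a complex need not remain $Hom(\mathcal{FI},-)$-exact, because $Hom(F,-)$ commutes with infinite direct sums only when $F$ is finitely generated whereas $\mathcal{FI}$ contains non-finitely-generated modules. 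Getting past this is where coherence of $R$, and in all likelihood the Gorenstein-flat hypothesis reintroduced through Proposition 8, must genuinely be used; a natural device is to dualise the sequences $0\to D_i\to F_i\to M_i\to 0$ (they remain exact since $\mathbb{Q}/\mathbb{Z}$ is injective over $\mathbb{Z}$), exploit that $\mathcal{GF}$ is closed under direct sums, and push the conclusion back through the equivalence $\mathcal{A}=\{N : N^+\in\mathcal{GF}\}$ of Proposition 8. Once closure of $\mathcal{A}$ under direct sums is secured, the remaining reductions above are routine.
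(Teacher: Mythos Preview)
Your overall architecture matches the paper's: invoke Proposition 8 to obtain the duality pair $((^\bot w\mathcal{DI})^\bot,\mathcal{GF})$, extract closure under pure submodules and pure quotients, and reduce everything to closure of $\mathcal{A}:=(^\bot w\mathcal{DI})^\bot$ under arbitrary direct sums. Where you diverge is exactly at that last step, and there you leave a real gap. Your proposed route through Lemma 4 stalls on showing $\bigoplus_i D_i\in\mathcal{A}$ for Ding injectives $D_i$, and the dualisation fix you sketch does not work as stated: passing to character modules turns direct sums into direct \emph{products}, so what you would need is $\prod_i D_i^+\in\mathcal{GF}$, i.e.\ closure of $\mathcal{GF}$ under products---precisely Theorem 4, whose proof in the paper \emph{uses} Proposition 9. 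The argument is circular.

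The paper sidesteps all of this with a one-line observation you have overlooked: $\mathcal{A}=(^\bot w\mathcal{DI})^\bot$ is a right Ext-orthogonal class, hence automatically closed under arbitrary direct \emph{products} (because $Ext^1(T,\prod_i M_i)\cong\prod_i Ext^1(T,M_i)$). Since the direct sum $\bigoplus_i M_i$ is always a pure submodule of the direct product $\prod_i M_i$, and you have already secured closure of $\mathcal{A}$ under pure submodules from the duality pair, closure under direct sums is immediate. No decomposition via Lemma 4, no analysis of sums of Ding injectives, and no appeal to the Gorenstein-flat hypothesis beyond Proposition 8 is required. After that, the paper finishes exactly as you do: direct limits are pure quotients of direct sums, and closure under direct sums plus pure quotients gives covering by \cite{purity}.
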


\begin{proof}
By Proposition 8, $((^\bot w\mathcal{DI})^\bot, \mathcal{GF})$ is a duality pair. Therefore $(^\bot w\mathcal{DI})^\bot$ is closed under pure submodules. Since $(^\bot w\mathcal{DI})^\bot$ is closed under direct products, and since the direct sum of a family of modules is a pure submodule of their direct product, it follows that $(^\bot w\mathcal{DI})^\bot$ is closed under direct sums.\\
By \cite{cotpairs}, Theorem 3.1, $(^\bot w\mathcal{DI})^\bot$ is also closed under pure quotients. Since for any directed system, the direct limit is a pure quotient of the direct sum of the module, we have that $(^\bot w\mathcal{DI})^\bot$ is closed under direct limits.\\
By \cite{purity}, Theorem 2, a class of modules that is closed under pure quotients and direct sums is a covering class. Thus $(^\bot w\mathcal{DI})^\bot$ is covering.
\end{proof}

\section{Gorenstein flat preenvelopes}

It is known that the class of Gorenstein flat modules, $\mathcal{GF}$, is a Kaplansky class. It is also known that a Kaplansky class is preenveloping if and only if is closed under direct products. We show that $\mathcal{GF}$ is closed under direct products over any left coherent ring $R$ such that the character of every module in the class $(^\bot w\mathcal{DI})^\bot$ is a Gorenstein flat right $R$-module.

\begin{theorem}
Let $R$ be a left coherent ring. If the character module of any $_RM \in (^\bot w\mathcal{DI})^\bot$ is a right Gorenstein flat module, then the class of Gorenstein flat right $R$-modules is closed under direct products.
\end{theorem}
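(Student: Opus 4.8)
The plan is to deduce the product-closure of $\mathcal{GF}$ from the duality pair $((^\bot w\mathcal{DI})^\bot,\mathcal{GF})$ of Proposition 8 together with the closure of $(^\bot w\mathcal{DI})^\bot$ under direct limits from Proposition 9, by a ``double dual and pure submodule'' argument. Fix a family $(G_i)_{i\in I}$ of Gorenstein flat right $R$-modules; the goal is to show $\prod_i G_i\in\mathcal{GF}$.

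First I would record two preparatory facts. \emph{(i)} The class $(^\bot w\mathcal{DI})^\bot$ is closed under arbitrary direct sums. Indeed, being the right-hand side of a cotorsion pair it is closed under extensions, and it is closed under direct limits by Proposition 9; since every direct sum is the direct limit of its finite subsums (each of which lies in the class by closure under extensions), the conclusion follows. \emph{(ii)} For every Gorenstein flat \emph{right} $R$-module $N$, the character module $N^+$ is a \emph{left} $R$-module lying in $(^\bot w\mathcal{DI})^\bot$. To see this, write $N=Z_0(\mathbf{F})$ for an $F$-totally acyclic complex $\mathbf{F}$ of flat right modules. Then $\mathbf{F}^+$ is an exact complex of injective left modules, and for every injective left module $J$ we have $Hom_R(J,\mathbf{F}^+)\cong(\mathbf{F}\otimes_R J)^+$, which is exact since $\mathbf{F}\otimes_R J$ is exact by $F$-total acyclicity; hence every cycle of $\mathbf{F}^+$ — in particular $N^+$ — is Gorenstein injective. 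One then upgrades ``Gorenstein injective'' to membership in $(^\bot w\mathcal{DI})^\bot$: using the natural isomorphism $Ext^1_R(A,N^+)\cong Tor_1^R(N,A)^+$, it suffices to show $Tor_1^R(N,A)=0$ for every $A\in{}^\bot w\mathcal{DI}=\mathcal{FP}\text{-}Proj\cap{}^\bot\mathcal{DI}$, which one obtains by dimension-shifting along a coresolution of $A$ by FP-injective modules — provided by the completeness of $(\mathcal{FP}\text{-}Proj,\mathcal{FI})$ and of $({}^\bot\mathcal{DI},\mathcal{DI})$ — and invoking the standing hypothesis on $R$. Granting \emph{(ii)}, $(\mathcal{GF},(^\bot w\mathcal{DI})^\bot)$ is itself a duality pair, so by [24], Theorem 3.10, $\mathcal{GF}$ is closed under pure submodules.

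With these in hand the argument concludes quickly. By \emph{(ii)} each $G_i^+\in(^\bot w\mathcal{DI})^\bot$, hence $\bigoplus_i G_i^+\in(^\bot w\mathcal{DI})^\bot$ by \emph{(i)}, and therefore Proposition 8 gives that $\big(\bigoplus_i G_i^+\big)^+=\prod_i G_i^{++}$ is a Gorenstein flat right $R$-module. The canonical evaluation map $\prod_i G_i\to\prod_i G_i^{++}$ is a product of the pure monomorphisms $G_i\hookrightarrow G_i^{++}$, hence is itself a pure monomorphism; thus $\prod_i G_i$ is a pure submodule of the Gorenstein flat module $\prod_i G_i^{++}$, and since $\mathcal{GF}$ is closed under pure submodules we conclude $\prod_i G_i\in\mathcal{GF}$.

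The step I expect to be the main obstacle is \emph{(ii)}: over an arbitrary left coherent ring one only gets that $N^+$ is Gorenstein injective, and the passage to $(^\bot w\mathcal{DI})^\bot$ — equivalently the vanishing of $Tor_1^R(N,A)$ for $A\in\mathcal{FP}\text{-}Proj\cap{}^\bot\mathcal{DI}$, which need not have finite injective dimension — seems genuinely to require the hypothesis that the character module of every module in $(^\bot w\mathcal{DI})^\bot$ is Gorenstein flat. A secondary point to pin down is the closure of $\mathcal{GF}$ under pure submodules over a left coherent ring, which I would derive from the duality pair $(\mathcal{GF},(^\bot w\mathcal{DI})^\bot)$ obtained in \emph{(ii)} rather than quote.
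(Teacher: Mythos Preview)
Your overall architecture matches the paper's: show $\bigoplus_i G_i^+\in(^\bot w\mathcal{DI})^\bot$, apply the hypothesis to get $\prod_i G_i^{++}\in\mathcal{GF}$, then descend along the pure embedding $\prod_i G_i\hookrightarrow\prod_i G_i^{++}$. The divergence is entirely in how you handle step \emph{(ii)} and the final pure-submodule step, and in both places you make the argument harder than it needs to be and leave a genuine gap.

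For \emph{(ii)}, the paper simply uses that over a left coherent ring the character module of a Gorenstein flat right module is \emph{Ding} injective, not merely Gorenstein injective; this immediately gives $G_i^+\in\mathcal{DI}\subseteq(^\bot w\mathcal{DI})^\bot$ with no further work. (The reason: for FP-injective $A$ one has $Hom(A,\mathbf{F}^+)\cong(\mathbf{F}\otimes A)^+\cong Hom(\mathbf{F},A^+)$, and $A^+$ is flat and pure-injective, hence flat cotorsion, hence in $\mathcal{GF}^\bot$; since every cycle of $\mathbf{F}$ is Gorenstein flat, $Hom(\mathbf{F},A^+)$ is exact.) Your proposed route---showing $Tor_1^R(N,A)=0$ for $A\in\mathcal{FP}\text{-}Proj\cap{}^\bot\mathcal{DI}$ by ``dimension-shifting along a coresolution of $A$ by FP-injective modules''---does not work as written: coresolutions shift $Tor$ in the wrong direction, and neither the FP-projectivity of the cosyzygies nor the standing hypothesis on $R$ gives the needed vanishing. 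You flag this yourself as the main obstacle; the point is that it disappears once you strengthen ``Gorenstein injective'' to ``Ding injective''.

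For the final step, you aim to deduce closure of $\mathcal{GF}$ under pure submodules from a duality pair $(\mathcal{GF},(^\bot w\mathcal{DI})^\bot)$, but you only argue the forward direction $M\in\mathcal{GF}\Rightarrow M^+\in(^\bot w\mathcal{DI})^\bot$; the converse is never established, so the duality-pair claim is incomplete. The paper avoids this entirely: from the pure embedding $\prod_i G_i\hookrightarrow\prod_i G_i^{++}$ it gets that $(\prod_i G_i)^+$ is a direct summand of the Ding injective module $(\prod_i G_i^{++})^+$, hence Ding injective, hence Gorenstein injective, and then invokes Holm's theorem (over a left coherent ring, $M^+$ Gorenstein injective implies $M$ Gorenstein flat) to conclude.
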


\begin{proof}
By Proposition 9, the class $(^\bot w\mathcal{DI})^\bot$ is covering in this case, so it is closed under direct sums.\\

Let $(G_i)$ be a family of Gorenstein flat right $R$-modules. Each $G_i ^+$ is a Ding injective module, hence weakly Ding injective, therefore in $(^\bot w\mathcal{DI})^\bot$. By the above, $\oplus_i G_i ^+$ is in $(^\bot w\mathcal{DI})^\bot$. By hypothesis, $(\oplus_i G_i ^+)^+$ is a Gorenstein flat module. Therefore $\prod G_i^{++} \simeq (\oplus_i G_i ^+)^+$ is Gorenstein flat.\\

For each $i \in I$ we have a pure exact sequence: $0 \rightarrow G_i \rightarrow G_i ^{++} \rightarrow Y_i \rightarrow 0$. Thus
we have an exact sequence $0 \rightarrow \prod_i G_i \rightarrow \prod_i G_i^{++} \rightarrow \prod_iY_i \rightarrow 0$.\\

We show that this sequence is also pure exact.\\

Let $A_R$ be a finitely presented $R$-module. Then for each $i$ the sequence $0 \rightarrow Hom(A, G_i) \rightarrow Hom(A, G_i^{++}) \rightarrow Hom(A, Y_i) \rightarrow 0$ is still exact. So we have an exact sequence\\

$$0 \rightarrow \prod_i Hom(A, G_i) \rightarrow \prod_i Hom(A, G_i^{++}) \rightarrow \prod_i Hom(A, Y_i) \rightarrow 0$$

Since $\prod_i Hom(A, G_i) \simeq Hom (A, \prod_i G_i)$, $Hom (A, \prod_i G_i^{++})$, and $\prod_i Hom(A, Y_i) \simeq Hom (A, \prod_i Y_i)$, it follows that, for any finitely presented module $A_R$, the sequence $0 \rightarrow Hom(A, \prod_i G_i) \rightarrow Hom (A, \prod_i G_i ^{++}) \rightarrow Hom (A, \prod_i Y_i) \rightarrow 0$ is still exact.\\

So $\prod_i G_i$ is a pure submodule of the Gorenstein flat module $\prod_i G_i^{++}$. Therefore $(\prod_i G_i)^+$ is a direct summand of the Ding injective module $(\prod_i G_i^{++})^+$, so $(\prod G_i)^+$ is Ding injective, and so, Gorenstein injective. By \cite{holgordim}, Theorem 3.6, since $R$ is left coherent, this implies that $\prod_i G_i$ is Gorenstein flat.
\end{proof}

\begin{corollary}
Let $R$ be a left coherent ring. If the character module of any $_RM \in (^\bot w\mathcal{DI})^\bot$ is a Gorenstein flat right $R$-module, then the class of Gorenstein flat right $R$-modules is preenveloping.
\end{corollary}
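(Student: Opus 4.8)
The plan is to reduce the statement to the closure of $\mathcal{GF}$ under arbitrary direct products, which is precisely the content of the preceding theorem. As recalled in the introduction, $\mathcal{GF}$ is a Kaplansky class over any ring, and for a Kaplansky class being preenveloping is equivalent to being closed under direct products (by [9], Theorem 2.5 and Remark 3). Hence the corollary will follow from that equivalence the moment closure under products is available.

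Accordingly, the first step is simply to invoke the preceding theorem: since $R$ is left coherent and, by hypothesis, the character module of every $_RM \in (^\bot w\mathcal{DI})^\bot$ is a Gorenstein flat right $R$-module, that theorem gives that $\mathcal{GF}$ is closed under direct products. The second and final step is to feed this into the Kaplansky-class criterion above, which then yields that $\mathcal{GF}$ is preenveloping; equivalently, every right $R$-module admits a Gorenstein flat preenvelope, and (as noted in the preliminaries) a right Gorenstein flat resolution.

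Since all the substantive work --- proving closure under direct products --- has already been carried out in the preceding theorem (where the genuine difficulties, namely exploiting the character-module hypothesis and the coherence of $R$ through the Gorenstein injective/Gorenstein flat duality, were dealt with), there is essentially no obstacle remaining here: the corollary is a one-line deduction from that theorem together with the standard Kaplansky-class machinery, and no new ideas are required. If one wanted a self-contained account of the last step, the only point deserving care is that the Kaplansky property, together with closure under direct products and under direct summands (the latter holding because $\mathcal{GF}$ sits as a half of a cotorsion pair), is exactly what powers the transfinite construction producing, for each $M_R$, a short exact sequence $0 \to M \to G \to L \to 0$ with $G \in \mathcal{GF}$, whence $M \to G$ is the sought preenvelope.
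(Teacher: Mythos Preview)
Your proposal is correct and matches the paper's approach exactly: the corollary is stated in the paper without proof, as an immediate consequence of the preceding theorem (closure of $\mathcal{GF}$ under direct products) together with the Kaplansky-class criterion for preenveloping recalled both in the introduction and at the start of Section~4. There is nothing to add.
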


\begin{proposition}
If $R$ is a Ding-Chen ring, then the class of Gorenstein flat right $R$-modules is preenveloping.
\end{proposition}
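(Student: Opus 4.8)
The plan is to deduce the statement from Corollary 2 together with two standard facts about a Ding-Chen ring $R$: that it is two-sided coherent, and that every injective (left) $R$-module has finite flat dimension. Since a Ding-Chen ring is in particular left coherent, Corollary 2 (the statement that $\mathcal{GF}$ is preenveloping as soon as the character module of every module in $({}^\bot w\mathcal{DI})^{\bot}$ is Gorenstein flat) reduces everything to proving the following more general assertion: \emph{if $R$ is left coherent and every injective left $R$-module has finite flat dimension, then $M^{+}=Hom_{\mathbb Z}(M,\mathbb Q/\mathbb Z)\in\mathcal{GF}$ for every $M\in({}^\bot w\mathcal{DI})^{\bot}$.} Establishing this, the Ding-Chen case follows by Corollary 2 (and, as a byproduct, Theorem 4 gives that $\mathcal{GF}$ is closed under direct products).

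First I would reduce to Ding injective modules. By Lemma 4 there is an exact sequence $0\to D\to F\to M\to 0$ with $D$ Ding injective and $F$ FP-injective. Applying the exact functor $(-)^{+}$ gives an exact sequence $0\to M^{+}\to F^{+}\to D^{+}\to 0$ of right $R$-modules. Since $F$ is FP-injective, $F^{+}$ is flat, hence Gorenstein flat. If $D^{+}$ is also Gorenstein flat, then, because $\mathcal{GF}$ is the left-hand class of a hereditary complete cotorsion pair — over any ring by [38], and classically over the (right) coherent ring $R$ — it is closed under kernels of epimorphisms, so $M^{+}\in\mathcal{GF}$. This reduces the problem to proving $D^{+}\in\mathcal{GF}$ for every Ding injective $D$.

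For that step, let $\mathbf E:\ \cdots\to E_{1}\to E_{0}\to E_{-1}\to\cdots$ be an exact, $Hom(\mathcal{FI},-)$-exact complex of injective left $R$-modules having $D$ as one of its cycles. Applying $(-)^{+}$ yields an exact complex $\mathbf E^{+}$ of right $R$-modules; each term is the character module of an FP-injective module, hence flat, and a short diagram chase identifies $D^{+}$ with a cycle of $\mathbf E^{+}$. It then remains to verify that $\mathbf E^{+}\otimes_{R}I$ is exact for every injective left $R$-module $I$, for then $\mathbf E^{+}$ is an F-totally acyclic complex exhibiting $D^{+}$ as Gorenstein flat. Here the finite-flat-dimension hypothesis enters: I would pick a finite flat resolution $0\to Q_{n}\to\cdots\to Q_{0}\to I\to 0$. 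Each $\mathbf E^{+}\otimes_{R}Q_{i}$ is exact because $Q_{i}$ is flat; splitting the resolution into short exact sequences and tensoring these with the flat modules appearing in $\mathbf E^{+}$ keeps them short exact, so one obtains short exact sequences of complexes, and the associated long exact homology sequences transport exactness from the $\mathbf E^{+}\otimes_{R}Q_{i}$ up to $\mathbf E^{+}\otimes_{R}I$ by dimension shifting.

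Combining the two reductions, $M^{+}\in\mathcal{GF}$ for every $M\in({}^\bot w\mathcal{DI})^{\bot}$, so by Corollary 2 the class of Gorenstein flat right $R$-modules is preenveloping. The routine parts of this plan (Lemma 4, exactness of $(-)^{+}$, identifying $D^{+}$ as a cycle, the dimension shifting) present no real difficulty; the two points carrying genuine content are the structural input that injective modules over a Ding-Chen ring have finite flat dimension, and the closure of $\mathcal{GF}$ (for right $R$-modules) under kernels of epimorphisms, which is precisely what permits the passage from Ding injective modules to all of $({}^\bot w\mathcal{DI})^{\bot}$. I expect this last closure property, together with keeping the left/right bookkeeping straight throughout, to be where care is most needed.
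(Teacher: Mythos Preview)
Your argument is correct. It differs from the paper's proof of this particular proposition but coincides with the route the paper takes a few results later for the more general statement.

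The paper's proof of Proposition 10 starts the same way (Lemma 4 gives $0\to D\to L\to M\to 0$), but instead of dualizing this sequence, it splices the injective left resolution of $D$ with an injective right resolution of $M$ to obtain a single exact complex $\textbf{F}$ of FP-injective modules having $M$ as a cycle; then $\textbf{F}^{+}$ is an exact complex of flat right modules with $M^{+}$ as a cycle, and the paper invokes Gillespie's result for Ding--Chen rings (\cite{gil19}) as a black box to conclude $M^{+}\in\mathcal{GF}$. Your approach avoids that citation by (i) reducing to $D^{+}\in\mathcal{GF}$ via the closure of $\mathcal{GF}$ under kernels of epimorphisms, and (ii) checking F-total acyclicity of $\textbf{E}^{+}$ by dimension shifting along a finite flat resolution of any injective $I$. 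This is exactly the content of the paper's subsequent Lemma 6, Proposition 13 and Proposition 11, which together yield Corollary 4 (the general left-coherent case with injectives of finite flat dimension). So what you have written is really a direct proof of that stronger corollary, specialized to Ding--Chen rings; it is slightly more self-contained, while the paper's version of Proposition 10 is shorter because it outsources the ``cycles of exact flat complexes are Gorenstein flat'' step to \cite{gil19}.
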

\begin{proof}
It suffices to show that $M^+ \in \mathcal{GF}$, for any $_RM \in (^\bot w\mathcal{DI})^\bot$.\\
Let $_RM \in (^\bot w\mathcal{DI})^\bot$. By Lemma 4, there is an exact sequence $0 \rightarrow D \rightarrow L \rightarrow M \rightarrow 0$ with $D$ Ding injective and with $L$ FP-injective. This implies that there is an exact and $Hom(\mathcal{FI},-)$ exact complex $\ldots \rightarrow E_2 \rightarrow E_1 \rightarrow L \rightarrow M \rightarrow 0$. By pasting this with a usual injective resolution of $M$, we obtain an exact complex of FP-injective modules $\textbf{F}$ such that $M = Z_0(\textbf{F})$.\\
Then $M^+ = Z_0 (F^+)$ with $F^+$ an exact complex of flat modules. By \cite{gil19}, $M^+$ is Gorenstein flat.\\
By Corollary 2, the class of Gorenstein flat right $R$-modules is preenveloping.
\end{proof}

We show that the condition that $M^+$ is Gorenstein flat for any $_RM \in (^\bot w\mathcal{DI})^\bot$ is equivalent to the condition that the character of any Ding injective left $R$-module being Gorenstein flat.

\begin{proposition}
Let $R$ be a left coherent ring. The character module of every $_RM \in (^\bot w\mathcal{DI})^\bot$ is a Gorenstein flat right $R$-module if and only if the character module of any Ding injective left $R$-module is Gorenstein flat.
\end{proposition}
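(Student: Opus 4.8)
The implication ``$\Rightarrow$'' requires no work: every Ding injective module is weakly Ding injective, and $w\mathcal{DI}\subseteq (^\bot w\mathcal{DI})^\bot$ since any class is contained in the right orthogonal of its left orthogonal; hence $\mathcal{DI}\subseteq (^\bot w\mathcal{DI})^\bot$, and if $N^+$ is Gorenstein flat for every $N\in (^\bot w\mathcal{DI})^\bot$ then a fortiori it is Gorenstein flat for every Ding injective $N$.

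For ``$\Leftarrow$'' assume $D^+$ is a Gorenstein flat right $R$-module for every Ding injective left $R$-module $D$, and fix $_RM\in (^\bot w\mathcal{DI})^\bot$. The plan is to first invoke Lemma 4 to get a short exact sequence $0\to D\to L\to M\to 0$ with $D$ Ding injective and $L$ FP-injective, and then to apply the exact functor $(-)^+=\Hom_{\mathbb Z}(-,\mathbb Q/\mathbb Z)$, obtaining an exact sequence of right $R$-modules
\[
0\longrightarrow M^+\longrightarrow L^+\longrightarrow D^+\longrightarrow 0 .
\]
Since $R$ is left coherent and $L$ is FP-injective, $L^+$ is a flat right $R$-module (over a left coherent ring a left module is FP-injective precisely when its character module is flat), so in particular $L^+\in\mathcal{GF}$; and $D^+\in\mathcal{GF}$ by hypothesis. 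It then remains to recognise $M^+$ as Gorenstein flat, and here I would use that, $R$ being left coherent, the class $\mathcal{GF}$ of Gorenstein flat right $R$-modules is closed under kernels of epimorphisms whose cokernel is Gorenstein flat (this is the assertion, due to Holm \cite{holgordim}, that $\mathcal{GF}$ is projectively resolving over a coherent ring). Applying this to the displayed sequence yields $M^+\in\mathcal{GF}$, which finishes the argument.

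I expect the genuine content --- and therefore the ``hard part'' --- to be exactly that last ingredient: that Gorenstein flatness for right modules is inherited by the kernel of an epimorphism between Gorenstein flat modules, which does use coherence of $R$. Everything else is bookkeeping around Lemma 4 and the exactness (and faithfulness) of $(-)^+$. A route that avoids citing the resolving property would try to splice an $F$-totally acyclic complex of flats with cycle $D^+$ together with the flat module $L^+$ so as to exhibit $M^+$ as a cycle of an $F$-totally acyclic complex of flat right $R$-modules; but arranging the tensor-exactness along the splice is more delicate and ultimately depends on the same closure property, so I would take the route through \cite{holgordim}.
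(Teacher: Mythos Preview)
Your proposal is correct and follows exactly the same route as the paper: the easy direction via $\mathcal{DI}\subseteq(^\bot w\mathcal{DI})^\bot$, and the converse via Lemma~4 to get $0\to D\to L\to M\to 0$, applying $(-)^+$, using coherence to see $L^+$ is flat, the hypothesis for $D^+$, and the closure of $\mathcal{GF}$ under kernels of epimorphisms. The paper's proof is essentially word-for-word the same, including the appeal to that last closure property.
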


\begin{proof}
One implication is immediate. We have $\mathcal{DI} \subseteq (^\bot w\mathcal{DI})^\bot$. Therefore, if the character module of every $_RM \in (^\bot w\mathcal{DI})^\bot$ is Gorenstein flat, then $D^+ \in \mathcal{GF}$, for any Ding injective module $D$.\\

Conversely, assume that $D^+ \in \mathcal{GF}$, for any Ding injective module $D$.\\
Let $_RM \in (^\bot w\mathcal{DI})^\bot$. By Lemma 4, there is an exact sequence $0 \rightarrow D \rightarrow F \rightarrow M \rightarrow 0$ with $D$ Ding injective, and with $F$ an FP-injective module. This gives the exact sequence $0 \rightarrow M^+ \rightarrow F^+ \rightarrow D^+ \rightarrow 0$. The ring is coherent, so $F^+$ is flat, therefore Gorenstein flat. By hypothesis $D^+$ is Gorenstein flat. Since $\mathcal{GF}$ is closed under kernels of epimorphisms, it follows that $M^+$ is Gorenstein flat.
\end{proof}

Using Proposition 11, we obtain:\\

\begin{proposition}
 Let $R$ be a left coherent ring. If the character module
 of any Ding injective module is Gorenstein flat, then the class $(^\bot w\mathcal{DI})^\bot$
 is closed under direct limits. In particular, $(^\bot w\mathcal{DI})^\bot$ is a covering class
 in this case.
\end{proposition}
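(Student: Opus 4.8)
The plan is to read the hypothesis off as one half of the equivalence in Proposition 11 and then feed the resulting statement into Proposition 9. Concretely, first I would observe that the assumption ``the character module of any Ding injective left $R$-module is Gorenstein flat'' is exactly the right-hand condition appearing in Proposition 11. Since $R$ is left coherent, Proposition 11 applies and yields the left-hand condition: the character module of every $_RM \in (^\bot w\mathcal{DI})^\bot$ is a Gorenstein flat right $R$-module.

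Next I would invoke Proposition 9, whose hypothesis is precisely the statement just obtained. Its conclusion is that $(^\bot w\mathcal{DI})^\bot$ is closed under direct limits and, in particular, is covering. (Internally, Proposition 9 reaches this by recognizing $((^\bot w\mathcal{DI})^\bot, \mathcal{GF})$ as a duality pair via Proposition 8, deducing closure under pure submodules, then closure under direct sums since the direct sum of a family is a pure submodule of its product, then closure under pure quotients, and finally applying the fact that a class closed under pure quotients and direct sums is covering; but none of that needs to be reproved here.) Pasting these two invocations together gives the proposition.

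I do not anticipate a genuine obstacle: the statement is a formal corollary of Propositions 9 and 11, both already established. The only point requiring a moment's care is checking that the hypotheses match exactly---in particular that ``left coherent'' is the running assumption in both of the cited propositions, which it is---so that the chaining is legitimate.
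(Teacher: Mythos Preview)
Your proposal is correct and matches the paper's own proof essentially verbatim: the paper's argument is the single line ``This follows from Proposition 9 and Proposition 11,'' which is exactly the chaining you describe. Your additional remarks unpacking the internals of Proposition 9 are accurate but, as you note, not needed for the present corollary.
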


\begin{proof}
 This follows from Proposition 9 and Proposition 11.
\end{proof}

\begin{theorem}
Let $R$ be a left coherent ring. If the character module of every Ding injective left $R$-module is Gorenstein flat, then the class of Gorenstein flat right $R$-modules is closed under direct products.
\end{theorem}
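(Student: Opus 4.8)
The plan is to reduce the statement to Theorem 4 by way of Proposition 11; at this point the theorem is essentially a repackaging of those two results. Theorem 4 already delivers the desired conclusion — $\mathcal{GF}$ closed under direct products — under the hypothesis that $M^+$ is Gorenstein flat for \emph{every} $M$ in $(^\bot w\mathcal{DI})^\bot$, which a priori is a stronger requirement than the one assumed here (that $D^+$ be Gorenstein flat only for Ding injective $D$). So the only thing left to do is to upgrade the present hypothesis to the hypothesis of Theorem 4.

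First I would invoke Proposition 11, which asserts precisely that over a left coherent ring the character module of every $M \in (^\bot w\mathcal{DI})^\bot$ is Gorenstein flat if and only if the character module of every Ding injective module is Gorenstein flat. Since $\mathcal{DI} \subseteq (^\bot w\mathcal{DI})^\bot$, one direction is trivial; the substantive direction uses Lemma 4 to embed an arbitrary $M \in (^\bot w\mathcal{DI})^\bot$ in a short exact sequence $0 \to D \to F \to M \to 0$ with $D$ Ding injective and $F$ FP-injective, dualizes to $0 \to M^+ \to F^+ \to D^+ \to 0$ with $F^+$ flat (here coherence of $R$ is used), and concludes $M^+ \in \mathcal{GF}$ from closure of $\mathcal{GF}$ under kernels of epimorphisms. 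Having thus established that $M^+ \in \mathcal{GF}$ for all $M \in (^\bot w\mathcal{DI})^\bot$, I would apply Theorem 4 and be done.

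There is no genuine obstacle at this level; the real content lies upstream. Proposition 11 rests on Lemma 4 (the structure of $(^\bot w\mathcal{DI})^\bot$) together with coherence, and Theorem 4 rests on the purity argument that $\prod_i G_i$ is a pure submodule of the Gorenstein flat module $\prod_i G_i^{++}$, so that $(\prod_i G_i)^+$ is a direct summand of the Ding injective — hence Gorenstein injective — module $(\prod_i G_i^{++})^+$, after which Holm's criterion over a left coherent ring (as used in the proof of Theorem 4) returns $\prod_i G_i$ to the Gorenstein flat class. Granting those two facts, the present theorem follows formally.
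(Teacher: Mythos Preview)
Your proposal is correct and matches the paper's own proof exactly: the paper's argument for this theorem is the single sentence ``The result follows from Theorem 4 and Proposition 11,'' which is precisely the reduction you describe. Your additional unpacking of the content of Proposition 11 (via Lemma 4 and coherence) and of Theorem 4 (via the purity argument and Holm's criterion) is accurate but goes beyond what the paper records at this point.
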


\begin{proof}
The result follows from Theorem 4 and Proposition 11.
\end{proof}

\begin{corollary}
Let $R$ be a left coherent ring. If the character module of every Ding injective left $R$-module is Gorenstein flat, then the class of Gorenstein flat right $R$-modules is preenveloping.
\end{corollary}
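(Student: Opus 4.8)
The plan is to obtain this statement by assembling results already proved; no genuinely new argument is required. First I would put the hypothesis into the form used earlier in the paper: by Proposition~11, over a left coherent ring the assumption that the character module of every Ding injective left $R$-module is Gorenstein flat is equivalent to the assumption that the character module of every $_RM \in (^\bot w\mathcal{DI})^\bot$ is a Gorenstein flat right $R$-module. So I may replace the stated hypothesis by this latter one.

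With that reformulation in hand, the conclusion is immediate in two equivalent ways, and I would use whichever reads more cleanly. One route is to cite Corollary~2 directly, since it asserts precisely that if the character module of every $_RM \in (^\bot w\mathcal{DI})^\bot$ is Gorenstein flat, then $\mathcal{GF}$ is preenveloping. The other route expands Corollary~2: by Theorem~4 (equivalently Theorem~5) the class $\mathcal{GF}$ of Gorenstein flat right $R$-modules is closed under direct products under this hypothesis; since $\mathcal{GF}$ is a Kaplansky class over any ring, and a Kaplansky class is preenveloping if and only if it is closed under direct products ([9], Theorem~2.5 and Remark~3), it follows that $\mathcal{GF}$ is preenveloping.

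Since every ingredient is a citation of an earlier statement, there is no real obstacle at this last step; the substantive work was done in Theorem~4 (closure of $\mathcal{GF}$ under direct products, via the duality pair $((^\bot w\mathcal{DI})^\bot, \mathcal{GF})$ of Proposition~8 together with the purity argument passing from $\prod G_i$ being a pure submodule of the Gorenstein flat module $\prod G_i^{++}$ to $(\prod G_i)^+$ being Ding injective, hence Gorenstein injective, and then applying Holm's coherent-ring criterion) and in Proposition~11 (which moves between Ding injective modules and the full class $(^\bot w\mathcal{DI})^\bot$ using the exact sequence $0 \to D \to F \to M \to 0$ with $D$ Ding injective and $F$ FP-injective). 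The only thing worth double-checking while writing it out is that the standing hypothesis ``$R$ left coherent'' is the one required by each of Theorem~4, Proposition~11 and the Kaplansky-class fact, which it is, so the chain of implications applies verbatim.
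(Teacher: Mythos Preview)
Your proposal is correct and matches the paper's own proof exactly: the paper simply writes ``The result follows from Corollary~2 and Proposition~11,'' which is precisely your first route (use Proposition~11 to pass from the Ding-injective hypothesis to the $(^\bot w\mathcal{DI})^\bot$ hypothesis, then invoke Corollary~2). Your second route and the surrounding commentary are accurate elaborations of what sits behind Corollary~2, but the core argument is identical.
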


\begin{proof}
The result follows from Corollary 2 and Proposition 11.
\end{proof}

 The next result generalizes Proposition 10. The proof uses the following
 lemma.\\

\begin{lemma} Let R be a left coherent ring such that every injective left R
module has finite flat dimension. Then a right R-module is Gorenstein
 flat if and only if it is a cycle of an exact complex of flat right R
modules.
\end{lemma}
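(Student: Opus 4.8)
The forward implication is immediate from Definition 3: an F-totally acyclic complex of flat right $R$-modules is by definition exact, so every Gorenstein flat right $R$-module is, in particular, a cycle of an exact complex of flat right $R$-modules.

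For the converse, the plan is to show that, under the hypothesis on $R$, \emph{every} exact complex of flat right $R$-modules is automatically F-totally acyclic; the conclusion then follows at once from Definition 3. So let $\mathbf{F} = \cdots \to F_1 \to F_0 \to F_{-1} \to \cdots$ be an exact complex of flat right $R$-modules, write $Z_k = Z_k(\mathbf{F})$ for its cycles, and fix an arbitrary injective left $R$-module $I$; one has to show that $\mathbf{F}\otimes I$ is exact. Set $n := \mathrm{fd}_R I$, which is finite by hypothesis, so $\Tor{j}{R}(-,I) = 0$ for every $j > n$.

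The main step is a dimension shift on the cycles. Exactness of $\mathbf{F}$ gives, for each $k$, a short exact sequence $0 \to Z_k \to F_k \to Z_{k-1} \to 0$, and since $F_k$ is flat the long exact sequence of $\Tor{\bullet}{R}(-,I)$ yields isomorphisms $\Tor{i}{R}(Z_k,I) \cong \Tor{i+1}{R}(Z_{k-1},I)$ for all $i \ge 1$. Iterating this isomorphism gives $\Tor{1}{R}(Z_k,I) \cong \Tor{n+1}{R}(Z_{k-n},I)$, which vanishes because $\mathrm{fd}_R I \le n$; hence $\Tor{1}{R}(Z_k,I) = 0$ for every $k$. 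Feeding this vanishing back into the long exact sequences shows that each $0 \to Z_k\otimes I \to F_k\otimes I \to Z_{k-1}\otimes I \to 0$ is exact, and splicing these short exact sequences recovers the complex $\mathbf{F}\otimes I$, which is therefore exact. Thus $\mathbf{F}$ is F-totally acyclic and every cycle of it --- in particular the given module --- is Gorenstein flat.

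I do not anticipate a real obstacle: the only points needing care are the index bookkeeping in the dimension shift, and checking that splicing the sequences $0 \to Z_k\otimes I \to F_k\otimes I \to Z_{k-1}\otimes I \to 0$ genuinely recovers $\mathbf{F}\otimes I$ with its own differentials, i.e. that inside $F_k\otimes I$ one has $\Im(F_{k+1}\otimes I \to F_k\otimes I) = Z_k\otimes I = \Ker(F_k\otimes I \to F_{k-1}\otimes I)$. This last identity follows from the right-exactness of $-\otimes I$ together with the injectivity of each $Z_j\otimes I \hookrightarrow F_j\otimes I$ obtained above. (Left coherence of $R$ plays no role in this particular argument; it is retained only as a standing hypothesis of the section.)
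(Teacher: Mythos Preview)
Your proof is correct. The paper takes a different but equally standard route: rather than dimension-shifting on the cycles via $Tor$, it argues by induction on $n = \mathrm{fd}_R\, I$. In the base case $I$ is flat and $\mathbf{F}\otimes I$ is trivially exact; for the inductive step one chooses a short exact sequence $0 \to L \to T \to I \to 0$ with $T$ flat and $\mathrm{fd}_R\, L = n-1$, so flatness of each $F_k$ yields a short exact sequence of complexes $0 \to \mathbf{F}\otimes L \to \mathbf{F}\otimes T \to \mathbf{F}\otimes I \to 0$, and exactness of the first two terms (by induction and by flatness of $T$) forces exactness of the third via the long exact sequence in homology. Your approach instead fixes $I$ once and shifts along the cycles of $\mathbf{F}$ to obtain $\Tor{1}{R}(Z_k,I) \cong \Tor{n+1}{R}(Z_{k-n},I) = 0$ in one stroke, then splices the resulting short exact sequences. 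The paper's version stays at the level of complexes and never mentions $Tor$ explicitly; yours is more pointwise and packages the finiteness hypothesis as a single vanishing statement rather than as an inductive invariant. Your observation that left coherence plays no role in this particular argument is accurate and applies equally to the paper's proof.
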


\begin{proof} One implication is immediate by the definition of a Gorenstein
 flat module.\\
 For the converse, assume that every injective left R-module I has finite
 flat dimension.
 Let $\textbf{F} = \ldots \rightarrow F_1 \rightarrow F_0 \rightarrow F_{-1} \rightarrow \ldots$ be an exact complex of flat right
 R-modules.
 We show that $\textbf{F}\otimes I$ is exact, for any injective $_RI$.\\

 Proof by induction on $n = flat.dim.I.$\\
 I. If n = 0 then I is flat, so $\textbf{F}\otimes I$ is exact.

 II. $n -1 \rightarrow  n$. There is an exact sequence $0 \rightarrow  L \rightarrow T \rightarrow I \rightarrow 0$ with
 $T$ flat and with $flat.dimL = n-1$.\\
 Since each $F_n$ is flat, we have an exact sequence $0 \rightarrow F_n \otimes L \rightarrow F_n \otimes T \rightarrow F_n \otimes I \rightarrow 0$. Therefore we have an exact sequence of
 complexes $0 \to \textbf{F}\otimes L \to \textbf{F}\otimes T \to\textbf{F}\otimes I \rightarrow 0$. The module $T$ is flat, so
 $\textbf{F}\otimes T$ is exact. By induction hypothesis, $\textbf{F}\otimes L$ is also exact. It follows
 that $\textbf{F}\otimes I$ is an exact complex.\\
 Thus the cycles of any exact complex of flat right R-modules are Gorenstein flat.

\end{proof}

\begin{proposition}
Let $R$ be a left coherent ring. If every injective left
 $R$-module has finite flat dimension, then the character module of any
 Ding injective left R-module is Gorenstein flat.
\end{proposition}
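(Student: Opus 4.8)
The plan is to run a complete injective resolution of the given Ding injective module through the character functor, observe that this produces an exact complex of flat right $R$-modules having $D^+$ among its cycles, and then invoke Lemma 6; the hypothesis that every injective left $R$-module has finite flat dimension enters only through that lemma.

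Concretely, let $D$ be a Ding injective left $R$-module, so that by definition there is an exact complex of injective left $R$-modules $\mathbf{E}=\cdots\to E_1\to E_0\to E_{-1}\to\cdots$ with $D=\Ker(E_0\to E_{-1})$; thus $D=\Im(E_1\to E_0)$ is a cycle of $\mathbf{E}$. Since the character functor $(-)^+$ is exact, $\mathbf{E}^+=\cdots\to E_{-1}^+\to E_0^+\to E_1^+\to\cdots$ is again an exact complex of right $R$-modules, and applying $(-)^+$ to $0\to D\to E_0\to E_0/D\to 0$ exhibits $D^+$ as a cycle of $\mathbf{E}^+$. Now $R$ is left coherent and each $E_i$ is injective, hence FP-injective, so each $E_i^+$ is a flat right $R$-module; this is the standard fact that over a left coherent ring the character module of an FP-injective left module is flat, and it is the same step used in the proof of the Ding-Chen case (Proposition 10). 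Therefore $D^+$ is a cycle of an exact complex of flat right $R$-modules, and Lemma 6 gives at once that $D^+$ is Gorenstein flat.

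I do not expect a genuine obstacle here: Lemma 6 has already absorbed the difficult point, namely that under the present hypothesis one need not separately verify the tensor-exactness condition from the definition of a Gorenstein flat module, and the only ingredient imported from outside is the classical coherence fact recalled above. It is worth noting that a Ding injective module is more convenient for this argument than an arbitrary object of $(^\bot w\mathcal{DI})^\bot$, since it already carries a doubly infinite injective resolution, so, in contrast with the proof of Proposition 10, there is no resolution to splice together by hand.
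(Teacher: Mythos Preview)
Your proof is correct and matches the paper's own argument essentially line for line: take a complete injective resolution $\mathbf{E}$ of $D$, apply $(-)^+$, note that $\mathbf{E}^+$ is an exact complex of flat right $R$-modules (using left coherence), and invoke Lemma~6. The only difference is that you make explicit the step ``$E_i$ injective $\Rightarrow$ $E_i^+$ flat over a left coherent ring,'' which the paper leaves implicit.
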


\begin{proof}
 Let $D$ be Ding injective. Then there is an exact complex $\textbf{E}$ of
 injective left $R$-modules, such that $D = Z_0(\textbf{E})$. Then $D^+ = Z_0(\textbf{E}^+)$. By
 Lemma 6, $D^+$ is Gorenstein flat.
\end{proof}

\begin{corollary} Let $R$ be a left coherent ring. If every injective left $R$-module has finite flat dimension, then the class of Gorenstein flat right
 R-modules, $\mathcal{GF}$, is preenveloping.
\end{corollary}

 \begin{proof} The result follows from Proposition 13 and Corollary 3.
\end{proof}

 Our next result uses the following:\\

\begin{lemma} Let $R$ be a left coherent ring, let $M$ be a right $R$-module. If $M^+$ is weakly Ding injective then $M$ is Gorenstein flat.
\end{lemma}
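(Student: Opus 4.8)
The plan is to exploit the defining property of $M^+$ being weakly Ding injective together with Proposition 1, which splits any weakly Ding injective module as an FP-injective summand plus a Ding injective summand. First I would write $M^+ = F \oplus D$ with $F$ FP-injective and $D$ Ding injective (using that $R$ is left coherent and invoking Proposition 1). Both summands behave well under the character functor when we pass back to a left module structure: since $R$ is left coherent, $F$ being FP-injective means $F^+$ is flat, and $D$ being Ding injective gives, by Proposition 13 combined with Lemma 6 — wait, more directly — $D$ is the cycle $Z_0(\textbf{E})$ of an exact (in fact $Hom(\mathcal{FI},-)$-exact) complex of injective left modules $\textbf{E}$. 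However $M$ is a \emph{right} module, so the cleanest route is to observe that $M$ is a pure submodule of $M^{++}$, and $M^{++}$ is a character module of the left module $M^+$; if I can show $M^{++}$ is Gorenstein flat then purity and the fact that $\mathcal{GF}$ is closed under pure submodules (this follows since $(\mathcal{GF},?)$ — actually closure of $\mathcal{GF}$ under pure submodules over a coherent ring is standard, cf. the use in Theorem 4) finishes the argument.

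So the key steps, in order, are: (1) Write $M^+ = F \oplus D$ with $F$ FP-injective, $D$ Ding injective, via Proposition 1. (2) Dualize: $M^{++} = F^+ \oplus D^+$. Since $R$ is left coherent and $F$ is FP-injective, $F^+$ is flat, hence Gorenstein flat. Since $D$ is Ding injective, $D = Z_0(\textbf{E})$ for an exact complex $\textbf{E}$ of injective left $R$-modules, so $D^+ = Z_0(\textbf{E}^+)$ is a cycle of an exact complex of flat right $R$-modules — this does not \emph{immediately} give Gorenstein flat without the finite-flat-dimension hypothesis, so instead I use that $D^+ \in \mathcal{GF}$ is exactly the content of Proposition 11 (the character of a Ding injective left module is Gorenstein flat) \emph{under that proposition's hypothesis} — but here we have no such hypothesis, so this is the obstacle. (3) The honest route avoiding any extra hypothesis: $D$ Ding injective $\Rightarrow$ $D$ is Gorenstein injective, and over a left coherent ring Holm's result ([22] / \cite{holgordim}, Theorem 3.6 as used in Theorem 4) gives that the character module of a Gorenstein injective left module... actually that theorem goes the other direction. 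The cleanest correct statement: $D$ Ding injective over left coherent $R$ means $D^+$ is Gorenstein flat — this is precisely \cite{gil19} applied as in Proposition 10, because $D = Z_0(\textbf{E})$ with $\textbf{E}$ exact of injectives, so $D^+ = Z_0(\textbf{E}^+)$ with $\textbf{E}^+$ exact of flats, and by \cite{gil19} such cycles are Gorenstein flat. Then $M^{++} = F^+ \oplus D^+$ is Gorenstein flat. (4) Conclude: $M$ is a pure submodule of $M^{++}$, $\mathcal{GF}$ is closed under pure submodules over the left coherent ring $R$, so $M$ is Gorenstein flat.

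The main obstacle is step (3): making rigorous that the character of a Ding injective left module over a left coherent ring is Gorenstein flat without invoking the finite-flat-dimension hypothesis of Proposition 13. I expect to resolve it by appealing to \cite{gil19} (as already done in the proof of Proposition 10), which identifies Gorenstein flat modules with cycles of exact complexes of flats in the relevant generality, applied to $\textbf{E}^+$; the $Hom(\mathcal{FI},-)$-exactness of $\textbf{E}$ is more than enough to guarantee the tensor-exactness conditions after dualizing. A secondary point to be careful about is the switch between left and right module structures: Proposition 1 is stated for left modules, and $M^+$ is indeed a left $R$-module when $M$ is a right $R$-module, so the hypothesis applies directly; and $M^{++}$ is then a right $R$-module, matching the codomain we need. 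Once these are in place the argument is short.
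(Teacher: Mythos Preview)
Your argument has a genuine gap at step (3), and it is precisely the crux of the matter. You want to conclude that for a Ding injective left module $D$ over a merely left coherent ring, the character module $D^+$ is Gorenstein flat. But this implication is \emph{not} available in general: in fact it is one of the central \emph{hypotheses} of the paper (see Theorem~5, Corollary~3, Proposition~11, Proposition~12, Theorem~6). Your appeal to \cite{gil19} as in Proposition~10 does not help, because that citation is made in the Ding--Chen setting; the statement ``every cycle of an exact complex of flat right modules is Gorenstein flat'' fails over general left coherent rings---this is exactly why Lemma~6 carries the extra hypothesis that every injective left module has finite flat dimension. Your claim that the $Hom(\mathcal{FI},-)$-exactness of $\textbf{E}$ ``is more than enough to guarantee the tensor-exactness conditions after dualizing'' is unjustified: $Hom(I,\textbf{E})$ exact for $I$ FP-injective does not obviously yield $\textbf{E}^+\otimes I$ exact, since the latter is equivalent to $Hom(I,\textbf{E}^{++})$ being exact, not $Hom(I,\textbf{E})$. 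If your argument worked, it would prove unconditionally over any left coherent ring that the character of every Ding injective is Gorenstein flat, collapsing most of Section~4 and Theorem~6.

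The paper's proof avoids this trap by working in the \emph{other} direction of the character-module correspondence, which \emph{is} valid over left coherent rings (Holm \cite{holgordim}, Theorem~3.6): namely, $N^+$ Gorenstein injective $\Rightarrow$ $N$ Gorenstein flat. Concretely, the paper takes a flat preenvelope $0\to M\to F_{-1}\to C_{-1}\to 0$, dualizes to get $0\to C_{-1}^+\to F_{-1}^+\to M^+\to 0$, and then uses the flat-preenvelope property together with Proposition~1 and a comparison between the special FP-injective precover $F_{-1}^+\to M^+$ and the FP-injective cover of $M^+$ to show that $C_{-1}^+$ is Ding injective, hence Gorenstein injective. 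Then $C_{-1}$ is Gorenstein flat by Holm's theorem, and $M$ is Gorenstein flat as the kernel of $F_{-1}\to C_{-1}$. The delicate point you are missing is that one must arrange to apply the character functor in the direction where the implication is known.
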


\begin{proof}
 Since $M^+ \in  w\mathcal{DI}$ there is an exact sequence $0 \rightarrow Z \rightarrow I \rightarrow M^+ \rightarrow 0$ with $I$ FP-injective. This gives an injective map $M^{++} \rightarrow I^+$ with $I^+$ flat. The canonical map $M \rightarrow M^{++}$ is also injective, so there is an injective morphism $M \rightarrow I^+$, with $I^+$ flat. It follows that any
 flat preenvelope $M \xrightarrow{\phi} F_{-1}$ is injective.\\
 Consider the exact sequence $0 \rightarrow M \xrightarrow{\phi} F_{-1} \rightarrow C_{-1} \rightarrow 0$ with $\phi$ a flat
 preenvelope. Then the sequence $0 \rightarrow C^+_{-1} \rightarrow F^+_{-1} \rightarrow M^+ \rightarrow 0$ is exact
 with $M^+$ weakly Ding injective and with $F^+ _{-1}$ injective.  We show that $C^+_{-1}\in \mathcal{ FI}^\bot$. This is equivalent to showing that, for any
 FP-injective module $T$, the sequence $Hom(T,F^+_{-1}) \rightarrow Hom(T,M^+) \rightarrow 0$ is exact (because $F^+ _{-1}$ is injective, so $Ext^1(T,F^+ _{-1}) = 0$).

 Consider the commutative diagram\\

\[
\begin{diagram}
\node{Hom(T,{F_{-1}}^+)}\arrow{s,r}{\simeq}\arrow{e,t}{l}\node{Hom(T,M^+)}\arrow{s,r}{\simeq}\\
\node{Hom(F_{-1}, T^+)}\arrow{e,t}{g}\node{Hom(M,T^+)}\arrow{e}\node{0}
\end{diagram}
\]\\

 Since $M \to F_{-1}$ is a flat preenvelope and $T^+$ is flat, we have that
 $Hom(F_{-1},T^+) \rightarrow  Hom(M,T^+) \rightarrow  0$ is exact.
 Thus the sequence $Hom(T,F^+_{-1}) \rightarrow  Hom(T,M^+) \rightarrow 0$ is exact. We
 also have an exact sequence $Hom(T,F^+_{ -1}) \rightarrow Hom(T,M^+) \rightarrow Ext^1(T,C^+ _{-1}) \rightarrow Ext^1(T,F^+_{-1}) = 0$ (since $F^+_{-1}$ is injective). It follows that $Ext^1(T,C^+_{-1}) =
 0$ for any FP-injective module T. Thus $ C^+_{-1}$ is in $\mathcal{FI}^\bot$, and therefore $F^+_{-1} \rightarrow  M^+$is a special FP-injective precover of $M^+$.\\

 By Proposition 1,  $M^+ = F \oplus K$ with $F$ FP-injective, and with $K$ Ding
 injective.\\
 Since $K$ is Ding injective, there is an exact sequence $0 \rightarrow K' \rightarrow E \rightarrow
 K \rightarrow 0$ with $E$ injective and with $K'$ Ding injective. So $E \rightarrow K$ is a
 special FP-injective precover of $K$. Therefore the FP-injective cover of
 K is of the form $V \rightarrow K$ where $V$ is a direct summand of $E$, so $V$ is
 injective.  Also, if $W = Ker(V \rightarrow K)$, then $W$ is a direct summand of
 $K'$, so it is Ding injective.\\
 Thus an FP-injective cover of $M^+$ is of the form $F \oplus V$ where $V \rightarrow K$
 is the FP-injective cover, $V$ is an injective module, and $W = Ker(V \rightarrow K$) is Ding injective.\\
 Since $0 \rightarrow W \rightarrow F \oplus V \rightarrow F \oplus K\rightarrow 0$ is exact, with $F \oplus V \rightarrow F \oplus K$
 the FP-injective cover, and $0 \rightarrow C^+_{-1} \rightarrow F^+_{-1} \rightarrow F \oplus K \rightarrow 0$ is exact
 with $F^+_{-1} \rightarrow F \oplus K$ a special FP-injective precover, it follows that
 $F^+_{-1} = F \oplus V \oplus U$, and $C^+_{-1} = W  \oplus U$. As a direct summand of the
 injective module $F^+_{-1}$,  $U$ is injective.\\
 Thus $C^+_{-1} = W  \oplus U$ is Ding injective, therefore Gorenstein injective.
 Since $R$ is coherent, it follows ([23]) that $C_{-1}$ is Gorenstein flat.
 The exact sequence $0 \rightarrow M \xrightarrow{\phi} F_{-1} \rightarrow C_{-1} \rightarrow 0$  with $F_{-1 }$ flat and with
 $C_{-1}$  Gorenstein flat gives that M is Gorenstein flat.

\end{proof}

\begin{proposition}
 Let $R$ be a coherent ring. The following statements
 are equivalent:\\
 (1) $w\mathcal{DI} = (^\bot w\mathcal{DI})^\bot$.\\
 (2) The class of weakly Ding injective modules is closed under extensions.
\end{proposition}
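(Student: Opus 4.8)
The plan is to settle $(1)\Rightarrow(2)$ in one line and put all the work into $(2)\Rightarrow(1)$.

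For $(1)\Rightarrow(2)$: by Proposition~4 the pair $({}^{\bot}w\mathcal{DI},({}^{\bot}w\mathcal{DI})^{\bot})$ is a (hereditary) cotorsion pair, so its right-hand class $({}^{\bot}w\mathcal{DI})^{\bot}$ is automatically closed under extensions; if $w\mathcal{DI}=({}^{\bot}w\mathcal{DI})^{\bot}$, this is exactly statement~(2). Nothing else is needed.

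For $(2)\Rightarrow(1)$: the inclusion $w\mathcal{DI}\subseteq({}^{\bot}w\mathcal{DI})^{\bot}$ always holds, so I only have to prove $({}^{\bot}w\mathcal{DI})^{\bot}\subseteq w\mathcal{DI}$ under hypothesis~(2). Fix $M\in({}^{\bot}w\mathcal{DI})^{\bot}$. By Lemma~4 there is an exact sequence $0\to D\to F\to M\to 0$ with $D$ Ding injective and $F$ FP-injective, and since $D$ is Ding injective there is an exact sequence $0\to D\to E\to D'\to 0$ with $E$ injective and $D'$ Ding injective. Forming the pushout of $F\leftarrow D\to E$ (the pushout of a monomorphism is a monomorphism with the same cokernel) produces a module $P$ sitting in two exact sequences $0\to F\to P\to D'\to 0$ and $0\to E\to P\to M\to 0$. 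The first presents $P$ as an extension of $D'\in w\mathcal{DI}$ by $F\in w\mathcal{DI}$, so by~(2) $P\in w\mathcal{DI}$; the second splits because $E$ is injective, so $P\cong E\oplus M$. Thus $M$ is a direct summand of the weakly Ding injective module $P$, with injective complement sitting in a split exact sequence.

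It remains to deduce $M\in w\mathcal{DI}$. Since $P\in w\mathcal{DI}$, it is a cycle of an exact and $Hom(\mathcal{FI},-)$-exact complex of FP-injective modules, so there is an exact and $Hom(\mathcal{FI},-)$-exact complex $0\to P\to G^0\to G^1\to\cdots$ with each $G^i$ FP-injective and every cosyzygy in $w\mathcal{DI}$; write $N:=G^0/P$, so $N\in w\mathcal{DI}$ and $0\to N\to G^1\to G^2\to\cdots$ is again exact and $Hom(\mathcal{FI},-)$-exact. Regarding $E$ as a summand of $P\subseteq G^0$ and using that $E$ is injective, extend the projection $P\to E$ to a retraction $G^0\to E$; then $G^0\cong E\oplus G'$ with $M\subseteq G'$ and $G'/M\cong G^0/P=N$. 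Because $E\subseteq P=\ker(G^0\to N)$, the map $G^0\to N$ kills $E$ and factors through $G'\cong G^0/E$, and since $Hom(T,G^0)\to Hom(T,N)$ is surjective for every FP-injective $T$, so is $Hom(T,G')\to Hom(T,N)$. Splicing $0\to M\to G'\to N\to 0$ onto $0\to N\to G^1\to G^2\to\cdots$ yields an exact and $Hom(\mathcal{FI},-)$-exact complex $0\to M\to G'\to G^1\to G^2\to\cdots$ of FP-injective modules. On the other side, Proposition~3 furnishes an exact and $Hom(\mathcal{FI},-)$-exact complex $\cdots\to F_1\to F_0\to M\to 0$ of FP-injective modules. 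Pasting the two along $F_0\twoheadrightarrow M\hookrightarrow G'$ gives a doubly infinite exact complex of FP-injective modules having $M$ as a cycle, which after the $Hom(\mathcal{FI},-)$-exactness check exhibits $M$ as weakly Ding injective; hence $({}^{\bot}w\mathcal{DI})^{\bot}\subseteq w\mathcal{DI}$.

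The step I expect to be delicate is the final $Hom(\mathcal{FI},-)$-exactness verification at the two seams: at $F_0$ one must use that $Hom(T,F_0)\to Hom(T,M)$ is already surjective because $\cdots\to F_1\to F_0\to M\to 0$ is $Hom(\mathcal{FI},-)$-exact, and at $G'$ one must use the surjectivity of $Hom(T,G')\to Hom(T,N)$ inherited through the retraction from the resolution of $P$. The pushout computation, the splitting off of the injective summand, and the attendant summand bookkeeping are routine.
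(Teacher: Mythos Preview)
Your proof is correct and follows essentially the same route as the paper's: both obtain the exact sequence $0\to F\to E\oplus M\to D'\to 0$ (you via pushout, the paper via a mapping cone over the comparison map of the two short exact sequences), apply hypothesis~(2) to conclude $E\oplus M\in w\mathcal{DI}$, then peel off the injective summand $E$ to get a $Hom(\mathcal{FI},-)$-exact sequence $0\to M\to G'\to N\to 0$ with $G'$ FP-injective and $N\in w\mathcal{DI}$ (you via a retraction splitting $G^0$, the paper via the pushout along $E\hookrightarrow A^0$), and finally splice with Proposition~3. The two arguments differ only in the diagram-chasing vocabulary, not in substance.
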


\begin{proof}
 ``(1) $\Rightarrow$ (2)'' A right orthogonal class is always closed under extensions.\\

 ``(2) $\Rightarrow$ (1)'' Let $M \in (^\bot w\mathcal{DI})^\bot$. Then there is an exact sequence
 $0 \rightarrow D'  \rightarrow F \rightarrow M \rightarrow 0$ with $F$ an FP-injective module, and with $D'$
 Ding injective (by Lemma 4).\\
 There is also an exact sequence $0 \rightarrow D' \rightarrow E \rightarrow D \rightarrow 0$ with $E$ injective and with $D'$ Ding injective. Since $E$ is injective, we have a commutative diagram with exact rows and columns:\\

\[
\begin{diagram}
\node{0}\arrow{e}\node{D'}\arrow{s,=}\arrow{e}\node{F}\arrow{s,r}{u}\arrow{e,t}{l}\node{M}\arrow{s,r}{h}\arrow{e}\node{0}\\
\node{0}\arrow{e}\node{D'}\arrow{e}\node{E}\arrow{e,t}{g}\node{D}\arrow{e}\node{0}
\end{diagram}
\]\\

 Thus we have a map of complexes $f : U \rightarrow V$, where $U = 0 \rightarrow D' \rightarrow F \rightarrow M \rightarrow 0$, and $V =0 \rightarrow D'\rightarrow E \rightarrow D\rightarrow 0$. Therefore we have an
 exact sequence of complexes $0 \rightarrow V \rightarrow c(f) \rightarrow  U[1] \rightarrow 0$.
 The complex $ c(f)$ has the exact subcomplex
 $\overline{D'} =0\rightarrow D' \xrightarrow{1_{D'}} D' \rightarrow 0$.  It follows that the complex $c(f)/{\overline{D'}} = 0 \to F \rightarrow E \oplus M \rightarrow D \rightarrow 0$ is
 also exact.\\
 Both $F$ and $D$ are weakly Ding injective modules, and $0 \rightarrow F \rightarrow E \oplus M \rightarrow D \rightarrow 0$ is exact. By (2), the module $M \oplus$ E is also weakly
 Ding injective. Thus there is an exact and $Hom(\mathcal{FI},-)$ exact sequence
$ 0 \rightarrow M \oplus E \rightarrow A^0\rightarrow W \rightarrow 0$ with $A^0$ FP-injective and with $W$ weakly Ding injective.

Consider the pushout diagram:\\

\[
\begin{diagram}
\node{}\node{}\node{0}\arrow{s}\node{0}\arrow{s}\\
\node{0}\arrow{e}\node{E}\arrow{s,=}\arrow{e}\node{M \oplus E}\arrow{s}\arrow{e}\node{M}\arrow{s}\arrow{e}\node{0}\\
\node{0}\arrow{e}\node{E}\arrow{e}\node{A^0}\arrow{s}\arrow{e}\node{C}\arrow{s}\arrow{e}\node{0}\\
\node{}\node{}\node{W}\arrow{s}\arrow{e,=}\node{W}\arrow{s}\\
\node{}\node{}\node{0}\node{0}
\end{diagram}
\]

So we have an exact sequence of exact complexes $0 \rightarrow X \rightarrow  Y \rightarrow Z \rightarrow 0$, where $X$, $Y$ and $Z$ are the columns in the diagram above:\\
 $X = 0 \rightarrow E \xrightarrow{1_E}  E \rightarrow 0$, $Y = 0 \rightarrow M \oplus E \rightarrow A^0 \rightarrow W \rightarrow 0$, and
 $Z =0 \rightarrow M \rightarrow C\rightarrow W \rightarrow 0$.\\
 Since $0 \rightarrow E \rightarrow A^0 \rightarrow C \rightarrow 0$ is exact with $E$ injective and $A^0$ FP-injective, it follows that $C$ is FP-injective.\\
 Since $E$ is injective, each row of the commutative diagram above is split exact. Therefore we have that, for every FP-injective module $I$, the sequence $0 \rightarrow Hom(I,X_n) \rightarrow  Hom(I,Y_n) \rightarrow  Hom(I,Z_n) \rightarrow 0$ is exact, for all n. Thus we have an exact sequence of complexes $0 \rightarrow Hom(I,X) \rightarrow Hom(I,Y) \rightarrow Hom(I,Z) \rightarrow 0$. Both $Hom(I,X)$ and $Hom(I,Y)$ are exact complexes, so $Hom(I,Z)$ is also exact, i.e. the complex $0 \rightarrow M \rightarrow C \rightarrow W \rightarrow 0$ is exact and $Hom(\mathcal{FI},-)$ exact.\\
 Since $W$ is weakly Ding injective there is an exact and $Hom(\mathcal{FI},-) $ exact complex $0 \rightarrow W \rightarrow A^1 \rightarrow A^2 \rightarrow \ldots$, with all $A^i$ FP-injective
 modules. So the complex $0 \rightarrow M \rightarrow C \rightarrow A^1 \rightarrow \ldots$ is exact and
 $Hom(\mathcal{FI},-)$ exact. Since $M \in (^\bot w\mathcal{DI})^\bot$, there is also an exact and $Hom(\mathcal{FI},-)$ exact complex $\ldots \rightarrow A^1 \rightarrow A^0 \rightarrow M \rightarrow 0$, with all $A^j$
 FP-injective modules (by Proposition 3). Pasting them together we  obtain an exact and $Hom(\mathcal{FI},-)$ exact complex of FP-injective modules A, with $M = Z_0(A)$. Thus $(^\bot w\mathcal{DI})^\bot\subseteq  w\mathcal{DI}$. The other inclusion
 always holds, so we have that $w\mathcal{DI} = (^\bot w\mathcal{DI})^\bot$.

\end{proof}

\begin{theorem}
Let R be a left coherent ring such that the class of weakly Ding injective modules is closed under extensions. The following statements are equivalent:\\
 1. The character module of every weakly Ding injective $R$-module is a Gorenstein flat right $R$-module.\\
 2. The character module of every Ding injective $R$-module is a Gorenstein flat right $R$-module.\\
 3. $(w\mathcal{DI},\mathcal{GF})$ is a duality pair.\\
 4. the class of weakly Ding injective modules is closed under direct limits.\\
 5. The class of weakly Ding injective modules is covering.\\

 The equivalent statements (1)- (5) also imply the following statement:\\
 6. The class of Gorenstein flat right R-modules is preenveloping.
\end{theorem}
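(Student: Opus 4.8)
The plan is to collapse the whole list onto the cotorsion pair $(^\bot w\mathcal{DI}, (^\bot w\mathcal{DI})^\bot)$ and then quote Propositions 8, 9 and 11 together with Corollary 1. Since $R$ is left coherent and $w\mathcal{DI}$ is closed under extensions, Proposition 14 gives $w\mathcal{DI} = (^\bot w\mathcal{DI})^\bot$. Under this identification statement (1) is precisely the hypothesis ``the character module of every module in $(^\bot w\mathcal{DI})^\bot$ is Gorenstein flat'', so Proposition 11 yields $(1) \Leftrightarrow (2)$ at once. I would invoke Proposition 8 to see that (1) forces $((^\bot w\mathcal{DI})^\bot, \mathcal{GF}) = (w\mathcal{DI}, \mathcal{GF})$ to be a duality pair, that is (3); and since the defining property of a duality pair reads $M \in w\mathcal{DI}$ if and only if $M^+ \in \mathcal{GF}$, statement (3) immediately gives back statement (1). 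In the same way Proposition 9 (or Proposition 12) shows that (1) makes $w\mathcal{DI} = (^\bot w\mathcal{DI})^\bot$ closed under direct limits, that is (4), and Corollary 1 gives $(4) \Leftrightarrow (5)$.

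It remains to close the circle, which I would do through $(4) \Rightarrow (2)$; this is the one step I expect to need more than a citation. The idea is that once $w\mathcal{DI} = (^\bot w\mathcal{DI})^\bot$ is closed under direct limits, it is a definable class: being the right-hand class of the complete hereditary cotorsion pair of Theorem 3, this is exactly the sub-argument already used inside the proof of Proposition 7. A definable class is closed under the double character module operation --- a module lies in it if and only if its double dual does --- so for any Ding injective $D$ we have $D \in \mathcal{DI} \subseteq w\mathcal{DI}$, hence $D^{++} \in w\mathcal{DI}$. Applying Lemma 7 to the right $R$-module $D^+$, whose character module $D^{++}$ is weakly Ding injective, we conclude that $D^+$ is Gorenstein flat. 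This is statement (2), so (1)--(5) are all equivalent.

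Finally, the equivalent statements imply (6): statement (2) is verbatim the hypothesis of Corollary 3, whose conclusion is that the class of Gorenstein flat right $R$-modules is preenveloping.

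The main obstacle is the implication $(4) \Rightarrow (2)$. It rests on two facts that go slightly beyond routine bookkeeping: first, that closure of $w\mathcal{DI}$ under direct limits upgrades it to a definable class (which leans on the completeness and heredity of $(^\bot w\mathcal{DI}, (^\bot w\mathcal{DI})^\bot)$ from Theorem 3, exactly as in the proof of Proposition 7), and second, that definable classes are stable under double duals, which is the bridge letting Lemma 7 turn ``$D^{++}$ is weakly Ding injective'' into ``$D^+$ is Gorenstein flat''. Every other arrow is an immediate consequence of an already-proved result once Proposition 14 is available.
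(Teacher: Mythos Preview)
Your proposal is correct and follows essentially the same route as the paper. You use Proposition 14 to identify $w\mathcal{DI}$ with $(^\bot w\mathcal{DI})^\bot$ and then quote Propositions 8, 9, 11 and Corollary 1 exactly as the paper does; the only cosmetic difference is that you close the loop via $(4)\Rightarrow(2)$ while the paper closes it via $(4)\Rightarrow(3)$ (really $(4)\Rightarrow(1)$), but the substance of that step --- definability of $w\mathcal{DI}$ from Theorem 3, stability under the double character module, and Lemma 7 --- is identical, and your citation of Corollary 3 for $(2)\Rightarrow(6)$ is equivalent to the paper's appeal to Theorem 5.
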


\begin{proof}
 1 $\Leftrightarrow$ 2 is Proposition 11 (because, by Proposition 14, $(^\bot w\mathcal{DI})^\bot =
 w\mathcal{DI}$ in this case).\\
 1 $\Rightarrow$3 is Proposition 8 (since, by Proposition 14,  $(^\bot w\mathcal{DI})^\bot =
 w\mathcal{DI}$ in this case).\\

 3 $\Rightarrow$ 1 by the definition of a duality pair.\\
 3 $\Rightarrow$ 4 is Proposition 9 (since, by Proposition 14, $(^\bot w\mathcal{DI})^\bot =
 w\mathcal{DI}$ in this case).\\

 4 $\Leftrightarrow$ 5 is Corollary 1 (since, by Proposition 14,  $(^\bot w\mathcal{DI})^\bot =
 w\mathcal{DI}$ in this case).\\

 4 $\Rightarrow$ 3. Since $ w\mathcal{DI}$ is the right half of a complete hereditary pair (by
 Propositions 4 and 14), and since it is also closed under direct limits, it is a definable class ([39]). By Lemma 1.1 ([12]), we have that a module
 $M$ is weakly Ding injective if and only if $M^{++}$ is weakly Ding injective.\\
 By Lemma 7, $(M^+)^+$ being weakly Ding injective implies that $M^+$ is Gorenstein flat.\\
 2 $\Rightarrow$ 6 is Theorem 5.
\end{proof}




\begin{thebibliography}{1}

\bibitem{bennis:07:stronglygorenstein}
D.~Bennis and N.~Mahdou.
\newblock {Strongly Gorenstein projective, injective, and flat modules}.
\newblock {\em J. Pure Apl. Algebra}, 210:437--445, 2007

\bibitem{BGH}
D. Bravo and J. Gillespie and M. Hovey.
\newblock{The stable module category of a general
 ring}. available at arXiv:1405.5768

\bibitem{CFH}
L. Christensen and A. Frankild and H. Holm.
\newblock{On Gorenstein projective, injec
tive, and flat modules. A functorial description with applications}
\newblock{ \em J. Algebra},
 302(1):231–279, 2006.

\bibitem{chen-zhang}
J. Chen, X. Zhang.
\newblock{Coherent rings and FP-injective rings}
\newblock{Science Press}, 2014.

\bibitem{CH}
L. Christensen, H. Holm.
\newblock The direct limit closure of perfect complexes.
\newblock{ \em Pure
 Appl. Alg.}, 219(3): 449-463, 2015.

\bibitem{mao-ding}
L Mao, and N. Ding,
\newblock Gorenstein FP-injective and Gorenstein flat modules.
\newblock{\em Algebra
 Appl.}, 07(04):491-506, 2008.

\bibitem{MD}
L. Mao, and N. Ding
\newblock{Relative FP-projective modules}.
\newblock{\em Com. Algebra}, 33:1587-1602,
 2005.

\bibitem {DingLiMao}
N. Ding, Y. Li, and L. Mao.
\newblock Strongly Gorenstein Flat Modules.
\newblock{J. Aust. Math.
 Soc.}, 86: 323-338, 2009.

\bibitem{EJT}
E.E. Enochs, O.M.G. Jenda and B. Torrecillas.
\newblock Gorenstein flat modules.
\newblock{\em J.
 Nanjing Univ.} 10-19, 1994.

 \bibitem {Kaplansky}
 E.E. Enochs, J.A. Lopez-Ramos.
 \newblock Kaplansky classess.
 \newblock{\em Rend. Sem. Mat. Univ.
 Padova} (107), 10-19, 2002.

 \bibitem{gorenstein}
 E.E. Enochs and O.M.G. Jenda.
 \newblock Gorenstein injective and projective modules.
 \newblock{\em Math. Zeit}., 220:611- 633, 1995.

 \bibitem{orthogonality}
 E.E. Enochs, O.M.G. Jenda, J. Xu.
 \newblock Orthogonality in the category of complexes.
 \newblock{\em Math. J. Okayama Univ}., 38:25- 46, 1996.

\bibitem{RIP}
S. Estrada, A. Iacob, M. Perez
\newblock Model structures and relative Gorenstein flat
 modules and chain complexes.
 \newblock{\em Contemporary Math}, Volume 751, 135– 176,
 2020.

\bibitem{RHA}
E.E. Enochs and O.M.G. Jenda.
\newblock Relative Homological Algebra.
\newblock{\em Walter de
 Gruyter}, 2000. De Gruyter Exposition in Math.

 \bibitem{eklof}
 P. Eklof.
 \newblock Homological algebra and set theory.
 \newblock{\em Trans. American Math. Soc.},
 (227):207-225,1977.

\bibitem{dinginj}
J. Gillespie.
\newblock On Ding injective, Ding projective and Ding flat modules and
 complexes.
 \newblock{\em Rocky Mtn. J. Math.}, 47(8), 2641=-2673, 2017.

 \bibitem{gao}
 Z. Gao and F. Wang.
 \newblock Coherent rings and Gorenstein FP-injective modules.
 \newblock{\em Com. Algebra}, 40: 1669-1679, 2012.

 \bibitem{gil17}
 J. Gillespie.
 \newblock Gorenstein complexes and recollements from cotorsion pairs.
 \newblock{\em Ad
vances in Math.}, 291: 859- 911, 2016.

 \bibitem{gil18}
 J. Gillespie.
 \newblock Model structures on modules over Ding-Chen rings.
 \newblock{\em Homology,
 Homotopy Appl.}, 12(1): 61-73, 2010.

\bibitem{gil19}
J. Gillespie.
\newblock On Ding injective, Ding projective and Ding at modules and com
plexes.
\newblock{\em Rocky Mountain J. Math.}, 47: 2641-2673, 2017.

 \bibitem{gil.flatmodel}
 J. Gillespie.
 \newblock The flat model structure on Ch(R).
 \newblock{\em Trans. Amer. Math. Soc.}, 356:
 33693- 3390, 2004.

 \bibitem{gil-iacob}
 J. Gillespie, A. Iacob.
 \newblock Duality pairs, generalized Gorenstein modules and Ding
 injective envelopes.
 \newblock{\em Comptes Rendus. Mathmatique}, 360: 381-398, 2022.

 \bibitem{harada}
 M. Harada.
 \newblock Hereditary semi-primary rings and triangular matrix rings.
 \newblock{\em Nagoya
 J. Math.},27(2) :463-484, 1966.

 \bibitem{holgordim}
 H. Holm.
 \newblock Gorenstein homological dimensions.
 \newblock{\em J. Pure and Appl. Alg.}, 189:167
193, 2004.

 \bibitem{cotpairs}
 H.Holm and P. Jørgensen
 \newblock Cotorsion pairs induced by duality pairs.
 \newblock{\em J. Commut.
 Algebra } 1(4): 621-633, 2009.

 \bibitem{purity}
 H.Holm and P. Jørgensen.
 \newblock Covers, preenvelopes, and purity.
 \newblock{\em Illinois Journal
 of Mathematics}, 52(2): 691703, 2008.

 \bibitem{gengor}
 A. Iacob.
 \newblock Generalized Gorenstein modules.
 \newblock{\em Algebra Colloq.}, 29(4): 651-662,
 2022.
 \bibitem{amuc25}
 A. Iacob.
 \newblock Gorenstein injective modules and Enochs conjecture.
 \newblock{\em Acta Mathe
matica Universitatis Comenianae}, 93(4), 197–204, 2025.

 \bibitem{directlimits}
 A. Iacob
 \newblock Direct limits of Gorenstein injective modules.
 \newblock{\em Osaka J. Math.}, to
 appear.
 \bibitem{weakly23}
 A. Iacob.
 \newblock Weakly Ding injective modules and complexes.
 \newblock{\em Com. Algebra}, 51(12),
 4899-4912 , 2023.

 \bibitem{weakly25}
 A. Iacob.
 \newblock Weakly Ding injective preenvelopes and covers.
 \newblock{\em Com. Algebra}, to
 appear, Published online: 17 Apr 2025.


\bibitem{GorFPinj.dim}
J.S. Hu, Y.X. Geng, Z.W. Xie, D.D. Zhang.
\newblock Gorenstein FP-injective dimension
 for complexes.
 \newblock{\em Com. Algebra}, 43(8): 3515-3533, 2015.

 \bibitem{kathy}
 K. Pinzon.
 \newblock Absolutely pure modules. PhD Dissertation.

 \bibitem{stovicek}
 J. Stovicek.
 \newblock On purity and coderived and singularity categories. preprint, available at [arXiv:1412.1615]

 \bibitem{lam}
 T. Y. Lam.
 \newblock Lectures on Modules and Rings. Springer Verlag, 1999.

 \bibitem{murfet}
 D. Murfet and S. Salarian,
 \newblock Totally acyclic complexes over noetherian
 schemes.
 \newblock{\em Adv. Math.} 226 (2011), 1096–1133.

 \bibitem{jrgr}
 J.R. Garc´ ıa Rozas.
 \newblock Covers and evelopes in the category of complexes of modules.
 CRC Press LLC, 1999.

 \bibitem{saroch-st}
 Jan Saroch and Jan Stovcek.
 \newblock Singular compactness and definability for cotor
sion and Gorenstein modules,
\newblock{\em Selecta Math.} (N.S.) 26, 2020.

 \bibitem{saroch}
 J. Saroch and J. Stovicek.
 \newblock The countable Telescope Conjecture for module
 categories.
 \newblock{\em Advances in Math.}, 219(3): 10021036, 2008.

 \bibitem{trlifaj}
 J. Trlifaj.
 \newblock Approximations of modules. Lecture notes for NMAG 31, available
 online at www.karlin.m .cuni.cz/ trlifaj.
 \bibitem{Yang}
 G. Yang and Z. Liu and L.Liang.
 \newblock On Gorenstein at preenvelopes of complexes.
 \newblock{\em
 Rend. Sem. Mat. Univ. Padova}, 129: 171-187, 2013.

 \bibitem{wang.liu}
 Z. Wang, Z. Liu
 \newblock FP-injective complexes and FP-injective dimension of com
 plexes.
 \newblock{\em Aust. Math. Soc.} , 91: 163-187, 2011.

 \bibitem{yang}
 G. Yang, Z.K. Liu, L. Liang.
 \newblock Ding Projective and Ding Injective Modules.
 \newblock{\em Alg.
 Colloquium 20(4)} , 189(1): 601-612, 2013.

 \bibitem{yang.liu11}
 G. Yang and Z. Liu.
 \newblock Strongly cotorsion (torsion-free) modules and cotorsion
 pairs.
 \newblock{\em Proc. Edinburgh Math. Soc.}, 52: 783797, 2011.

 \bibitem{YLL}
 G. Yang, Z.K. Liu, L. Liang.
 \newblock Model Structures on Categories of Complexes
 Over Ding-Chen Rings.
 \newblock{\em Com. Alg.}, 41: 50-69, 2013.

\bibitem{zebg.chen}
Y. Zeng, J. Chen.
\newblock On Gorenstein FP-injective modules.
\newblock{\em Journal of Southeast
 University}, 27(1): 115-118, 2011

 \bibitem{yang2012}
 G. Yang and K. Z. Liu.
 \newblock Gorenstein flat covers over GF-closed rings.
 \newblock{\em Comm.
 Algebra}, 40:1632–1640, 2012.










\end{thebibliography}

\vspace{11mm}

\textbf{Author}:\\

Alina Iacob, Georgia Southern University, USA, aiacob@georgiasouthern.edu\\

\end{document}